\def\R {\mathbb{R}}
\def\C {\mathcal{C}}
\def\N {\mathbb{N}}
\def\S {\mathbb{S}}
\def\eps{\varepsilon}
\def\dist{{\rm dist}}
\newcommand{\loc}{\mathrm{loc}}
\newcommand{\pa}{\partial}
\newtheorem{proposition}{Proposition}[section]
\newtheorem{theorem}[proposition]{Theorem}
\newtheorem*{theorem*}{Theorem}
\newtheorem{corollary}[proposition]{Corollary}
\newtheorem{lemma}[proposition]{Lemma}
\theoremstyle{definition}
\newtheorem{definition}[proposition]{Definition}
\newtheorem{remark}[proposition]{Remark}
\numberwithin{equation}{section}
\renewcommand{\le}{\leqslant}
\renewcommand{\ge}{\geqslant}
\begin{document}

\title[Fractional elliptic equations in Lipschitz epigraphs]{On fractional elliptic equations \\
in Lipschitz sets and epigraphs: \\ regularity, monotonicity and rigidity results}

\author[S. Dipierro]{Serena Dipierro}
\address[Serena Dipierro]{School of Mathematics and Statistics,
University of Melbourne, 813 Swanston St, Parkville VIC 3010, Australia,
and
School of Mathematics and Statistics,
University of Western Australia,
35 Stirling Highway
Crawley, Perth
WA 6009,
Australia}
\email{sdipierro@unimelb.edu.au}

\author[N. Soave]{Nicola Soave}
\address[Nicola Soave]{Mathematisches Institut,
Justus-Liebig-Universit\"at Giessen, 
Arndtstrasse 2, 35392 Giessen, Germany}
\email{nicola.soave@gmail.com, nicola.soave@math.uni-giessen.de}

\author[E. Valdinoci]{Enrico Valdinoci}
\address[Enrico Valdinoci]{School of Mathematics and Statistics,
University of Melbourne, 813 Swanston St, Parkville VIC 3010, Australia,
Weierstra{\ss}-Institut f\"ur Angewandte
Analysis und Stochastik, Mohrenstra{\ss}e 39, 10117 Berlin, Germany, 
Dipartimento di Matematica, Universit\`a degli studi di Milano,
Via Saldini 50, 20133 Milan, Italy, and
School of Mathematics and Statistics,
University of Western Australia,
35 Stirling Highway
Crawley, Perth
WA 6009,
Australia.}
\email{enrico@mat.uniroma3.it}

\begin{abstract}
We consider a nonlocal equation set in an unbounded domain
with the epigraph property. We prove symmetry, monotonicity and rigidity results.
In particular, we deal with halfspaces, coercive epigraphs and epigraphs that
are flat at infinity. 

These results can be seen as the nonlocal counterpart of the celebrated
article~\cite{BCNCPAM}.
\end{abstract}

\maketitle

\section{introduction}

The study of monotonicity and rigidity of solutions to semilinear elliptic equations of fractional order in the whole space $\R^N$ 
or in smooth bounded sets $\Omega$ has attracted considerable attention in the last years, 
see e.g. \cite{BaScMo, CaCi, CaCi2, CaSi, CaSi2, ChenLiLi, DDW, DipMonPerSci, Faz, FeWa, SiVa}. 
In striking contrast, if $\Omega$ is unbounded, but different from the whole space, 
very few results are available, all concerning the particular case of the half-space $\Omega=\R^N_+$, see \cite{FaJa, QuaXia}, 
or the one of exterior sets, see \cite{LiLi, JarNod, SoVa}. 
The main purpose of this paper is the study of the qualitative properties of bounded solutions to 
\begin{equation}\label{problem}
\begin{cases}
(-\Delta)^s u = f(u) & \text{in $\Omega$}, \\
u>0 & \text{in $\Omega$}, \\
u=0 & \text{in $\R^N \setminus \Omega$},
\end{cases} 
\end{equation}
where~$\Omega$ is assumed to be
\emph{the epigraph of a continuous function $\varphi:\R^{N-1} \to \R$}, i.e.
we suppose that
\begin{equation}\label{OMEGA}
\Omega:= \left\{ x \in \R^N: x_N > \varphi(x')\right\}\,, \qquad {\mbox{ with }}x' = (x_1,\dots,  x_{N-1}) \in \R^{N-1}.
\end{equation}
Notice that the half-space $\R^N_+$ falls within this definition with $\varphi \equiv 0$. 

In \eqref{problem}, $(-\Delta)^s$ with $s \in (0,1)$ denotes the fractional Laplacian, which can be defined as the operator acting on sufficiently smooth functions as
\[
\begin{split} 
(-\Delta)^s u(x) & := c_{N,s}\,PV\int_{\R^N} \frac{u(x)-u(y)}{|x-y|^{N+2s}}\,dy \\
& = c_{N,s} \lim_{\eps \to 0^+} \int_{\R^N \setminus B_\eps(x)} \frac{u(x)-u(y)}{|x-y|^{N+2s}}\,dy,
\end{split}
\]
where~$c_{N,s}>0$ is a normalizing constant (which plays no major role in the present
paper and which will be often omitted for simplicity), and $PV$ stays for ``principal value".

We will consider different assumptions on $\varphi$, obtaining different monotonicity 
and rigidity properties accordingly. 

The nonlinearity~$f$ in~\eqref{problem} belongs to a reasonably wide class of functions, including for instance those of bistable-type (a precise definition
will follow shortly). Under these assumptions,
the main results of this paper are:
\begin{itemize}
\item boundary regularity, monotonicity and further qualitative properties for 
solutions to~\eqref{problem} in globally Lipschitz epigraphs;
\item monotonicity for 
solutions to~\eqref{problem} in
coercive epigraphs;
\item $1$-dimensional symmetry in the half-space;
\item rigidity for overdetermined problems 
in epigraphs that are sufficiently ``flat at infinity". 
\end{itemize}
Similar results in the classical case~$s=1$ were obtained in the
seminal paper~\cite{BCNCPAM}. Here, dealing with a nonlocal framework, a careful analysis is needed to overcome
the lack of explicit barriers and several ad-hoc arguments will be exploited
to replace the study of the point-wise behavior of the solution with
a global study of the geometry of the problem.\medskip

As 
additional statements, we also derive a very general maximum principle 
(tailor-made for non-decaying solutions in possibly unbounded domains), a general version
of the sliding method for the fractional Laplacian, and a boundary regularity result 
for solutions of fractional boundary value problems in sets satisfying an 
exterior cone condition. \medskip

Before proceeding with the statement of our results, we clarify that with the terminology
\emph{solution} in this paper we always mean \emph{classical solution}.

As a matter of fact, without extra effort, the
same results would apply to bounded
\emph{viscosity solutions} of~\eqref{problem}: indeed, since we will assume that
the nonlinearity~$f$ is locally Lipschitz continuous, the regularity theory for viscosity solutions
(developed in \cite{CafSilCPAM, CafSilAnn}) implies that viscosity and classical solutions coincide in our setting
(see~\cite[Remark 2.3]{QuaXia} for a detailed explanation). 

In addition, we mention that~\emph{distributional (i.e. very weak)
solutions} or \emph{weak solutions} (as defined e.g. in \cite{FaJa, SoVa}) could be
considered as well with minor changes. 

For the reader's convenience, we will recall the definition of classical and viscosity solution at the end of the introduction.

In the next subsections, we describe in details the results obtained. In all the forthcoming statements, the fractional parameter $s$ will always be a fixed value in the interval $(0,1)$.

\subsection{Boundary value problems in globally Lipschitz epigraphs}

In this subsection we consider the case in which
the domain~$\Omega$ 
of~\eqref{problem} is a \emph{globally Lipschitz epigraph}. Namely,
we suppose that the function $\varphi$ in~\eqref{OMEGA}
is globally Lipschitz continuous, with Lipschitz constant $K$.

On the nonlinearity $f$, we suppose that:
\begin{itemize}
\item[($f1$)] $f$ is locally Lipschitz continuous in $\R$, and there exists $\mu>0$ such that 
$f(t) >0$ for any~$t\in (0,\mu)$, and $f(t) \le 0$ for any~$t \ge \mu$;
\item[($f2$)] there exist ${t_{0}} \in (0,\mu)$, and $\delta_0>0$ such that $f(
t) \ge \delta_0 \,t$ for any $t\in[0,{t_{0}}]$;
\item[($f3$)] there exists ${t_{1}} \in ({t_{0}},\mu)$ such that $f$ is non-increasing in 
$({t_{1}},\mu)$.
\end{itemize} 
As prototype example, we may think at $f(t) =t-t^3$, which yields the fractional 
Allen-Cahn equation, that is a widely studied model in phase transitions
in media with long-range particle interactions, see e.g.~\cite{SavValGamma}. \medskip

The first of our main results is the natural counterpart of Theorems 1.1 and 1.2 in \cite{BCNCPAM}. 

\begin{theorem}\label{thm: main 1}
Let $\Omega$ be a globally Lipschitz epigraph. Let $f$ satisfy assumptions ($f1$)-($f3$), and let $u$ be a bounded solution to \eqref{problem}. Then:
\begin{itemize}
\item[($i$)] $u <\mu$ in $\Omega$.
\item[($ii$)] As $\dist(x,\pa \Omega) \to +\infty$, we have that~$u(x) \to \mu$ uniformly in $\Omega$.
\item[($iii$)] There exist $C,\bar \rho,h_1>0$ such that 
\[
u(x',x_N) \ge C(x_N-\varphi(x'))^{\bar \rho} \qquad  \text{if $x_N-\varphi(x') < h_1$}.
\]
\item[($iv$)] $u$ is globally $\alpha$-H\"older continuous in $\R^N$, for some $\alpha \in (0,s)$.
\item[($v$)] $u$ is the unique bounded solution to \eqref{problem}.
\item[($vi$)] If $(a_1,\dots,a_{N-1})$ is such that $$\sum_i a_i^2 < K^{-2},$$ then  
\[
\pa_{x_N} u + \sum_i a_i \pa_{x_i} u >0 \qquad \text{in $\Omega$}.
\]
In particular, $u$ is monotone increasing in $x_N$.
\end{itemize}
\end{theorem}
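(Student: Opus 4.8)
The plan is to attack the six assertions in a logically cascading order, since each one feeds the next, and to exploit the sliding method together with the maximum principle for unbounded domains that the paper has already announced. First I would prove $(i)$: the bound $u<\mu$ in $\Omega$. The idea is to argue by contradiction via a comparison with the constant $\mu$. If $\sup_\Omega u \geq \mu$, one slides a suitable translate of the graph of $u$ (or compares $u$ with $\mu$ directly using $(f1)$, which gives $f(t)\le 0$ for $t\ge\mu$) and applies the general maximum principle to the function $u-\mu$, which is $s$-subharmonic where $u\ge\mu$; the exterior datum $u=0$ on $\R^N\setminus\Omega$ plus the sign of $f$ forces $u\le\mu$, and the strong maximum principle upgrades this to the strict inequality. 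Once $(i)$ holds, I would prove $(ii)$, the uniform convergence $u(x)\to\mu$ as $\dist(x,\partial\Omega)\to\infty$: here the Lipschitz hypothesis on $\varphi$ is essential, because it guarantees that balls of a fixed radius $R$ fit inside $\Omega$ at points far from the boundary, with $R\to\infty$ as the distance grows. On such balls one uses a subsolution built from the principal eigenfunction of $(-\Delta)^s$ in $B_R$ together with $(f2)$ (the linear lower bound $f(t)\ge\delta_0 t$ near $0$) to force $u$ to be bounded below by some positive constant depending only on the inradius; a continuity/ODE-type argument using $(f1)$ and $(f3)$ (bistable structure) then pushes this lower bound up to $\mu$ as $R\to\infty$, uniformly, exactly as in \cite{BCNCPAM}.

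Assertion $(iii)$, the power-type lower bound near the boundary, follows from the exterior cone condition: a Lipschitz epigraph satisfies a uniform exterior cone condition at every boundary point, so the boundary regularity result for the exterior cone condition mentioned in the introduction provides a barrier of the form $C\,\dist(x,\partial\Omega)^{\bar\rho}$ from below, with $\bar\rho$ depending only on $N,s$ and the Lipschitz constant $K$. The key point is that $(ii)$ gives a uniform positive lower bound for $u$ on the set $\{\dist(x,\partial\Omega)=h_1\}$ for a suitable $h_1$, and then the barrier is scaled to sit below $u$ there; the maximum principle propagates the inequality into the thin strip. For $(iv)$, the global $\alpha$-Hölder regularity, one combines the interior regularity theory for $(-\Delta)^s u = f(u)$ with $f(u)$ bounded (from $(i)$ and boundedness of $u$) — giving $C^{2s+\beta}_{\loc}$ or at least $C^\alpha_{\loc}$ interior estimates — with the boundary regularity in sets satisfying an exterior cone condition, which yields $C^\alpha$ up to $\partial\Omega$ for some $\alpha\in(0,s)$ depending on the cone opening; since $u\equiv 0$ outside $\Omega$ and $u$ is continuous, these patch to a global modulus of continuity.

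The last two items are where the real work lies, and $(vi)$ is the main obstacle. For $(vi)$ I would run the sliding method in the direction $e = (a_1,\dots,a_{N-1},1)$: because $\sum_i a_i^2 < K^{-2}$, the vector $e$ points strictly into $\Omega$ at every boundary point (this is a purely geometric computation using the Lipschitz bound on $\varphi$), so translating $\Omega$ in the $-e$ direction produces a set containing $\Omega$, and on the overlap one compares $u$ with its translate $u_\tau(x) := u(x+\tau e)$. One shows the set of $\tau>0$ for which $u_\tau \geq u$ in $\Omega$ is nonempty (using $(ii)$: for $\tau$ large the translate is close to $\mu$ where $u$ might still be small near $\partial\Omega$, combined with the boundary behavior from $(iii)$ to control the thin region), open, and closed; the closedness/opening step is exactly where the general fractional sliding method and the maximum principle for non-decaying solutions in unbounded domains are invoked, together with $(f3)$ (monotonicity of $f$ near $\mu$) and $(f2)$ to handle, respectively, the region where $u$ is close to $\mu$ and the region where $u$ is close to $0$ — the two "good sign" regimes — while the intermediate region is compact modulo translations and handled by the strong maximum principle. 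Letting $\tau\downarrow 0$ gives $u_\tau\ge u$ for all small $\tau>0$, hence $\partial_e u \ge 0$, and the strong maximum principle applied to the nonnegative $s$-supersolution $\partial_e u$ (which solves the linearized equation $(-\Delta)^s(\partial_e u) = f'(u)\,\partial_e u$ in $\Omega$, using $(iv)$ and interior regularity to justify differentiation) yields the strict inequality $\partial_e u > 0$ in $\Omega$; taking $a_i=0$ gives monotonicity in $x_N$. Finally $(v)$, uniqueness, follows from a standard sliding/comparison argument between two bounded solutions $u_1,u_2$: using $(i)$, $(ii)$ and $(iii)$ for both, one slides $u_1$ against $u_2$ in the $e_N$ direction to get $u_1\ge u_2$, then reverses the roles. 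I expect the delicate points to be (a) verifying the geometric cone/entering-direction claims uniformly in terms of $K$, and (b) making the three-region sliding argument rigorous in the nonlocal setting without explicit barriers — precisely the ad-hoc global-geometry substitute for pointwise barrier analysis that the introduction flags.
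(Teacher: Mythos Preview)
Your overall architecture is right and matches the paper's: $(i)$ via comparison with the constant $\mu$ and the unbounded-domain maximum principle (Theorem~\ref{thm: super max}), $(ii)$ via an eigenfunction subsolution plus a push-to-$\mu$ argument, $(iv)$ as a direct consequence of Theorem~\ref{thm: boundary regularity}, and $(v)$--$(vi)$ by sliding. The paper in fact proves $(v)$ and $(vi)$ in one stroke (compare $u_\tau$ with a second solution $v$, then specialise $v:=u$), splitting $\Omega$ into only two regions $\Omega^A=\{\dist(\cdot,\partial\Omega)>A\}$ (where $u,u_\tau>t_1$ so $(f3)$ makes the linearised coefficient nonpositive and Theorem~\ref{thm: super max} applies) and the strip $\Omega_A$ (handled by a translation-compactness argument using Lemma~\ref{lem: 3.4 BCN}, not $(f2)$ or $(iii)$). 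Your three-region sketch would also work in principle, but note that $(iii)$ is not used to \emph{start} the sliding; what is needed there is the fact that $u$ is bounded away from $\mu$ on bounded strips (Lemma~\ref{lem: 3.4 BCN}), not a lower bound.

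There is, however, a genuine gap in your plan for $(iii)$. You invoke the \emph{exterior} cone condition and the boundary regularity result of the introduction, but Theorem~\ref{thm: boundary regularity} (and its Proposition~\ref{prop: 4.1 in BCN}) yields an \emph{upper} bound $|u(x)|\le C\,\dist(x,\partial\Omega)^\alpha$, never a lower one. A lower bound of the form $u\ge C\,\dist(\cdot,\partial\Omega)^{\bar\rho}$ requires the \emph{interior} cone, and a barrier-from-below argument of the type you describe runs into a nonlocal obstruction: the natural candidate (the homogeneous $s$-harmonic function in the interior cone) grows like $|x|^{\bar\rho}$ at infinity and therefore cannot sit below the bounded function $u$ on the complement of any bounded set. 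The paper flags exactly this difficulty and replaces the barrier approach entirely: it fixes an interior cone $\Sigma_\beta\subset\Omega$, builds an auxiliary $s$-harmonic function $v$ on a \emph{bounded} truncation $D\subset\Sigma_\beta$ with controlled data (so that $v\le u$ by comparison), and then runs a Harnack chain along a geometric sequence of balls $B_{r_k}(x_k)\subset D$ from height $\sim h_1$ down to the boundary point, picking up a factor $\tilde C$ at each of the $k_0\sim -\log\dist(x_0,\partial\Omega)$ steps. This produces $u(x_0)\ge v(x_0)\ge \bar C\,\tilde C^{-k_0}=C\,\dist(x_0,\partial\Omega)^{\bar\rho}$ with $\bar\rho$ determined by the interior cone opening. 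Your proposal for $(iii)$ would need to be replaced by this Harnack-iteration mechanism (or an equivalent device that avoids global barriers).
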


We stress that, since $\Omega$ is merely a Lipschitz set and the exterior sphere condition is not necessarily satisfied along $\pa \Omega$, the H\"older continuity of the solution does not follow by previous contributions (see Subsection \ref{au45678IUHAUaJ13} for more details). Both the exponents $\bar \rho$ and $\alpha$ appearing in the theorem are determined by the choice of $\varphi$. To be more precise, we note that a globally Lipschitz epigraph satisfy both a uniform exterior cone condition with angle $\theta_1$ and a uniform interior cone condition with angle $\theta_2$ (see Subsection \ref{au45678IUHAUaJ13} for a precise definition of the exterior cone condition); then, as it will be clear from the proofs, the exponent $\alpha$ depends on $\theta_1$, while the index $\bar \rho$ depends on $\theta_2$.

Regarding point ($vi$) in the theorem, it establishes that $u$ is monotone increasing in any direction $\tau$ such that there exists an orthonormal basis $y_1,\dots,y_N$ of $\R^N$ with $y_N= \tau$, and in the new coordinates $\Omega$ is still the epigraph of a globally Lipschitz function $\psi= \psi(y_1,\dots,y_{N-1})$. Thus, in the particular case $\varphi \equiv 0$, i.e. when $\Omega=\R^N_+$ is a half-space, we deduce monotonicity and $1$-dimensional symmetry of the solutions.

\begin{corollary}\label{cor: half-space}
Let $\Omega=\R^N_+$, and let $f$ satisfy ($f1$)-($f3$), and let $u$ be a
bounded solution to \eqref{problem}. Then $u$ depends only on $x_N$, and
\[
\pa_{x_N} u>0 \qquad \text{in $\Omega$}.
\]
\end{corollary}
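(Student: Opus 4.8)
The plan is to deduce Corollary \ref{cor: half-space} directly from Theorem \ref{thm: main 1}(vi), exploiting the rotational symmetry of the half-space. First I would observe that when $\varphi \equiv 0$ the Lipschitz constant $K$ can be taken arbitrarily small (indeed $K = 0$), so the constraint $\sum_i a_i^2 < K^{-2}$ in part (vi) becomes vacuous: it holds for \emph{every} choice of $(a_1,\dots,a_{N-1}) \in \R^{N-1}$. Hence, for all such tuples,
\[
\pa_{x_N} u + \sum_{i=1}^{N-1} a_i \pa_{x_i} u > 0 \qquad \text{in } \R^N_+.
\]

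Next I would fix an arbitrary index $j \in \{1,\dots,N-1\}$ and a parameter $t > 0$, and apply the displayed inequality twice: once with $a_j = t$ and all other $a_i = 0$, and once with $a_j = -t$ and all other $a_i = 0$. This gives $\pa_{x_N} u + t\, \pa_{x_j} u > 0$ and $\pa_{x_N} u - t\, \pa_{x_j} u > 0$ throughout $\R^N_+$. Adding and dividing by $2$ yields $\pa_{x_N} u > 0$ in $\R^N_+$, which is the asserted strict monotonicity in $x_N$. Subtracting the two inequalities gives $|\pa_{x_j} u| < \frac{1}{t}\, \pa_{x_N} u$ pointwise in $\R^N_+$; since $u$ is a classical (hence differentiable) solution and $\pa_{x_N} u(x)$ is a fixed finite number at each point, letting $t \to +\infty$ forces $\pa_{x_j} u \equiv 0$ in $\R^N_+$ for every $j = 1,\dots,N-1$. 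Therefore $u$ does not depend on $x'$, i.e. $u = u(x_N)$, and combined with $\pa_{x_N} u > 0$ this completes the proof.

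Alternatively, and perhaps more in the spirit of the remark following Theorem \ref{thm: main 1}, one could argue by the invariance of the problem under rotations fixing the $x_N$-axis together with orthogonal changes of coordinates: for any unit vector $\tau$ with $\tau \cdot e_N > 0$ one may choose an orthonormal basis $y_1,\dots,y_N$ with $y_N = \tau$ in which $\R^N_+$ is again a (globally Lipschitz, in fact flat) epigraph, so part (vi) gives $\pa_\tau u > 0$; applying this to $\tau$ and reflecting shows $\pa_\tau u = 0$ whenever $\tau \perp e_N$. Either route is essentially immediate once Theorem \ref{thm: main 1} is in hand. I do not anticipate a genuine obstacle here: the only point requiring a small care is the passage to the limit $t \to +\infty$ (equivalently, the observation that the monotonicity cone in part (vi) degenerates to the entire open half-space of directions when $K = 0$), which relies on the pointwise differentiability built into the notion of classical solution and on the strictness of the inequality in (vi) holding at every point of $\Omega$.
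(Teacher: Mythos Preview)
Your proposal is correct and follows essentially the same approach as the paper: both deduce the corollary directly from Theorem~\ref{thm: main 1}($vi$), the paper simply remarking (in the paragraph immediately after the theorem) that when $\varphi\equiv 0$ the half-space is a globally Lipschitz epigraph in any rotated frame with $y_N\cdot e_N>0$, so ($vi$) yields monotonicity in every such direction and hence $1$-dimensional symmetry. Your explicit limiting argument with $t\to+\infty$ is just a concrete way of unpacking this, and your alternative via rotations is exactly the paper's one-line justification.
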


Previous results regarding monotonicity of solutions to nonlocal equations in 
half-spaces 
can be found in \cite{FaWe, QuaXia} (see also~\cite{SavValMono}
for results in the whole of~$\R^N$), where the authors dealt with non-decreasing nonlinearities~$f$ satisfying $f(0) \ge 0$. 

We also refer to \cite{BaDPGMQu} (which appeared after the present paper was submitted), where the authors proved that any bounded, non-negative and non-trivial solution to \eqref{problem} with $f$ of class $\C^1$ is montone increasing. 

In all the articles~\cite{BaDPGMQu,FaWe,QuaXia} the monotonicity is used to derive non-existence results for non-decreasing nonlinearities~$f$, which is a complementary situation 
with respect to the one considered here. 

The $1$-dimensional symmetry in the half-space,
as addressed in Corollary~\ref{cor: half-space}, was, 
up to now, open. 

Coming back to the monotonicity of the solutions, we emphasize that the main result in \cite{BaDPGMQu} allows to treat also the case $f(0)<0$. This marks a relevant difference with the local setting~$s=1$, since in case $f(0)<0$ non-negative solutions of local equations are not necessarily monotone, and only partial results are available (we refer the interested reader to 
\cite{BCNpisa, Cort, Da, FaSc, FaSo}).

%

The proof of Theorem \ref{thm: main 1} is given in Section \ref{sec: thm 1},
and relies on some classical ideas of~\cite{BCNCPAM} -- nevertheless,
all the intermediate steps present several substantial difficulties 
of purely nonlocal nature. As a matter of fact,
in \cite{BCNCPAM} the authors often construct more or less 
explicit local barriers, and exploit local properties of functions whose Laplacian 
has a strict sign. On the other hand, the construction of a barrier 
function is much harder when dealing with integro-differential 
operators, since such barrier has to be defined in the all space $\R^N$, and 
has to satisfy a boundary condition on the complement of a certain set $D$ (and 
not only on $\partial D$). Moreover, local properties of functions 
cannot be inferred by the only knowledge of the fractional Laplacian 
in some neighbourhood and any modification of the function ``far away''
affects the values
of its fractional Laplacian at a point. These are just two sources of new obstructions 
which we shall overcome; we refer to the comments and the remarks 
written throughout the paper for further details.

\subsection{Monotonicity of solutions in coercive epigraphs}

In this subsection, we deal with
the case in which the domain~$\Omega$ of~\eqref{problem}
is a \emph{coercive epigraph}, namely, we suppose that the function~$\varphi: \R^{N-1} \to \R$
in~\eqref{OMEGA}
is continuous and satisfies
\[
\lim_{|x|' \to +\infty} \varphi(x') = +\infty.
\]
In this setting, we have the following result:
\begin{theorem}\label{thm: main 2}
Let $\Omega$ be a coercive epigraph, and let $u$ be a solution (not necessarily bounded) to\begin{equation*}
\begin{cases}
(-\Delta)^s u = f(x,u) & \text{in $\Omega$}, \\
u > 0 & \text{in $\Omega$}, \\
u=0 & \text{in $\R^N \setminus \Omega$},
\end{cases}
\end{equation*}
with $f(x,t)$ continuous in $\overline{\Omega} \times \R$, non-decreasing in $x_N$, 
and locally Lipschitz continuous in $t$, locally uniformly in $x$, in the following sense: for any $M>0$ and 
any compact set~$K \subset \overline{\Omega}$, there exists $C>0$ such that
\[
\sup_{x \in K} \frac{|f(x,t)-f(x,\tau)|}{|t-\tau|} \le C \qquad {\mbox{ for any }} t, \tau \in [-M,M].
\]
Then $u$ is monotone increasing in $x_N$.
\end{theorem}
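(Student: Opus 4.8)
The plan is to prove monotonicity in the $x_N$-direction via the \emph{sliding method}, exploiting crucially the coercivity of $\varphi$ to compensate for the lack of decay of $u$ at infinity. For $\lambda>0$, set $u^\lambda(x):=u(x',x_N+\lambda)$, which solves $(-\Delta)^s u^\lambda = f(x',x_N+\lambda,u^\lambda)$ in the shifted domain $\Omega^\lambda:=\{x: (x',x_N+\lambda)\in\Omega\}=\{x_N>\varphi(x')-\lambda\}\supset\Omega$. The goal is to show $u^\lambda\ge u$ in $\Omega$ for every $\lambda>0$; differentiating in $\lambda$ at $\lambda=0^+$ and then invoking a strong maximum principle yields the strict inequality $\pa_{x_N}u>0$. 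The comparison $u^\lambda\ge u$ will follow because: on $\R^N\setminus\Omega$ we have $u=0\le u^\lambda$ (since $u^\lambda\ge0$ everywhere, being $\ge0$ on $\Omega^\lambda\supset\Omega$ and $=0$ outside $\Omega^\lambda$), so $w^\lambda:=u^\lambda-u$ is a candidate for a maximum-principle argument on $\Omega$; and where $w^\lambda<0$ one has $u^\lambda<u\le M$ for the relevant bound, so the locally Lipschitz hypothesis on $f$ linearizes the equation, while the monotonicity of $f$ in $x_N$ gives $f(x',x_N+\lambda,u^\lambda)\ge f(x',x_N,u^\lambda)$, so that $w^\lambda$ is a subsolution of a linear equation with a bounded zeroth-order coefficient on the (open) set $\{w^\lambda<0\}$.

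The key structural step is to rule out that the negativity set $Z^\lambda:=\{x\in\Omega: w^\lambda(x)<0\}$ is nonempty. Here coercivity enters decisively: I would first argue that $\inf_{Z^\lambda}w^\lambda$ is not attained "at infinity" in a bad way, using that $\Omega^\lambda\setminus\Omega=\{\varphi(x')-\lambda<x_N\le\varphi(x')\}$ is a set that, by $\varphi(x')\to+\infty$ as $|x'|\to\infty$, recedes to infinity; more precisely, for any $R>0$ the portion of $\Omega$ with $x_N<R$ is contained in $\{|x'|\le C_R\}$, so $\Omega$ looks, near any fixed height, like a bounded-in-$x'$ region. The strategy is the standard two-step sliding argument: (a) show $w^\lambda\ge0$ in $\Omega$ for $\lambda$ large — this uses that for $\lambda$ large the "overlap" $\Omega\cap\Omega^{-\lambda}$-type region where a negative minimum could sit is pushed out, combined with the general maximum principle of the paper tailored to non-decaying solutions in unbounded domains, applied on $Z^\lambda$ (whose complement in $\R^N$ includes a large region where $w^\lambda\ge0$, enough to run the nonlocal maximum principle); (b) let $\lambda_*:=\inf\{\lambda>0: w^{\mu}\ge0 \text{ in }\Omega \ \forall \mu\ge\lambda\}$ and show $\lambda_*=0$ by contradiction: if $\lambda_*>0$, then $w^{\lambda_*}\ge0$ with, by the strong maximum principle, $w^{\lambda_*}>0$ in $\Omega$, and one derives a contradiction by showing the inequality $w^{\lambda_*-\eps}\ge0$ persists for small $\eps>0$. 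Step (b)'s persistence is where coercivity is used again: because the region of $\Omega$ below any fixed height is bounded in $x'$, a compactness/normal-families argument shows that if $w^{\lambda_*-\eps_k}$ went negative at points $x_k$, those points cannot escape to the lateral infinity (the domain is narrow there) nor to $x_N\to+\infty$ in a way that survives — the translated sequence $u(\cdot+\text{recentering})$ would converge to a limit solving a limiting problem on a limiting coercive-type or half-space-type domain, contradicting $w^{\lambda_*}>0$.

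The main obstacle, as in all nonlocal sliding arguments, is the global nature of $(-\Delta)^s$: the comparison principle cannot be localized, so to conclude $w^\lambda\ge0$ on $\Omega$ from information on $Z^\lambda$ I must use a genuinely nonlocal maximum principle valid without any sign or decay assumption at infinity — exactly the "very general maximum principle (tailor-made for non-decaying solutions in possibly unbounded domains)" announced in the introduction, together with the "general version of the sliding method for the fractional Laplacian." I would invoke those two results directly here. The delicate point in applying them is verifying their hypotheses on the set $Z^\lambda$: one needs that $Z^\lambda$ is, in a suitable nonlocal sense, "not too big" — e.g. that $\R^N\setminus Z^\lambda$ occupies enough mass near $Z^\lambda$ — and this is precisely guaranteed by coercivity of $\varphi$, since $\Omega^c$ (where $w^\lambda\ge 0$ trivially) contains, near any bounded height, everything outside a ball in $x'$. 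A secondary technical nuisance is that $u$ need not be bounded, so the linearization of $f$ and the boundedness of the zeroth-order coefficient must be done on sublevel sets and combined with a cutoff or exhaustion argument; but since the hypothesis on $f$ is phrased exactly as local Lipschitz continuity uniform on compacta, this is manageable and does not require new ideas beyond bookkeeping.
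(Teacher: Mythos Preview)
Your plan uses the \emph{sliding} method (comparing $u$ with its vertical translates $u^\lambda(x)=u(x',x_N+\lambda)$), whereas the paper uses the \emph{moving planes} method (comparing $u$ with its reflections $u_\lambda(x)=u(x',2\lambda-x_N)$). This is not a cosmetic difference; it is the crux of why the paper's argument closes and yours, as written, does not.

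The decisive observation in the paper is that for a coercive epigraph the cap $\Sigma_\lambda=\Omega\cap\{x_N<\lambda\}$ is \emph{bounded} for every $\lambda$. Hence one never needs a maximum principle in unbounded domains or any control of $u$ at infinity: the paper applies the Felmer--Wang maximum principle in sets of small measure on $\Sigma_\lambda$ (and on $\Sigma_\lambda^-$), together with the antisymmetry of $w_\lambda=u_\lambda-u$ under reflection, which is what allows the decomposition $w_\lambda=w_{1,\lambda}+w_{2,\lambda}$ to absorb the sign-changing tail of $w_\lambda$ outside the cap. None of this structure is available for translates: $w^\lambda=u^\lambda-u$ has no antisymmetry, and the negativity set $Z^\lambda\subset\Omega$ has no reason to be bounded, since any point of $\Omega$ (arbitrarily high in $x_N$) could in principle satisfy $u(x',x_N+\lambda)<u(x',x_N)$.

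This leads to two concrete gaps in your outline. First, the two auxiliary results you plan to invoke do not fit: Theorem~\ref{thm: super max} requires $c\le 0$ on $D$, and your linearized coefficient $c_\lambda(x)=\big(f(x,u^\lambda)-f(x,u)\big)/(u^\lambda-u)$ is only bounded, not nonpositive; Theorem~\ref{thm: sliding} is stated for \emph{bounded} $\Omega$ with the special boundary ordering~\eqref{hp phi sliding}, neither of which holds here. Second, your starting step (a)---$w^\lambda\ge 0$ for $\lambda$ large---is unsupported: since $\Omega$ is an epigraph, $\Omega^\lambda\supset\Omega$ for every $\lambda>0$, so the usual ``slide until the domains separate'' mechanism never kicks in, and with $u$ possibly unbounded there is no a priori reason that $u(\cdot+\lambda e_N)\ge u$ anywhere as $\lambda\to\infty$. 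The compactness/recentering sketch you offer for step (b) presupposes (a) and also tacitly needs a bounded negativity region to recentre around, which again you have not established. In short, coercivity is being used in the wrong place: it does not make $Z^\lambda$ small or the complement fat in the sense your tools require, but it does make the reflection caps $\Sigma_\lambda$ bounded---which is exactly what the paper exploits.
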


This result is the natural counterpart of~\cite[Theorem 1.3]{BCNCPAM}, 
which in turn is a refinement of \cite[Proposition II.1]{EstLio}. 
Its proof rests on the moving planes method for the fractional Laplacian.

\subsection{Overdetermined problems for the fractional Laplacian in epigraphs.} 

In this subsection, we consider the overdetermined setting in
which both Dirichlet and Neumann conditions are prescribed in problem~\eqref{problem}.
Differently from the classical case, the Dirichlet condition needs to be set
in the complement of the domain (and not along its boundary)
and the Neumann assumptions takes into account (in a suitable sense)
normal derivatives of fractional order.

For this,
given an open set $\Omega$ with~$\mathcal{C}^2$ boundary,
we denote by~$\nu(x_0)$ the inner unit normal vector
at~$x_0\in\partial\Omega$. For any~$u \in \C^{0,s}(\R^N)$ and~$x_0 \in \pa \Omega$,
we consider the \emph{outer normal $s$-derivative} of $u$ in $x_0$, defined as
\begin{equation}\label{def: s-derivative}
(\pa_{\nu})_s u(x_0) := -\lim_{t \to 0^+} \frac{u(x_0 + t \nu(x_0))-u(x_0)}{t^s}.
\end{equation}
The boundary regularity 
theory for fractional Laplacian, developed 
in~\cite{GrubbAdv, GrubbAnal, RosSerJMPA, RosSerDuke}, ensures that, for a solution $u$ to \eqref{problem} with $\Omega$ of class $\mathcal{C}^2$, the quantity $(\pa_\nu)_s u$ is well defined. Natural Hopf's Lemmas were then proved 
in~\cite[Proposition 3.3]{FaJa} and~\cite[Lemma 1.2]{GreSer}, and constituted the base point in the study of overdetermined problems for the fractional Laplacian, 
see~\cite{Dalibard, FaJa, GreSer, SoVa, LiLi}.

In this paper we consider overdetermined problems of the type
\begin{equation}\label{overdet}
\begin{cases}
(-\Delta)^s u = f(u) & \text{in $\Omega$}, \\
u> 0 & \text{in $\Omega$}, \\
u=0 & \text{in $\R^N \setminus \Omega$}, \\
(\pa_{\nu})_s u = const. & \text{on $\pa \Omega$}.
\end{cases}
\end{equation}
We will suppose that~$\Omega$ is the epigraph of a $\mathcal{C}^{2}$ and 
globally Lipschitz 
function $\varphi: \R^{N-1} \to \R$, satisfying the following additional assumption:
\begin{equation}\label{hp epigrafico}
\text{for any $\tau \in \R^{N-1}$, uniformly in $x'$, } \lim_{|x'| \to +\infty} (\varphi(x'+\tau) - \varphi(x')) = 0.
\end{equation}
This condition, firstly proposed in \cite{BCNCPAM}, can be seen
as a flatness condition of~$\pa \Omega$ at infinity. 

We can extend \cite[Theorem 7.1]{BCNCPAM} in the nonlocal setting.

\begin{theorem}\label{thm: overdet}
Let $\Omega$ be the epigraph of a $\mathcal{C}^2$ and globally Lipschitz function $\varphi: \R^{N-1} \to \R$, 
satisfying \eqref{hp epigrafico}. Let $f$ satisfy ($f1$)-($f3$), and let us suppose that \eqref{overdet} has a bounded solution $u$. 
Then $\Omega$ is a half-space $\{x_N > const.\}$, and $u$ depends only on $x_N$ and is monotone increasing in $x_N$.
\end{theorem}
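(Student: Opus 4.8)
The plan is to follow the strategy of \cite[Theorem 7.1]{BCNCPAM} but replace its pointwise barrier arguments with the global/sliding techniques already developed for Theorem~\ref{thm: main 1} and Theorem~\ref{thm: main 2}. First I would invoke Theorem~\ref{thm: main 1}: since $\varphi$ is $\mathcal{C}^2$ and globally Lipschitz with some constant $K$, the hypotheses of Theorem~\ref{thm: main 1} hold, so the solution $u$ is unique, satisfies $0<u<\mu$ in $\Omega$, is globally H\"older continuous (so $(\pa_\nu)_s u$ is well defined along $\pa\Omega$ by the boundary regularity theory cited), converges to $\mu$ as $\dist(x,\pa\Omega)\to+\infty$, and most importantly obeys the directional monotonicity in part $(vi)$: $\pa_{x_N}u+\sum_i a_i\pa_{x_i}u>0$ in $\Omega$ whenever $\sum_i a_i^2<K^{-2}$. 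The goal is then to upgrade this cone of monotonicity to the full half-space $\{e\cdot x>c\}$ for every unit vector $e$ with $e_N>0$, which forces $\varphi$ to be an affine function; combined with flatness at infinity \eqref{hp epigrafico}, an affine Lipschitz function that has vanishing increments at infinity must be constant, so $\Omega=\{x_N>c\}$ and the $1$-dimensional symmetry follows from Corollary~\ref{cor: half-space}.

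The mechanism for the upgrade is a sliding/moving-planes argument in which the overdetermined Neumann condition $(\pa_\nu)_s u=const.$ plays the decisive role. Concretely, I would fix a horizontal direction $\tau\in\R^{N-1}$ and for $t>0$ compare $u$ with its translate $u_t(x):=u(x+t\tau)$; since $\Omega$ is an epigraph, for $t$ small \eqref{hp epigrafico} makes the translated domain $\Omega-t\tau$ nearly coincide with $\Omega$ far out, and property $(ii)$ (uniform convergence to $\mu$ away from the boundary) together with the general maximum principle advertised in the introduction lets one start the sliding. One slides until a first contact occurs; at an interior contact point the strong maximum principle for $(-\Delta)^s$ (using that $f$ is locally Lipschitz and $u$ bounded, so $f(u)$ has a one-sided Lipschitz comparison) gives $u\equiv u_t$, which would already force $\varphi(\cdot+t\tau)\equiv\varphi(\cdot)+$const.; if instead contact happens on the boundary, the fractional Hopf lemma (\cite[Proposition 3.3]{FaJa}, \cite[Lemma 1.2]{GreSer}) combined with the constancy of $(\pa_\nu)_s u$ along $\pa\Omega$ yields a contradiction unless the two domains are translates of one another. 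Iterating/rescaling over all $\tau$ and all $t$, and using part $(vi)$ to control the vertical direction, one concludes that $\Omega$ is invariant under a family of translations rich enough to force $\partial\Omega$ to be a hyperplane; the Lipschitz bound and \eqref{hp epigrafico} then pin it down to a horizontal hyperplane.

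The main obstacle, as in \cite{BCNCPAM} and amplified by nonlocality, is making the sliding method \emph{start} and \emph{not stop prematurely} in an unbounded domain with no decay of $u$ at infinity: one must show that for small $t>0$ the inequality $u\ge u_t$ (or the reverse) holds in the relevant shifted region, and here the nonlocal nature means one cannot localize — the value of $(-\Delta)^s(u-u_t)$ at a point depends on the sign of $u-u_t$ everywhere, including in the symmetric difference of $\Omega$ and its translate where $u$ vanishes on one side but not the other. This is exactly where the bespoke maximum principle ``tailor-made for non-decaying solutions in possibly unbounded domains'' and the general fractional sliding method promised in the introduction must be deployed: the former handles the behavior near infinity via property $(ii)$, while the latter packages the interior-versus-boundary-contact alternative. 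A secondary delicate point is the boundary analysis: since we only assume $\varphi\in\mathcal{C}^2$, the exterior sphere condition holds and the boundary regularity of \cite{GrubbAdv, GrubbAnal, RosSerJMPA, RosSerDuke} applies, but one must check that the Hopf-type estimate is \emph{uniform} along $\pa\Omega$ (using the uniform $\mathcal{C}^2$ and Lipschitz bounds) so that the contradiction at a boundary contact point is quantitative and survives the limiting procedure; I expect this uniformity, rather than any single inequality, to be the technically heaviest part of the argument.
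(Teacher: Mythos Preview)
Your overall architecture---sliding plus the fractional Hopf lemma at a boundary contact, exploiting the constant $s$-Neumann datum---matches the paper's, but the implementation has a real gap at precisely the step you flag as the main obstacle. You propose to slide \emph{purely horizontally}, comparing $u$ with $u_t(x)=u(x+t\tau)$ for $\tau\in\R^{N-1}$, and hope that \eqref{hp epigrafico} together with Theorem~\ref{thm: super max} or Theorem~\ref{thm: sliding} lets you start. They do not. For a generic Lipschitz epigraph neither $\Omega-t\tau\supseteq\Omega$ nor the reverse holds for any $t\ne0$, so there is no initial ordering between $u$ and $u_t$ to slide from; Theorem~\ref{thm: super max} requires $c\le0$ (false here, since $f$ is not non-increasing on all of $[0,\mu]$), and Theorem~\ref{thm: sliding} needs $\Omega$ bounded. ``Nearly coinciding far out'' does not produce a global inequality in a nonlocal problem.

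The missing idea is to slide in the \emph{combined} direction $(\tau',h)$: for each fixed $\tau'\in\R^{N-1}$ one sets $\Sigma_{h,\tau}=\Omega-(\tau',0)-he_N$ and starts with $h$ large, so that $\Sigma_{h,\tau}\supset\Omega$ trivially by the Lipschitz bound on $\varphi$. The comparison $u(\cdot+(\tau',h))\ge u$ then follows by literally rerunning the argument of Subsection~\ref{sec: uniqueness} (Lemma~\ref{lem: 5.1 BCN} and the proof of $(v)$--$(vi)$), which you already have available. One decreases $h$ to $h^*:=\inf\{h\ge0:\Sigma_{k,\tau}\supset\Omega\ \text{for all }k>h\}$ and shows $h^*=0$. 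Here \eqref{hp epigrafico} enters differently from what you suggest: it guarantees that if $h^*>0$ the near-contact points $x_j\in\Omega\setminus\Sigma_{h_j,\tau}$ form a \emph{bounded} sequence (an unbounded sequence would have $\varphi(x_j'+\tau')-\varphi(x_j')\to0$, forcing $x_j$ back into $\Sigma_{h_j,\tau}$), so one extracts an actual internal tangency point $a\in\pa\Omega\cap\pa\Sigma_{h^*,\tau}$. At $a$ the Hopf lemma applied to $u_{h^*,\tau}-u\ge0$ contradicts the constant Neumann condition; no uniformity of Hopf along $\pa\Omega$ is needed, since one works at a single point. Once $h^*=0$ for every $\tau'$, one has $\Omega-(\tau',0)\supseteq\Omega$ for all $\tau'\in\R^{N-1}$, which directly forces $\varphi$ to be constant---the detour through ``$\varphi$ affine plus flatness'' is unnecessary.
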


Theorem \ref{thm: overdet} is proved in Section \ref{sec: overdet}.

\subsection{Boundary regularity in domains satisfying an
exterior cone condition}\label{au45678IUHAUaJ13}

In this subsection, we obtain general boundary regularity results for solutions to 
\begin{equation}\label{p_r}
\begin{cases}
(-\Delta)^s u = g(x) & \text{in $\Omega$}, \\
u = 0 & \text{in $\R^N \setminus \Omega$}.
\end{cases}
\end{equation}
In this setting, we will not restrict to the case in which~$\Omega$
is an epigraph, but we will assume instead that~$\Omega$ satisfies an 
exterior cone condition with some uniform opening $\theta \in (0,\pi)$.
More precisely, 
for a given direction~$e\in \S^{N-1}$ and a given angle~$\theta\in(0,\pi)$,
we denote by~$\Sigma_{e,\theta}$ the open, rotationally symmetric cone 
of axis~$e$ and opening~$\theta$ (that is the set of all vectors~$v\in\R^n$
that form with~$e$ an angle less than~$\theta$).
We suppose that
there exists $\theta\in(0,\pi)$ such that: if $x \in \partial \Omega$, then 
for a direction $e \in \S^{N-1}$ the cone $x + \Sigma_{e,\theta}$ is exterior
to $\Omega$ and tangent to $\Omega$ in $x$. 

We point out that in this definition the opening $\theta$ is 
the same for all the points of $\pa \Omega$, while the direction $e$ can change.
We also observe that globally Lipschitz epigraphs $\{x_N > \varphi(x')\}$ enjoy 
the uniform exterior cone condition, with $\theta$ depending only on the
Lipschitz constant of $\varphi$. 

The boundary regularity of solutions to boundary value problems driven by integro-differential 
operators has been object of several contributions \cite{Bog, GrubbAdv, GrubbAnal, RosSerpre, RosSerJMPA, RosSerDuke}. 
As far as we know, for boundary value problems of type \eqref{p_r} the minimal assumption on $\Omega$ was considered in \cite[Proposition 1.1]{RosSerJMPA}, 
where the authors supposed that $\Omega$ is a bounded Lipschitz domain satisfying a uniform exterior ball condition. 
The following theorem weaken this assumption, establishing the global H\"older continuity of bounded solutions to \eqref{p_r} when $\Omega$ 
satisfies a uniform exterior cone condition. 

\begin{theorem}\label{thm: boundary regularity}
Let $\Omega$ be a possibly unbounded open set of $\R^N$, satisfying the 
uniform exterior cone condition with opening $\theta \in (0,\pi/2)$. Let $u \in L^\infty(\R^N)$ be a solution to \eqref{p_r}, 
with $g \in L^\infty(\Omega)$. 

Then, there exist $\alpha \in (0,s)$ and $C>0$, both depending only on $\theta$, $s$ and $N$, such that $u \in \mathcal{C}^{0,\alpha}(\R^N)$, and
\begin{equation}\label{ST:T}
\|u\|_{\mathcal{C}^{0,\alpha}(\R^N)} \le  C \left[
\left(1+ \|u\|_{L^\infty(\R^N)}\right) \left(1+\|g\|_{L^\infty(\Omega)}^{\frac{\alpha}{2s-2\alpha}}\right) + \|g\|_{L^\infty(\Omega)} \right].
\end{equation}
Also, for any $s \in (0,1)$, the map $\theta \mapsto \alpha(\theta,s)$ is monotone non-decreasing.
\end{theorem}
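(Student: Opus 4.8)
\emph{Plan of proof.} The scheme is the one classical for boundary regularity of $(-\Delta)^s$: combine scale‑invariant interior estimates with a boundary barrier built from the exterior cone condition, and glue the two by a covering argument, the exponent $\alpha=\alpha(\theta,s,N)$ being produced by an auxiliary barrier lemma and then fed into a comparison. Away from $\partial\Omega$ one uses the classical interior regularity for $(-\Delta)^s$ with bounded right‑hand side (see the references in \cite{RosSerJMPA}): if $B_{2\rho}(x)\subset\Omega$ then, for every $\beta\in(0,\min\{2s,1\})$,
\[
[u]_{\mathcal{C}^{0,\beta}(B_\rho(x))}\le C\,\rho^{-\beta}\big(\|u\|_{L^\infty(\R^N)}+\rho^{2s}\|g\|_{L^\infty(\Omega)}\big),\qquad C=C(\beta,s,N).
\]
Since we will obtain a boundary exponent $\alpha<s<2s$, we take $\beta=\alpha$, so the interior piece already carries the Hölder exponent dictated by the boundary.

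\emph{The cone barrier.} This is the heart of the matter, and it replaces the explicit local barriers available when $s=1$. For $\theta\in(0,\pi/2)$ define $\alpha=\alpha(\theta,s,N)$ as the homogeneity degree of the positive function $|x|^{\alpha}\Theta(x/|x|)$ that is $s$‑harmonic in $\R^N\setminus\overline{\Sigma_{e,\theta}}$ and vanishes on $\overline{\Sigma_{e,\theta}}$ (equivalently, $\alpha$ is determined by the first Dirichlet eigenvalue on $\S^{N-1}\setminus\overline{\Sigma_{e,\theta}}$ of the spherical operator attached to $(-\Delta)^s$ via this ansatz). Since $\theta=\pi/2$ is the half‑space, with explicit profile $(x_N)_+^s$, and a strictly smaller exterior cone has a strictly larger complement, in which the $s$‑harmonic profile vanishes more slowly, one gets $0<\alpha(\theta,s,N)<s$ for $\theta\in(0,\pi/2)$ (positivity because the exterior cone has positive $s$‑capacity). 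Two technical adjustments are needed: since $\Sigma_{e,\theta}$ is unbounded, its profile may become small far out near $\partial\Sigma_{e,\theta}$, so I would use the barrier $w:=w_{\theta'}$ of a \emph{slightly smaller} cone $\Sigma_{e,\theta'}$, $\theta'<\theta$, whose angular profile is bounded below by a positive constant on $\S^{N-1}\setminus\Sigma_{e,\theta}$ and which is still $s$‑harmonic on a neighbourhood of $\Omega\cap B_R(x_0)$ (because $\partial\Sigma_{e,\theta'}$ lies strictly inside $\Sigma_{e,\theta}$, hence away from $\Omega$); and for the inhomogeneous term one uses the explicit torsion function $\psi_R$ of a ball $B_R$, namely $(-\Delta)^s\psi_R=1$ in $B_R$, $\psi_R=0$ outside $B_R$, $0\le\psi_R\le CR^{2s}$.

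\emph{Boundary estimate and patching.} Fix $x_0\in\partial\Omega$, with exterior tangent cone $x_0+\Sigma_{e,\theta}$, and a scale $R>0$. On the bounded set $D:=\Omega\cap B_R(x_0)$ one compares $u$ with $\pm v$, where
\[
v(x):=A\,w(x-x_0)+\|g\|_{L^\infty(\Omega)}\,\psi_R(x-x_0),\qquad A:=C\,R^{-\alpha}\|u\|_{L^\infty(\R^N)}.
\]
Then $(-\Delta)^s\big(A\,w(\cdot-x_0)\big)=0$ in $D$ (by the previous step), so $(-\Delta)^s v\ge\|g\|_{L^\infty(\Omega)}\ge|(-\Delta)^s u|$ in $D$; and $v\ge|u|$ on $\R^N\setminus D$ (on $\Omega^c$ since $v\ge0=u$; on $B_R(x_0)^c\cap\Omega$ since there $x-x_0\notin\Sigma_{e,\theta}$, whence $w(x-x_0)\ge c\,R^{\alpha}$ and the choice of $A$ makes $v\ge\|u\|_{L^\infty(\R^N)}$). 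A maximum principle — in the robust form, valid on bounded irregular sets and for non‑decaying data, that the paper sets up among its preliminary tools — then yields
\[
|u(x)|\le C\,\frac{\|u\|_{L^\infty(\R^N)}}{R^{\alpha}}\,|x-x_0|^{\alpha}+C\,\|g\|_{L^\infty(\Omega)}\,R^{2s}\qquad\text{for }x\in\Omega\cap B_R(x_0).
\]
Optimizing the free scale $R$ against $|x-x_0|$ converts this into a bound of the form $|u(x)|\le C\big(1+\|u\|_{L^\infty(\R^N)}\big)\big(1+\|g\|_{L^\infty(\Omega)}^{\alpha/(2s-2\alpha)}\big)\dist(x,\partial\Omega)^{\alpha}$, the exponent $\alpha/(2s-2\alpha)$ (finite precisely because $\alpha<s$) recording this optimization, while the bare additive $\|g\|_{L^\infty(\Omega)}$ in \eqref{ST:T} absorbs the unit‑scale torsion contribution. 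Since $u=0$ on $\partial\Omega$, this controls the oscillation of $u$ in balls centred on $\partial\Omega$; combining it with the interior estimate used at scale $\dist(\cdot,\partial\Omega)$, via the usual dichotomy ($|x-y|$ smaller or larger than a fixed fraction of $\dist(x,\partial\Omega)$), upgrades $u$ to $\mathcal{C}^{0,\alpha}(\R^N)$ and gives \eqref{ST:T}.

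\emph{Monotonicity in $\theta$, and the main obstacle.} If $\theta_1\le\theta_2$ then $\overline{\Sigma_{e,\theta_1}}\subset\overline{\Sigma_{e,\theta_2}}$, so $\S^{N-1}\setminus\overline{\Sigma_{e,\theta_2}}\subset\S^{N-1}\setminus\overline{\Sigma_{e,\theta_1}}$; by domain monotonicity of the first Dirichlet eigenvalue of the spherical operator entering the homogeneous ansatz, the corresponding homogeneity exponent cannot decrease, i.e.\ $\alpha(\theta_1,s)\le\alpha(\theta_2,s)$. I expect the principal difficulty to be exactly the barrier construction: lacking any explicit $s$‑harmonic function, one must either rigorously analyse the spherical spectral problem for the cone — controlling the nonlocal tail and the unbounded directions — to obtain existence, homogeneity, positivity of the angular profile and its monotonicity in $\theta$, or else hand‑build a homogeneous supersolution with these features; and in either case one must keep in mind that the barrier lives on all of $\R^N$ and has to be controlled on the whole complement of $B_R(x_0)$, not merely on its boundary.
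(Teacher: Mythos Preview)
Your overall strategy---homogeneous cone profile as the main barrier, comparison on $\Omega\cap B_R(x_0)$, then patching with interior estimates via the dichotomy on $|x-y|$ versus $\dist(x,\partial\Omega)$---is the right one and matches the paper. The use of a slightly smaller exterior cone $\Sigma_{e,\theta'}$ so that the angular profile stays bounded below on $\S^{N-1}\setminus\Sigma_{e,\theta}$ is exactly what the paper does (their $\bar\theta_2<\bar\theta_1=\theta$), and the monotonicity of $\alpha$ in $\theta$ indeed comes from~\cite[Lemma~3.3]{BanBog}, as you indicate.

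The genuine gap is in how you absorb the right-hand side $g$. Your barrier is $v=Aw(\cdot-x_0)+\|g\|_{L^\infty}\psi_R(\cdot-x_0)$, with $\psi_R$ the torsion function of the ball $B_R$. But $\psi_R$ is \emph{maximal} at the centre, so $\psi_R(x-x_0)\sim R^{2s}$ for $x$ near $x_0$: your barrier does not vanish at $x_0$. The comparison then only yields
\[
|u(x)|\le C_1R^{-\alpha}\|u\|_{L^\infty}\,d^{\alpha}+C_2\|g\|_{L^\infty}R^{2s},
\]
with a second term constant in $d$, and optimizing in $R$ cannot fix this: the minimum over $R>0$ gives a bound $\sim\|u\|^{2s/(2s+\alpha)}\|g\|^{\alpha/(2s+\alpha)}d^{2s\alpha/(2s+\alpha)}$, with the strictly smaller H\"older exponent $\tfrac{2s\alpha}{2s+\alpha}<\alpha$ and the wrong power of $\|g\|$. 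In particular, the exponent $\alpha/(2s-2\alpha)$ that you wrote does not fall out of your estimate. What the paper does instead is replace your torsion term by $-v^2$, where $v$ is the homogeneous cone profile itself: using $(-\Delta)^s v=0$ one computes
\[
(-\Delta)^s(-v^2)(x)=\int_{\R^N}\frac{(v(x)-v(y))^2}{|x-y|^{N+2s}}\,dy\ge C\,|x|^{2\alpha-2s},
\]
the last inequality being a one-sided Euler-type bound for $\alpha$-homogeneous positive functions. The barrier $z_R=2R^{-\alpha}v-v^2$ thus both vanishes at the vertex \emph{and} has supersolution excess $\ge C|x|^{2\alpha-2s}$ that blows up there; choosing $R$ so that $CR^{2\alpha-2s}\ge\|g\|_{L^\infty}/\max\{\|u\|_{L^\infty},1\}$ makes $(-\Delta)^s(z_R-\tilde u)\ge0$ on $B_R$, and this is exactly where the power $\|g\|^{\alpha/(2s-2\alpha)}$ appears. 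A further truncation of $z_R$ (setting it equal to $1$ where $v$ is large) handles the far-field, since $z_R$ becomes negative for large $|x|$---a nonlocal issue you correctly anticipated.
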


\begin{remark}
We stress that it is not even necessary to suppose the Lipschitz regularity of $\Omega$.

If $\theta \ge \pi/2$, then the uniform exterior cone condition yields a uniform exterior sphere condition, 
and hence by \cite[Proposition 1.1]{RosSerJMPA} solutions to fractional boundary value problems in $\Omega$ 
are already known to be of class $\mathcal{C}^{0,s}(\R^N)$. This is why we only consider $\theta \in (0,\pi/2)$ in 
Theorem~\ref{thm: boundary regularity}.
\end{remark}

\begin{remark}
The proof of Theorem \ref{thm: boundary regularity} is based upon the construction of a wall of barriers, whose definition uses essentially the existence of homogeneous solutions to non-local problems in cones, contained in \cite{BanBog}. The homogeneity exponent appearing in this construction coincides precisely with the regularity index $\alpha$, and in particular the monotonicity of $\alpha$ in the angle $\theta$ follows from Lemma 3.3 in \cite{BanBog} (see Section \ref{sec: boundary}, and in particular Remark \ref{rem: on monotonicity in theta}, for more details).  
\end{remark}

For our purposes, the importance of \eqref{ST:T}
is to provide uniform convergence of sequences of solutions 
under very reasonable assumptions.
Namely, let us consider a sequence $\{u_n\}$ of solutions to
\[
\begin{cases}
(-\Delta)^s u_n = g_n & \text{in $\Omega_n$}, \\
u_n = 0 & \text{in $\R^N \setminus \Omega_n$}, 
\end{cases}
\]
with $\{u_n\}$ and $\{g_n\}$ uniformly bounded in $L^\infty(\R^N)$ 
and $L^\infty(\Omega_n)$, respectively. Then Theorem \ref{thm: boundary regularity} 
implies that $u_n \to u_\infty$ locally uniformly in $\R^N$, up to a subsequence. \medskip

The proof of Theorem \ref{thm: boundary regularity} is the object of Section \ref{sec: boundary}.

\subsection{Some useful results of independent interest}

We conclude the introduction stating some general results 
that are auxiliary to the proof of the main theorems and
which we think are also of independent interest.

\medskip
 
In proving Theorem \ref{thm: main 1}, a crucial tool will be a maximum principle in 
unbounded domain for the fractional Laplacian. This is the fractional counterpart of \cite[Theorem 2.1]{BCNCPAM}, 
and we stress that while in the local case the domain $D$ is supposed to be connected, this is not necessary in the nonlocal setting.

\begin{theorem}\label{thm: super max}
Let $D$ be an open set in $\R^N$, possibly unbounded and disconnected. 
Suppose that $\overline{D}$ is disjoint from the closure of an infinite open connected
cone. Let $z \in \mathcal{C}(\R^N)$ bounded above, and satisfying in viscosity sense
\begin{equation}\label{EQ:z}
\begin{cases}
(-\Delta)^s z - c(x) z \le 0 & \text{in $D$}, \\
z \le 0 & \text{in $\R^N \setminus D$},
\end{cases}
\end{equation}
for some $c \in L^\infty(D)$, $c \le 0$ a.e. in $D$, with $c z \in \C(D)$. Then $z \le 0$ in $D$.
\end{theorem}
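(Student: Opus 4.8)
The plan is to argue by contradiction, assuming $\sigma := \sup_D z > 0$, and to exploit the geometric hypothesis that $\overline D$ misses an infinite open connected cone $\Sigma$ in order to ``slide'' a comparison barrier into $D$ and catch the supremum. Since $z$ is only bounded above (not decaying), the supremum need not be attained, so the first step is to set up a barrier/penalization that forces an interior contact point. A natural candidate for the barrier is the $s$-harmonic function $b$ of the exterior cone: after translating and rotating we may place the vertex of $\Sigma$ at the origin with axis $-e_N$, and then take a positive function $b \in \C(\R^N)$ with $(-\Delta)^s b \le 0$ in $\R^N \setminus \overline\Sigma$, $b \equiv 0$ in $\overline\Sigma$, $b$ bounded, and $b \to $ (positive constant) at infinity away from the cone --- the existence of such homogeneous-type solutions in cone complements is exactly the kind of object furnished by \cite{BanBog}, and is in the same spirit as the barriers used for Theorem~\ref{thm: boundary regularity}. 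For $R>0$ large, translate this barrier to $b_R(x) := b(x + R e_N)$, which is supersolution-like in $\R^N\setminus(\overline\Sigma - Re_N)$ and in particular in a neighborhood of $D$, and whose infimum over $\overline D$ tends to $\sigma' := \inf_{\R^N} (\text{limit value of }b) > 0$ as $R \to \infty$.

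The core step is then a sliding argument. Consider, for $\lambda \ge 0$, whether $z \le \lambda\, b_R$ holds everywhere. For $\lambda$ large this is clear on a fixed large ball (since $b_R$ is bounded below by a positive constant there and $z$ is bounded above), and outside $D$ we have $z \le 0 \le \lambda b_R$; away from the fixed ball one controls things using that $z$ is bounded above while $b_R$ stays bounded below by a positive constant on all of $\overline D$ when $R$ is large. Let $\bar\lambda$ be the infimum of admissible $\lambda$. If $\bar\lambda = 0$ then $z \le 0$ and we are done; otherwise $\bar\lambda > 0$ and, by a compactness/limiting argument (here one uses continuity of $z$ and the fact that $b_R$ is bounded below by a positive constant on $\overline D$, which prevents the contact point from escaping to infinity), there is a contact point $x_0 \in \overline D$ with $z(x_0) = \bar\lambda\, b_R(x_0)$. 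Since $z \le 0$ in $\R^N \setminus D$ while $\bar\lambda b_R > 0$ everywhere outside the (disjoint) cone, the contact point must lie in $D$ itself. At $x_0$ the function $w := z - \bar\lambda b_R$ has an interior maximum equal to $0$, so in the viscosity sense $(-\Delta)^s w(x_0) \ge 0$; but one computes
\[
(-\Delta)^s w(x_0) \;=\; \pv \int_{\R^N} \frac{w(x_0) - w(y)}{|x_0 - y|^{N+2s}}\,dy \;<\; 0
\]
strictly, because $w(x_0) - w(y) \le 0$ for all $y$ and, crucially, $w(x_0) - w(y) < 0$ on a set of positive measure --- namely on (a translate of) the cone $\Sigma$, where $z \le 0$ but $b_R > 0$ so $w < 0 = w(x_0)$. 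Combining with the equations: $(-\Delta)^s z(x_0) \le c(x_0) z(x_0) \le 0$ (using $c \le 0$ and $z(x_0) = \bar\lambda b_R(x_0) > 0$) and $\bar\lambda (-\Delta)^s b_R(x_0) \le 0$, so $(-\Delta)^s w(x_0) \le (-\Delta)^s z(x_0) \le 0$, contradicting the strict positivity forced at an interior maximum once the strict sign of the cone contribution is accounted for. The sign bookkeeping with the zeroth-order term is what makes $c \le 0$ essential, and it is also why no connectedness of $D$ is needed: the argument is entirely pointwise at $x_0$ together with the global sign $z \le 0$ off $D$.

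The main obstacle, and the place requiring care, is the construction and placement of the barrier $b_R$: we need a function that is simultaneously (a) a genuine $s$-supersolution on an open neighborhood of $\overline D$ (not merely on the cone complement's interior, since the viscosity comparison must be valid at $x_0$), (b) bounded below on all of $\overline D$ by a positive constant so the contact point cannot run off to infinity, and (c) exactly zero on the exterior cone so that the decisive strict inequality in the fractional Laplacian at $x_0$ kicks in. Reconciling (a) and (c) is delicate because the cone complement is not smooth at the vertex; the fix is to use a slightly \emph{enlarged} cone $\Sigma'$ strictly containing a sub-cone $\Sigma'' \supset$ translate of the true exterior cone, arrange $b$ to be $s$-harmonic on $\R^N \setminus \overline{\Sigma'}$ and to vanish only on $\overline{\Sigma'}$, and then note that $D$ stays at positive distance from $\Sigma'$ for the relevant range of translations --- so that near any point of $\overline D$ the barrier is honestly supersolution, while the cone on which $w$ is strictly negative (a translate of the original exterior cone, which sits inside $\Sigma'$) still has positive measure. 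Once this barrier is in hand, the sliding method and the one-point computation close the argument as above.
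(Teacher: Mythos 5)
Your proposal takes a genuinely different route from the paper. The paper's proof works with $z^+ := \max\{z,0\}$, observes that since $c\le 0$ one has $(-\Delta)^s z^+ \le 0$ in $\R^N$ in the viscosity sense, takes a \emph{maximizing sequence} $q_j$ with $z^+(q_j)\to A:=\sup z^+$, uses the elementary cone-geometry lemmas to show each ball $B_{r_j}(q_j)$ meets the exterior cone (and hence the set $\{z^+=0\}$) with a fixed fraction of its measure, and then invokes the Silvestre growth lemma (\cite[Corollary~4.5]{MR2244602}) to get $z^+(q_j)\le(1-\gamma)A$, a contradiction in the limit. You propose instead a sliding-barrier comparison. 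That is a natural and potentially viable alternative, but as written it has several genuine gaps.

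First, the barrier you describe --- a bounded, positive function $b$ vanishing identically on the exterior cone, $s$-superharmonic (or $s$-harmonic) off the cone, and tending to a \emph{positive constant} at infinity --- is not what \cite{BanBog} provides and very likely does not exist. The Ba\~nuelos--Bogdan solutions are $\alpha$-homogeneous with $\alpha>0$: they vanish at the vertex and grow like $|x|^\alpha$ away from it, so they are unbounded. A bounded $s$-harmonic function in the cone complement vanishing on a set of positive measure and with a positive limit at infinity is not available; the right object for this approach is precisely the \emph{unbounded} homogeneous solution.

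Second --- and this is the key gap --- the contact-point step is not justified. You claim that $b_R$ being ``bounded below by a positive constant on $\overline D$'' prevents the contact point from escaping to infinity. That does not follow: if both $z$ and $b_R$ are bounded, the supremum of $z-\bar\lambda b_R$ (equivalently of the ratio $z/b_R$) may well be approached only along a sequence going to infinity and never attained, exactly because $z$ is assumed \emph{bounded above but not decaying}. This is the actual difficulty the theorem's hypotheses create, and it is precisely what the paper's maximizing-sequence argument is designed to handle without assuming attainment. With a \emph{bounded} barrier the sliding argument breaks at this point. The fix is exactly to discard the boundedness requirement and use the $\alpha$-homogeneous Ba\~nuelos--Bogdan barrier, which blows up at infinity (uniformly on $\overline D$ since $\overline D$ stays at a positive opening-angle away from the translated cone), forcing $z-\bar\lambda b_R\to-\infty$ at infinity and hence attainment of the supremum. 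Once that is in place, the one-point viscosity argument does close.

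Third, there are sign slips in the one-point computation that should be corrected before the argument can be trusted: at an interior maximum $x_0$ of $w=z-\bar\lambda b_R$ with $w(x_0)=0$ one has $w(x_0)-w(y)=-w(y)\ge 0$ for all $y$ (you wrote $\le 0$), hence $(-\Delta)^s w(x_0)\ge 0$, strictly $>0$ once the cone (where $w<0$) is accounted for; the contradiction then comes from showing $(-\Delta)^s w(x_0)\le 0$ via the equation. For that last inequality you need $(-\Delta)^s b_R(x_0)\ge 0$ (i.e.\ a supersolution, or simply $s$-harmonic, barrier near $x_0$), whereas your first paragraph specifies $(-\Delta)^s b\le 0$ (a subsolution), which is the wrong sign and would make $-\bar\lambda(-\Delta)^s b_R(x_0)\ge 0$, ruining the estimate. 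Since the Ba\~nuelos--Bogdan function is $s$-\emph{harmonic} on the cone complement containing $D$, the correct statement is $(-\Delta)^s b_R=0$ there, and then all the signs fall into place.

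In short: the sliding-barrier strategy can be made to work, but only with the unbounded homogeneous barrier (so that the touching point is attained), not the bounded one you posit, and with the sign of the barrier inequality and the sign of $w(x_0)-w(y)$ corrected. The paper circumvents the non-attainment issue entirely by working with a maximizing sequence and a quantitative growth lemma rather than a pointwise comparison.
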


We observe that Theorem~\ref{thm: super max} here improves~\cite[Theorem 2.4]{MR3427047},
where a similar result was proved when~$D$ was a half-space.

We shall also need the following version of the sliding method for the fractional Laplacian. 

\begin{theorem}\label{thm: sliding}
Let $\Omega$ be a bounded open subset of $\R^N$, convex in the direction $e_N=(0',1)$. Let $w$ be a solution of
\begin{equation*}
\begin{cases}
(-\Delta)^s w = g(x,w) & \text{in $\Omega$}, \\
w=\varphi & \text{in $\R^N \setminus \Omega$},
\end{cases}
\end{equation*}
with $g(x,t)$ continuous in $\overline{\Omega} \times \R$, non-decreasing in $x_N$, 
and locally Lipschitz continuous in $t$, uniformly in $x$, in the following sense: for any $M>0$, there exists $C>0$ such that
\[
\sup_{x \in \Omega} \frac{|g(x,t)-g(x,\tau)|}{|t-\tau|} \le C \qquad {\mbox{ for any }} t,\tau \in [-M,M].
\]
On the boundary term $\varphi$, we suppose that for every $x=(x',x_N)$, $y=(y',y_N)$, $z=(z',z_N)$ with $x_N < y_N < z_N$, it holds
\begin{equation}\label{hp phi sliding}
\begin{split}
\varphi(x) < w(y) < \varphi(z) \qquad & \text{if $y \in \Omega$}, \\
\varphi(x) \le \varphi(y) \le  \varphi(z) \qquad & \text{if $y \in \R^N \setminus \Omega$}. 
\end{split}
\end{equation}
Then $w$ is monotone increasing with respect to $x_N$.
\end{theorem}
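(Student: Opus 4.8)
\textbf{Proof plan for Theorem~\ref{thm: sliding}.}
The plan is to implement the classical sliding method of Berestycki--Nirenberg in the nonlocal setting, using the translated functions $w^\tau(x) := w(x',x_N+\tau)$ for $\tau \geq 0$. For $\tau$ large enough (larger than the $x_N$-width of $\Omega$), the shifted domain $\Omega^\tau := \Omega - \tau e_N$ and $\Omega$ are disjoint in the relevant sense, and the boundary monotonicity hypothesis \eqref{hp phi sliding} will force $w^\tau \geq w$ on all of $\R^N$ (on $\R^N \setminus \Omega$ this is the second line of \eqref{hp phi sliding}, on $\Omega$ it follows because then $w^\tau$ sees only exterior data lying above $w$). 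I would then define $\bar\tau := \inf\{ \tau_0 > 0 : w^\tau \geq w \text{ in } \R^N \text{ for all } \tau \geq \tau_0\}$ and aim to show $\bar\tau = 0$; since $w$ is continuous, this yields $w^\tau \geq w$ for every $\tau \geq 0$, which is exactly monotonicity in $x_N$.

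The core of the argument is a contradiction step: assume $\bar\tau > 0$. By continuity $w^{\bar\tau} \geq w$ everywhere, and I want a strong maximum principle to upgrade this to $w^{\bar\tau} > w$ in the interior of the overlap region $\Omega \cap \Omega^{\bar\tau}$. Here I set $z := w - w^{\bar\tau} \leq 0$; using the local Lipschitz hypothesis on $g$ in $t$ and its monotonicity in $x_N$, one writes $(-\Delta)^s z = g(x,w) - g(x',x_N+\bar\tau, w^{\bar\tau}) \leq g(x,w) - g(x,w^{\bar\tau}) = c(x) z$ in $\Omega \cap \Omega^{\bar\tau}$ for a bounded $c$ with $cz$ continuous, so $z$ is a fractional-subsolution that is $\leq 0$, and if it vanishes at an interior point the strong maximum principle (applied componentwise on each connected piece, combined with the fact that $z$ is \emph{strictly} negative somewhere in each component because of the strict inequalities on $\R^N\setminus\Omega$ in \eqref{hp phi sliding}) forces $z \equiv 0$ on that component, which would make $w$ periodic in $x_N$, contradicting boundedness of $\Omega$ together with \eqref{hp phi sliding}. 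Thus $z < 0$ in the interior of $\Omega \cap \Omega^{\bar\tau}$.

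Next comes the opening-the-slab step: I would show that for $\tau$ slightly less than $\bar\tau$ one still has $w^\tau \geq w$ in $\R^N$, contradicting the minimality of $\bar\tau$. Pick a large compact set $\mathcal{K} \subset \interior(\Omega \cap \Omega^{\bar\tau})$ capturing most of the overlap; on $\mathcal{K}$ we have $w - w^{\bar\tau} \leq -\eta < 0$, so by uniform continuity of $w$ (which holds on compacts, $w$ being continuous) we keep $w - w^\tau < 0$ on $\mathcal{K}$ for $\tau$ close to $\bar\tau$. On the remaining thin region $(\Omega \cap \Omega^\tau) \setminus \mathcal{K}$ — which lies in a small neighbourhood of $\pa\Omega$ and of $\pa\Omega^{\bar\tau}$ — I would apply a maximum-principle-in-small-domains argument: on this set $z_\tau := w - w^\tau$ satisfies $(-\Delta)^s z_\tau \leq c(x) z_\tau$, is $\leq 0$ on the complement (by \eqref{hp phi sliding} where we are outside $\Omega$, and by the $\mathcal{K}$-bound where we are inside), and the set has small measure, so a fractional ABP-type or small-volume maximum principle gives $z_\tau \leq 0$ there too. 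Combining, $w^\tau \geq w$ in $\R^N$ for such $\tau$, the desired contradiction. I expect the thin-region maximum principle (controlling the sign of a subsolution on a small-measure neighbourhood of the boundary, where the exterior data is only controlled in part) to be the main obstacle: one must arrange the compact set $\mathcal{K}$ and the value of $\tau$ so that the exterior data of $z_\tau$ is genuinely nonpositive on the complement of the thin region, and then invoke the appropriate nonlocal small-volume (or narrow-domain) maximum principle with a uniform constant. The boundedness and convexity in $e_N$ of $\Omega$ are used precisely to guarantee that the overlap $\Omega \cap \Omega^\tau$ varies continuously and that the slab can be opened.
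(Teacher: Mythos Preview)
Your proposal is correct and follows precisely the approach the paper itself indicates: the paper omits the proof of Theorem~\ref{thm: sliding}, stating only that ``thanks to the maximum principle in sets of small measure \cite[Proposition 2.2]{FeWa}, the proof \ldots\ is a straightforward adaptation of the local one, given in \cite{BeNi}.'' Your outline is exactly that adaptation --- the sliding scheme with the small-measure maximum principle (Proposition~\ref{lem: small} in the paper) handling the thin boundary layer --- so there is nothing to add.

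One small remark: in the nonlocal setting the strong maximum principle (Proposition~\ref{prop: strong}) already gives $z\equiv 0$ on all of $\R^N$, not merely on a connected component, so the ``componentwise'' concern you raise does not arise; the contradiction with \eqref{hp phi sliding} then follows immediately from the existence of points $x\notin\Omega$ with $x+\bar\tau e_N\in\Omega$.
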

Thanks to the maximum principle in sets of small measure \cite[Proposition 2.2]{FeWa}, the proof of Theorem \ref{thm: sliding} is a straightforward adaptation of the local one, given in \cite{BeNi}, and hence is omitted.

\medskip

\noindent{\textbf{Basic definitions and notations:}} we start recalling the definition of classical solution.

\begin{definition}
A continuous function $u : \R^N \to \R$ is a \emph{classical solution} to
\begin{equation}\label{generic equation}
(-\Delta)^s u = h \quad \text{in $\tilde \Omega$}, \quad u = g \quad \text{in $\R^N \setminus \tilde \Omega$}
\end{equation}
 if $(-\Delta)^s u(x)$ is well defined and equal to $h(x)$ for every $x \in \tilde\Omega$, and $u = g$ a.e. in $\R^N \setminus \tilde\Omega$.   
\end{definition}

Notice that, beyond the continuity, no boundary regularity is required on $u$. It is well known that a sufficient condition to ensure that $(-\Delta)^s u$ is well defined (and actually continuous) in an open set $U \subset \R^N$ is that $u \in \C^{2s +\eps}(U)$ for some $\eps>0$ (i.e. $u \in \C^{0,2s+\eps}(U)$ if $s < 1/2$, or $u \in  \C^{1,2s+\eps-1}(U)$ if $s \ge 1/2$), see \cite[Proposition 2.4]{SilCPAM}.

For future convenience, we recall here also the definition of viscosity solution (see \cite[Definition 2.2]{CafSilCPAM}.

\begin{definition}
A function $u: \R^N \to \R$, upper (resp. lower) semicontinuous is said to be a \emph{viscosity sub-solution} (resp. \emph{viscosity super-solution}) to \eqref{generic equation}, if $u \le g$ (resp $u \ge g$) a.e. in $\R^N \setminus \tilde \Omega$, and if every time all the following happen:
\begin{itemize}
\item $x_0 \in \tilde \Omega$;
\item $N$ is a neighbourhood of $x_0 \in \tilde \Omega$;
\item $\phi$ is some $\C^2$ function in $N$; 
\item $\phi(x_0) = u(x_0)$; 
\item $u(x) < \phi(x)$ (resp. $u(x) > \phi(x)$) for every $x \in N \setminus \{x_0\}$;  
\end{itemize}
then if we let 
\[
v:= \begin{cases} \phi & \text{in $N$} \\ u & \text{in $\R^N \setminus N$}, \end{cases} 
\]
we have $(-\Delta)^s v (x_0) \le h(x_0)$ (resp. $(-\Delta)^s v(x_0) \ge h(x_0)$). A function is a viscosity solution if it both a viscosity sub- and super-solution.
\end{definition}

We adopt in the rest of the paper a mainly standard notation. The ball of center $x$ and radius $r$ is denoted by $B_r(x)$, and in the frequent case $x=0$ we simply write $B_r$. The letter $C$ always denotes a positive constant, whose precise value is allowed to change from line to line.  

\medskip

\noindent \textbf{Organization of the paper:} 
Section~\ref{8iujn9ijn7yhbnikjm} deals with
the maximum principle in unbounded domains, providing the proof
of Theorem \ref{thm: super max}.

Then, Section~\ref{sec: boundary}
is devoted to the
boundary regularity in sets satisfying an exterior cone condition, and contains the
proof of Theorem \ref{thm: boundary regularity}.

The monotonicity in globally Lipschitz epigraphs
and the proof of Theorem \ref{thm: main 1} are dealt with
in Section~\ref{sec: thm 1}, while
the monotonicity of solutions in coercive epigraphs, with the 
proof of Theorem \ref{thm: main 2}, is the subject of Section~\ref{8uhg6tr4rtgbifdfghjAHJ}.

Finally, in Section~\ref{sec: overdet}, we consider
overdetermined problems and we prove
Theorem~\ref{thm: overdet}.

\section{Maximum principle in unbounded domains}\label{8iujn9ijn7yhbnikjm}

We devote this section to the proof of Theorem \ref{thm: super max}.
To this aim, we start with some preliminary results. First of all, we notice that
balls centered at boundaries
of cones intersect the cones with mass proportional to that of the ball, namely:

\begin{lemma}\label{LEMMA CONE 1}
Let~$\alpha\in\left(0,\frac\pi4\right]$ and~${\mathcal{C}}=\{ x=(x',x_N)\in\R^N
{\mbox{ s.t. }} x_N\ge|x|\cos\alpha\}$. Let~$p\in\partial{\mathcal{C}}$. Then,
for any~$r>0$,
\begin{equation} \label{Brp}
|B_r(p)\cap {\mathcal{C}}|\ge\delta r^N,\end{equation}
for some~$\delta>0$, depending on~$N$ and~$\alpha$.
\end{lemma}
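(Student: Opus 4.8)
The plan is to reduce everything to the vertex case by exploiting that $\mathcal{C}$ is a \emph{convex} cone, which holds precisely because the half-opening $\alpha$ does not exceed $\pi/4$ (in fact $\pi/2$ would already suffice). Throughout, let $\Sigma := \Sigma_{e_N,\alpha}$ denote the open cone of axis $e_N=(0',1)$ and opening $\alpha$, as defined in Subsection~\ref{au45678IUHAUaJ13}. The first observation I would record is the inclusion $\Sigma \subseteq \mathcal{C}$: if $v\neq 0$ forms an angle less than $\alpha$ with $e_N$, then $v_N = |v|\cos\bigl(\angle(v,e_N)\bigr) > |v|\cos\alpha$, so $v\in\interior\mathcal{C}$ (indeed $\Sigma=\interior\mathcal{C}$, though only the inclusion is needed).

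Next I would use the elementary fact that a closed convex cone is closed under addition: since $\mathcal{C}$ is convex and invariant under positive dilations, for $a,b\in\mathcal{C}$ one has $\tfrac12(a+b)\in\mathcal{C}$ by convexity and hence $a+b=2\cdot\tfrac12(a+b)\in\mathcal{C}$; thus $\mathcal{C}+\mathcal{C}\subseteq\mathcal{C}$. Because $p\in\partial\mathcal{C}\subseteq\mathcal{C}$, combining this with $\Sigma\subseteq\mathcal{C}$ gives the key inclusion of the \emph{translated} cone
\[
p+\Sigma \;\subseteq\; \mathcal{C}.
\]

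It then remains to intersect with the ball and compute measures. Since $B_r(p)=p+B_r(0)$, we have $B_r(p)\cap(p+\Sigma)=p+\bigl(B_r(0)\cap\Sigma\bigr)$, hence
\[
B_r(p)\cap\mathcal{C} \;\supseteq\; p+\bigl(B_r(0)\cap\Sigma\bigr),
\]
and by translation invariance of Lebesgue measure together with the scaling $x\mapsto x/r$,
\[
|B_r(p)\cap\mathcal{C}| \;\ge\; |B_r(0)\cap\Sigma| \;=\; r^N\,|B_1(0)\cap\Sigma|.
\]
Setting $\delta := |B_1(0)\cap\Sigma|$ gives \eqref{Brp}; this quantity is strictly positive because $\Sigma$ is a nonempty open cone (here $\alpha>0$ enters), and it depends only on $N$ and $\alpha$. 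I would note that the argument is uniform in $p\in\partial\mathcal{C}$ and automatically includes the vertex $p=0$.

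I do not expect any serious obstacle here; the only point worth flagging is that the entire reduction to the vertex rests on $\mathcal{C}+\mathcal{C}\subseteq\mathcal{C}$, i.e.\ on convexity of $\mathcal{C}$, which is exactly where the hypothesis $\alpha\le\pi/4$ is used (and is more generous than necessary). For a non-convex cone ($\alpha>\pi/2$) this shortcut fails and one would instead have to exhibit an explicit interior ball centered at distance comparable to $r$ from $p$ and estimate by hand, but that case does not occur in the present statement.
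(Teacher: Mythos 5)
Your proof is correct, and it takes a genuinely different and in fact cleaner route than the paper's. The paper argues by contradiction and compactness: it normalizes $r=1$, takes a putative minimizing sequence $p_k\in\partial\mathcal{C}$ with $|B_1(p_k)\cap\mathcal{C}|\to 0$, shows $|p_k|$ must be bounded (via an explicit interior tangent ball when $|p|$ is large), extracts a convergent subsequence, passes to the limit by dominated convergence, and derives a contradiction with the Lipschitz regularity of $\partial\mathcal{C}$. You instead observe that $\mathcal{C}$ is a \emph{convex} cone (since $\alpha\le\pi/4<\pi/2$), hence $\mathcal{C}+\mathcal{C}\subseteq\mathcal{C}$, so the open translated cone $p+\Sigma_{e_N,\alpha}$ sits inside $\mathcal{C}$ for every $p\in\partial\mathcal{C}\subseteq\mathcal{C}$; intersecting with $B_r(p)$ and scaling then yields the bound with the explicit constant $\delta=|B_1(0)\cap\Sigma_{e_N,\alpha}|$. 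The steps all check out: the inclusion $\Sigma_{e_N,\alpha}\subseteq\mathcal{C}$, convexity of $\mathcal{C}$ (which uses $\cos\alpha\ge 0$ together with the triangle inequality), the cone-addition identity, and the translation/scaling invariance of Lebesgue measure. What your approach buys is elementarity and an explicit, constructive constant, avoiding compactness entirely; what the paper's softer argument buys is flexibility, in that it does not rely on convexity and would carry over to a rough non-convex cone as long as one can still bound the bad sequence, though as you note that generality is not needed here.
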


\begin{proof} Up to scaling, we can assume that~$r=1$, so
we want to show that, for any~$p\in\partial{\mathcal{C}}$,
\begin{equation} \label{Brp:XOCC}
|B_1(p)\cap {\mathcal{C}}|\ge\delta ,\end{equation}
for some~$\delta>0$. For a contradiction, we suppose
that~\eqref{Brp:XOCC} is false, namely there exists a sequence~$p_k\in
\partial{\mathcal{C}}$ for which
\begin{equation} \label{Brp:XOCC:2} 
|B_1(p_k)\cap {\mathcal{C}}|\le\frac1k.\end{equation}
One can see that~$|p_k|$ needs to be bounded
(indeed, if~$p\in \partial{\mathcal{C}}$
and~$|p|$ is large enough, then
the ball of radius~$1/4$ and tangent from the inside to~$p$
lies in~$B_1(p)\cap {\mathcal{C}}$).
Therefore,
up to a subsequence, we have that~$p_k\to\bar p$,
for some $\bar p\in\partial{\mathcal{C}}$.
Hence, by using the Dominated Convergence Theorem,
we can pass~\eqref{Brp:XOCC:2} to the limit as~$k\to+\infty$
and obtain that
$$ |B_1(\bar p)\cap {\mathcal{C}}|=0,$$
which is a contradiction with the Lipschitz regularity of the cone.

For a different proof, see~\cite{OLD}.
\end{proof}

Here is another auxiliary results concerning the geometry of cones:

\begin{lemma}\label{LEMMA CONE 2}
Let~$\alpha\in\left(0,\frac\pi4\right]$ and~${\mathcal{C}}$ as in Lemma~\ref{LEMMA CONE 1}.

Let~$q\in\R^N\setminus {\mathcal{C}}$. Let~$r:=2\,{\rm dist}\,(q, {\mathcal{C}})$.
Then,
\begin{equation*} 
|B_r(q)\cap {\mathcal{C}}|\ge\delta r^N,\end{equation*}
for some~$\delta>0$, depending on~$N$ and~$\alpha$.
\end{lemma}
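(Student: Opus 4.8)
The plan is to deduce Lemma~\ref{LEMMA CONE 2} from Lemma~\ref{LEMMA CONE 1} by finding a suitable boundary point of the cone inside $B_r(q)$ and then comparing the two balls. First I would let $q\in\R^N\setminus{\mathcal{C}}$ and set $d:={\rm dist}(q,{\mathcal{C}})>0$, so that $r=2d$. Pick a point $p\in\partial{\mathcal{C}}$ realizing the distance, i.e. $|q-p|=d$ (such a point exists since ${\mathcal{C}}$ is closed and nonempty, and the nearest point of a closed set to an exterior point lies on the topological boundary). Then $B_d(p)\subset B_{2d}(q)=B_r(q)$: indeed, if $x\in B_d(p)$ then $|x-q|\le|x-p|+|p-q|<d+d=2d$. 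Hence
\[
|B_r(q)\cap{\mathcal{C}}|\ge |B_d(p)\cap{\mathcal{C}}|.
\]

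Second, I would apply Lemma~\ref{LEMMA CONE 1} with the radius $d$ and the boundary point $p\in\partial{\mathcal{C}}$, which gives $|B_d(p)\cap{\mathcal{C}}|\ge\delta' d^N$ for some $\delta'>0$ depending only on $N$ and $\alpha$. Combining with the previous inclusion,
\[
|B_r(q)\cap{\mathcal{C}}|\ge\delta' d^N=\delta'\Bigl(\tfrac r2\Bigr)^N=\frac{\delta'}{2^N}\,r^N=:\delta\,r^N,
\]
which is the desired estimate, with $\delta=\delta'/2^N$ still depending only on $N$ and $\alpha$. This completes the argument.

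As for difficulties: there is essentially no hard step here, since the heavy lifting (the uniform lower bound on $|B_r(p)\cap{\mathcal{C}}|$ for $p$ on the boundary) was already done in Lemma~\ref{LEMMA CONE 1}. The only mild point that deserves a line of justification is that the nearest point $p$ of the closed convex cone ${\mathcal{C}}$ to an exterior point $q$ actually belongs to $\partial{\mathcal{C}}$ and not to the interior of ${\mathcal{C}}$ — but this is immediate, because if $p$ were interior then a small ball around $p$ would lie in ${\mathcal{C}}$, and moving from $p$ slightly toward $q$ would produce a point of ${\mathcal{C}}$ strictly closer to $q$, contradicting minimality. One should also note that $d>0$ precisely because $q\notin{\mathcal{C}}$ and ${\mathcal{C}}$ is closed, so that $r>0$ and the scaling in the final display is legitimate.
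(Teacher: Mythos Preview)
Your proof is correct and follows exactly the same approach as the paper: pick a nearest point $p\in\partial\mathcal{C}$, observe that $B_{r/2}(p)\subseteq B_r(q)$ by the triangle inequality, and apply Lemma~\ref{LEMMA CONE 1}. The only difference is cosmetic (you write $d$ for $r/2$ and add a line justifying $p\in\partial\mathcal{C}$), but the argument is identical.
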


\begin{proof} Let~$p\in \partial {\mathcal{C}}$ be such that
$$ |p-q|= {\rm dist}\,(q, {\mathcal{C}})=\frac{r}{2}.$$
We claim that
\begin{equation}\label{JAK:AKaA}
B_{r/2}(p)\subseteq B_r(q).
\end{equation}
Indeed, if~$x\in B_{r/2}(p)$ then
$$ |x-q|\le |x-p|+|p-q|< \frac{r}{2}+\frac{r}{2}=r.$$
Then, \eqref{JAK:AKaA} and Lemma~\ref{LEMMA CONE 1}
imply the desired result.
\end{proof}

We are now in position to complete the proof of Theorem \ref{thm: super max}.

\begin{proof}[Completion of the proof of Theorem \ref{thm: super max}]
We consider~$z^+:=\max\{z,0\}$. We claim that
\begin{equation}\label{VIS:GL}
(-\Delta)^s z^+ -c(x)z^+\le0 \qquad \text{in $\R^N$},
\end{equation}
in the viscosity sense.

To prove this, let~$\phi$ be a smooth function touching~$z^+$ from
above at some point~$x_0$. We have two cases: either~$z(x_0)>0$
or~$z(x_0)\le0$.

Suppose first that~$z(x_0)>0$. Then~$x_0\in D$ and~$z(x_0)=z^+(x_0)$.
Accordingly,
$$ \phi(x)\ge z^+(x)\ge z(x) \quad{\mbox{ and }}\quad
\phi(x_0)=z^+(x_0)=z(x_0),$$
that is,
$\phi$ touches~$z$ from
above at~$x_0\in D$. Thus, by~\eqref{EQ:z}, we have that
$(-\Delta)^s \phi(x_0) - c(x_0) \phi(x_0) \le 0 $.

Now we consider the case in which~$z(x_0)\le0$.
Then
$$ \phi(x)\ge z^+(x)\ge 0 \quad{\mbox{ and }}\quad
\phi(x_0)=z^+(x_0)=0.$$
As a consequence, $\phi$ has a minimum at~$x_0$ and therefore~$(-\Delta)^s
\phi(x_0)\le0$. Accordingly, we have that
$$ (-\Delta)^s \phi(x_0) - c(x_0) \phi(x_0) \le 0+0=0.$$
This completes the proof of~\eqref{VIS:GL}.

{F}rom~\eqref{VIS:GL} and the fact that~$c\le0$ we conclude that
\begin{equation}\label{VIS:GL:2}
(-\Delta)^s z^+\le0 \qquad \text{in~$\R^N$},
\end{equation}
in the viscosity sense.

Now, in order to complete the proof of
Theorem \ref{thm: super max}, we want to show that~$z^+$ vanishes identically.
Suppose not: then there exists~$q_o\in \R^N$ such that~$z^+(q_o)>0$.
Then, recalling that $z$ is bounded from above, we set
$$ A:=\sup_{\R^N} z^+ \ge z^+(q_o)>0,$$
and we take a maximizing sequence~$q_j\in\R^N$ such that
\begin{equation}\label{JA9} \lim_{j\to+\infty} z^+(q_j)=A.\end{equation}
Of course, up to neglecting a finite number of indices, we may suppose that~$z^+(q_j)\ge A/2>0$.
Hence, since, by~\eqref{EQ:z}, we know that~$z\le0$ outside $D$, we have that~$q_j\in D$.
We denote by~${\mathcal{C}}$ a cone that lies outside~$D$
(whose existence is warranted by assumption). Then we have that~$q_j\in \R^N\setminus
{\mathcal{C}}$, and we set
$$ r_j:=2\,{\rm dist}\,(q_j,{\mathcal{C}})>0.$$
Notice also that~${\mathcal{C}}\subseteq \R^N\setminus D\subseteq\{z^+=0\}$.
So,
by Lemma~\ref{LEMMA CONE 2}, we know that
$$ \delta r_j^N \le |B_{r_j}(q_j)\cap {\mathcal{C}}|\le
|B_{r_j}(q_j)\cap\{z^+=0\}|,$$
for some~$\delta>0$.

Thus,
we are\footnote{In order to apply Corollary 4.5 in \cite{MR2244602}, we observe that assumptions (4.7), (4.8) and (4.10) therein have been already verified. As far as (4.9) is concerned, we recall that as exponent $\eta>0$ associated to $(-\Delta)^s$ we have to take a positive small number (see \cite[Section 2]{MR2244602}). Then, for any $x \in \R^N \setminus B_{r_j}(q_j)$, it results that
\[
A \left( 2 \left|2 \frac{x-q_j}{r_j}\right|^\eta-1 \right) \ge A \left(2^{1+\eta} -1 \right) \ge A \ge z^+(x),
\]
i.e. assumption (4.9) holds.}
in the position of
applying~\cite[Corollary 4.5]{MR2244602}
for $z^+$ in each ball $B_{r_j}(q_j)$, and we obtain that~$z^+\le(1-\gamma)A$ in~$B_{r_j/2}(q_j)$,
for some~$\gamma\in(0,1)$. But this says that
$$ z^+(q_j)\le (1-\gamma)A$$
and so, taking the limit and recalling~\eqref{JA9}, we obtain~$A\le(1-\gamma)A$,
which is a contradiction.
\end{proof}

We conclude this section recalling the strong maximum principle for the fractional Laplacian, whose simple proof is omitted for the sake of brevity.

\begin{proposition}[Strong maximum principle]\label{prop: strong}
Let~$\Omega \subset \R^N$ be an open set, neither necessarily unbounded, 
nor connected. Let $w$ be a classical solution to 
\[
\begin{cases}
(-\Delta)^s w + c(x) w \ge 0 & \text{in $\Omega$}, \\
w \ge 0 & \text{in $\R^N$}, 
\end{cases}
\]
with $c \in L^\infty(\Omega)$ and $c w \in \mathcal{C}(\Omega)$. Then either $w>0$, or $w \equiv 0$ in $\R^N$.
\end{proposition}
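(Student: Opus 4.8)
The plan is to argue by contradiction, exploiting the elementary but decisive observation that an interior zero of $w$ is automatically a global minimum of $w$ over $\R^N$, and that the pointwise value of $(-\Delta)^s w$ at a global minimum has a definite sign. So I would suppose that $w\not\equiv 0$ in $\R^N$ and assume, aiming at a contradiction, that $w(x_0)=0$ for some $x_0\in\Omega$; the goal is then to show this is impossible, which forces $w>0$ throughout $\Omega$.

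Since $w\ge 0$ on all of $\R^N$ and $w(x_0)=0$, the point $x_0$ is a global minimum of $w$, so that
\[
(-\Delta)^s w(x_0)=c_{N,s}\,\pv\!\int_{\R^N}\frac{w(x_0)-w(y)}{|x_0-y|^{N+2s}}\,dy=-\,c_{N,s}\!\int_{\R^N}\frac{w(y)}{|x_0-y|^{N+2s}}\,dy\;\le\;0,
\]
where the integral is well defined and finite because $w$ is a \emph{classical} solution (so $(-\Delta)^s w(x_0)$ exists) and its integrand has constant sign. Next I would observe that the zeroth-order term vanishes at $x_0$: as $w$ is continuous with $w(x_0)=0$ and $c\in L^\infty(\Omega)$, evaluating the continuous representative of $cw$ along a sequence of Lebesgue points of $c$ tending to $x_0$ gives $(cw)(x_0)=0$. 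Inserting this into the differential inequality at $x_0$ yields $0\le(-\Delta)^s w(x_0)+(cw)(x_0)=(-\Delta)^s w(x_0)\le 0$, hence $(-\Delta)^s w(x_0)=0$. By the displayed identity this forces $\int_{\R^N}|x_0-y|^{-N-2s}\,w(y)\,dy=0$, and since $w\ge 0$ we conclude $w=0$ a.e., so $w\equiv 0$ by continuity — contradicting the assumption. Therefore either $w>0$ everywhere in $\Omega$, or $w\equiv 0$ in $\R^N$.

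I expect no genuine obstacle here — this is precisely why the paper labels the proof ``simple''. The single delicate point is the meaning of the inequality $(-\Delta)^s w+c(x)w\ge 0$ when $c$ is only bounded and measurable; this is exactly what the hypothesis $cw\in\mathcal{C}(\Omega)$ is for, since it legitimizes the pointwise evaluation $(cw)(x_0)=0$. Note also that neither connectedness nor boundedness of $\Omega$ enters the argument, because the whole computation takes place at the single interior point $x_0$, and it is the nonlocality of $(-\Delta)^s$ that automatically propagates the vanishing from $x_0$ to all of $\R^N$.
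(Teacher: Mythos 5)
Your argument is correct. Note that the paper deliberately omits a proof of this proposition (``whose simple proof is omitted for the sake of brevity''), so there is no internal proof to compare against; what you wrote is precisely the standard argument the authors have in mind: at an interior zero $x_0$ of $w$, the nonnegativity of $w$ on $\R^N$ makes $x_0$ a global minimum, so that the pointwise formula for the fractional Laplacian gives $(-\Delta)^s w(x_0)\le 0$ with equality if and only if $w\equiv 0$, while the zeroth-order term drops out because $(cw)(x_0)=0$; combining with the inequality $(-\Delta)^s w(x_0)+(cw)(x_0)\ge 0$ forces $w\equiv 0$.

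Two small remarks. First, your Lebesgue-point detour to show $(cw)(x_0)=0$ is unnecessarily elaborate: since $|c\,w|\le\|c\|_{L^\infty(\Omega)}\,|w|$ a.e.\ near $x_0$ and $w$ is continuous with $w(x_0)=0$, the a.e.-defined product tends to $0$ along any sequence approaching $x_0$, so its continuous representative (whose existence is the hypothesis $cw\in\mathcal{C}(\Omega)$) must vanish at $x_0$. Second, you are right that the integral
\[
\int_{\R^N}\frac{w(y)}{|x_0-y|^{N+2s}}\,dy
\]
is finite: since the integrand has constant sign, the existence of the principal value (part of the definition of classical solution) is equivalent to absolute convergence, which is the detail that justifies passing from ``integral equals zero'' to ``$w=0$ a.e.'' and hence, by continuity, $w\equiv 0$. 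Your closing observation --- that neither connectedness nor boundedness of $\Omega$ is needed because the nonlocal kernel propagates the vanishing from a single interior point to all of $\R^N$ --- is exactly the structural reason the authors can state the result in this generality.
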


\section{Boundary regularity for the fractional Laplacian
in sets satisfying an exterior cone condition}\label{sec: boundary}

In this section we analyze the global regularity of solutions to the boundary 
value problem \eqref{p_r} and we prove
Theorem \ref{thm: boundary regularity}. Here we will assume that~$
g \in L^\infty(\Omega)$ and $\Omega$ is a set satisfying the uniform exterior 
cone condition with opening $\theta$, as defined in Subsection~\ref{au45678IUHAUaJ13}.

We recall that, for $e \in \S^{N-1}$ and $\theta \in [0,\pi]$, we denote by $\Sigma_{e,\theta}$ the open cone of rotation axis $\R e$ and opening $\theta$. In particular, if $e=e_N$, then 
\[
\Sigma_{e_N,\theta} = \begin{cases} \{x \in \R^N: |x'| <  (\tan{\theta}) x_N\} & \text{if }\theta \in \left[0,\frac{\pi}{2}\right), \\
\{x \in \R^N: x_N >0\} & \text{if } \theta=\frac{\pi}{2}, \\
\{x \in \R^N: |x'| > (\tan{\theta}) x_N\}  & \text{if }\theta \in \left(\frac{\pi}{2},\pi\right].
\end{cases} 
\]
In this framework,
Theorem \ref{thm: boundary regularity} follows as a corollary of the next intermediate statement: 

\begin{proposition}\label{prop: 4.1 in BCN} 
Let~$u\in L^\infty(\R^N)$ be a solution of~\eqref{p_r}.
Then, there exist~$\alpha \in (0,s)$ and $C>0$ depending only on $\theta$ such that 
\begin{equation}\label{7899oiHYGFAGHJhgfghj11OA}
|u(x)| \le C\,\big(1+\|u\|_{L^\infty(\R^N)}\big) \,\left( 1+  \|g\|_{L^\infty(\Omega
)}^{\frac{\alpha}{2s-2\alpha}}\right)\;  \dist(x,\pa \Omega)^\alpha  
\end{equation}
for all $x \in \Omega$. 
\end{proposition}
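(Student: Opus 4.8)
The plan is to establish the weighted pointwise bound~\eqref{7899oiHYGFAGHJhgfghj11OA} by comparing~$u$ with a family of barriers built out of the homogeneous $s$-harmonic functions in cones of~\cite{BanBog}; combined with the interior regularity for~$(-\Delta)^s u=g\in L^\infty$ (recall that~$\alpha<s<2s$, so the interior Hölder exponent is better than~$\alpha$) and a routine covering argument, this also yields Theorem~\ref{thm: boundary regularity} and~\eqref{ST:T}. As a first reduction, note that the right-hand side of~\eqref{7899oiHYGFAGHJhgfghj11OA} dominates~$\|u\|_{L^\infty(\R^N)}\ge|u(x)|$ as soon as~$d:=\dist(x,\pa\Omega)$ exceeds a fixed constant, so it suffices to treat~$x\in\Omega$ with~$d$ small; fix such an~$x$, pick~$x_0\in\pa\Omega$ realizing~$|x-x_0|=d$, and translate so that~$x_0=0$. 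By the uniform exterior cone condition there is~$e\in\S^{N-1}$ with~$\overline{\Sigma_{e,\theta}}\subseteq\R^N\setminus\Omega$ and tangent to~$\Omega$ at~$0$; hence~$u\equiv 0$ on~$\Sigma_{e,\theta}$ and~$\Omega\subseteq\overline{\mathcal K_\theta}$, where~$\mathcal K_\theta:=\R^N\setminus\overline{\Sigma_{e,\theta}}$ is the open cone of opening~$\pi-\theta>\pi/2$ (here one uses~$\theta<\pi/2$). It is enough to bound~$u(x)$ from above, since applying the argument to~$(-u,-g)$ controls~$-u(x)$.

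The nonlocal input from~\cite{BanBog} is a function~$\Phi=\Phi_\theta$, positive and $s$-harmonic in~$\mathcal K_\theta$, vanishing on~$\R^N\setminus\mathcal K_\theta$, and homogeneous of some degree~$\alpha=\alpha(\theta,s)$. Since~$\mathcal K_\theta$ strictly contains a half-space, for which the corresponding homogeneity exponent equals~$s$, the monotone dependence of this exponent on the aperture, \cite[Lemma~3.3]{BanBog}, gives~$\alpha\in(0,s)$ and, at the same time, the monotonicity of~$\theta\mapsto\alpha(\theta,s)$ asserted in Theorem~\ref{thm: boundary regularity} (see Remark~\ref{rem: on monotonicity in theta}). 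Rescaling~$\Phi_\theta$ and combining it with its truncations and with the torsion function of the ball, one assembles, for each scale~$R>0$, a ``wall of barriers'': a supersolution~$\Psi_R\ge 0$ in~$\mathcal K_\theta\cap B_R$ with~$\Psi_R\equiv 0$ on~$\overline{\Sigma_{e,\theta}}$, $\Psi_R\ge 1$ on~$\R^N\setminus B_R$, and~$\Psi_R(y)\le C(|y|/R)^\alpha$ for~$|y|\le R/2$; and a function~$V_R\ge 0$ with~$(-\Delta)^s V_R\ge 1$ in~$\mathcal K_\theta\cap B_R$, $V_R\equiv 0$ outside, and~$V_R(y)\le C R^{2s-\alpha}|y|^\alpha$ near~$0$ (the bound near the vertex being obtained by iterating the comparison with the ball's torsion function, where one uses~$\alpha<2s$). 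Here~$C=C(\theta,s,N)$.

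With these in hand, fix~$R\in[2d,R_0]$ and put~$W:=\|u\|_{L^\infty(\R^N)}\,\Psi_R+\|g\|_{L^\infty(\Omega)}\,V_R$. In the bounded set~$\Omega\cap B_R\subseteq\mathcal K_\theta\cap B_R$ one has~$(-\Delta)^s W\ge\|g\|_{L^\infty(\Omega)}\ge g=(-\Delta)^s u$; on~$\R^N\setminus\Omega$ one has~$u=0\le W$; and on~$\Omega\setminus B_R$ one has~$u\le\|u\|_{L^\infty(\R^N)}\le\|u\|_{L^\infty(\R^N)}\Psi_R\le W$. Hence the comparison principle for~$(-\Delta)^s$ in the bounded open set~$\Omega\cap B_R$ yields~$u\le W$ there, and evaluating at~$x$ (where~$|x|=d\le R/2$) gives
\[
u(x)\le C\Big(\|u\|_{L^\infty(\R^N)}R^{-\alpha}+\|g\|_{L^\infty(\Omega)}R^{2s-\alpha}\Big)\,d^{\alpha}.
\]
Choosing~$R$ to balance the two terms in the bracket (and taking~$R=R_0$ when~$\|g\|_{L^\infty(\Omega)}$ is so small that the balancing value exceeds~$R_0$) turns this into~\eqref{7899oiHYGFAGHJhgfghj11OA}, the exponent~$\tfrac{\alpha}{2s-2\alpha}$ on~$\|g\|_{L^\infty(\Omega)}$ being exactly what this optimization produces.

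The whole difficulty is concentrated in the construction of the barriers. In contrast with the local case one has no explicit local barriers at hand; the comparison functions must be defined on all of~$\R^N$, must carry the exterior datum on the entire complement of the domain rather than only along its boundary, and their nonlocal tails must be controlled so that~$W$ beats~$u$ on~$\R^N\setminus B_R$ in \emph{every} direction — in particular in directions close to the lateral boundary of the exterior cone, where the bare homogeneous function~$\Phi_\theta$ degenerates. Producing the wall of barriers with the properties listed above, and extracting from~\cite{BanBog} the sharp vertex decay exponent~$\alpha$ together with its monotonicity in~$\theta$, is the main obstacle; once this is in place, the comparison principle and the scaling optimization are routine.
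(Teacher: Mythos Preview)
Your high-level plan coincides with the paper's: reduce to a boundary point, invoke the $\alpha$-homogeneous $s$-harmonic function from \cite{BanBog} in the complementary cone, build a barrier at scale~$R$, apply the comparison principle, and then optimize in~$R$ to produce the exponent~$\tfrac{\alpha}{2s-2\alpha}$. The difference is that you split the barrier into two pieces~$\Psi_R$ (absorbing~$\|u\|_{L^\infty}$) and~$V_R$ (absorbing~$\|g\|_{L^\infty}$), whereas the paper uses a single barrier that does both jobs at once. The two-piece scheme is natural, but the construction of~$V_R$ with the claimed vertex decay~$V_R(y)\le CR^{2s-\alpha}|y|^\alpha$ is exactly the hard point, and your hint ``iterating the comparison with the ball's torsion function'' does not close: at each dyadic step the torsion of~$B_{R2^{-k}}$ contributes a non-decaying error of order~$(R2^{-k})^{2s}$, while the nonlocal tail forces you to dominate~$V_R$ on \emph{all} outer annuli (where it is only known to be~$\lesssim a_j$, not~$\lesssim a_k$), and the resulting recursion $a_{k+1}\le \gamma\,a_k+C(R2^{-k})^{2s}$ comes with~$\gamma=2^\alpha C_0/c_0$ determined by the oscillation of the homogeneous profile on the sphere, with no reason for~$\gamma<1$. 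In effect, proving the vertex decay of~$V_R$ is equivalent to the proposition itself applied to the torsion function, so the argument becomes circular unless one builds the barrier explicitly.

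The paper resolves this with a different device. It takes~$v$ the homogeneous function for a cone~$\Sigma_{e_N,\theta_2}$ that is \emph{strictly larger} than the complement~$\Sigma_{e_N,\theta_1}$ of the exterior cone (this is the fix for the degeneration you flag: on~$\overline{\Sigma_{e_N,\theta_1}}\cap\S^{N-1}$ the function~$v$ is bounded below, see~\eqref{continuity estimate}). Then, instead of pairing a supersolution with a separate torsion term, it sets~$z_R:=2R^{-\alpha}v-v^2$ and uses the identity
\[
(-\Delta)^s(v^2)(x)=-\int_{\R^N}\frac{(v(x)-v(y))^2}{|x-y|^{N+2s}}\,dy\qquad(\text{since }(-\Delta)^s v=0),
\]
together with the ``fractional Euler'' lower bound of Lemma~\ref{lem: frac Euler}, to obtain~$(-\Delta)^s z_R\ge C|x|^{2\alpha-2s}$ in~$\Sigma_{e_N,\theta_1}$ (Lemma~\ref{lem: barrier 1}). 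This right-hand side is \emph{singular} at the vertex, so a single choice of~$R$ in~\eqref{def R2} makes~$(-\Delta)^s w_R$ dominate~$\|g\|_{L^\infty}$ throughout~$\Omega\cap B_R$ — no separate~$V_R$ is needed. The remaining issue, that~$z_R$ becomes negative far away and hence cannot beat~$u$ on~$\R^N\setminus B_R$, is handled by truncating~$z_R$ to~$1$ on the superlevel set~$D_R=\{v\ge t_2\}$ (Lemma~\ref{lem: barrier 2}); since~$D_R\subset\R^N\setminus B_{c/R}$, the truncation error in~$(-\Delta)^s$ is~$O(R^{2s-2\alpha})$, which is absorbed by the singular lower bound. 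With the barrier~$w_R$ in hand, the comparison with~$\tilde u=u/\max\{\|u\|_{L^\infty},1\}$ and the choice of~$R$ in~\eqref{def R2} give~\eqref{7899oiHYGFAGHJhgfghj11OA} exactly along the lines you sketched.
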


The proof of Proposition \ref{prop: 4.1 in BCN} is divided into several lemmas. We produce a wall of upper barriers for $u$, whose construction is inspired by \cite[Lemma 4.1]{BCNCPAM}. Major difficulties arise in our setting in order to compute the fractional Laplacian of the barriers. 

In the next lemma we establish an estimate for $\alpha$-homogeneous functions, which can be seen as a one-side counterpart of the classical Euler formula. 

\begin{lemma}\label{lem: frac Euler}
Let $\alpha>0$, $0<\theta_1 <\theta_2< \pi$, and let $v \in \mathcal{C}^1_{\loc}(\Sigma_{e_N,\theta_2})$ be a positive $\alpha$-homogeneous function in $\Sigma_{e_N; \theta_2}$, non-negative in the whole space $\R^N$. Then,
there exists $C>0$ depending on $\theta_1$, $\theta_2$ and $v$  such that
\begin{equation}\label{Euler}
\int_{\R^N} \frac{(v(x)-v(y))^2}{|x-y|^{N+2s}}\,dy \ge C |x|^{2\alpha-2s} \qquad \text{for every $x \in \Sigma_{e_N,\theta_1}$}.
\end{equation}
\end{lemma}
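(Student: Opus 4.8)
The plan is to reduce \eqref{Euler}, via the $\alpha$-homogeneity and a scaling argument, to a uniform positive lower bound for a Gagliardo-type integral over the slice of the unit sphere sitting inside the smaller cone, and then to rule out the degenerate case by a rigidity remark: a positive $\alpha$-homogeneous function with $\alpha>0$ cannot be constant on any open ball.

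\textbf{Step 1: localization and scaling.} Since $\theta_1<\theta_2$, the compact set $\S^{N-1}\cap\overline{\Sigma_{e_N,\theta_1}}$ is disjoint from the closed set $\R^N\setminus\Sigma_{e_N,\theta_2}$, so
\[
\rho_0:=\tfrac12\min\Big\{1,\ \dist\big(\S^{N-1}\cap\overline{\Sigma_{e_N,\theta_1}},\,\R^N\setminus\Sigma_{e_N,\theta_2}\big)\Big\}>0 ,
\]
and $B_{\rho_0}(\xi)\subset\Sigma_{e_N,\theta_2}\setminus\{0\}$ for every $\xi\in\S^{N-1}\cap\overline{\Sigma_{e_N,\theta_1}}$. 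Hence $K:=\bigcup_{\xi}\overline{B_{\rho_0}(\xi)}$ is a compact subset of $\Sigma_{e_N,\theta_2}$, so $v$ is of class $\mathcal{C}^1$ near $K$ and $|v(\xi)-v(\xi+w)|\le L|w|$ for $\xi\in\S^{N-1}\cap\overline{\Sigma_{e_N,\theta_1}}$ and $w\in B_{\rho_0}$, with $L:=\sup_K|\nabla v|<\infty$. Now fix $x\in\Sigma_{e_N,\theta_1}$, $x\neq0$ (the case $x=0$, when meaningful, being trivial), put $\hat x:=x/|x|$, and note $B_{\rho_0|x|}(x)=|x|B_{\rho_0}(\hat x)\subset\Sigma_{e_N,\theta_2}$, so $v(y)=|x|^\alpha v(y/|x|)$ there and $v(x)=|x|^\alpha v(\hat x)$. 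Discarding the complement of $B_{\rho_0|x|}(x)$ and substituting $y=|x|z$,
\[
\int_{\R^N}\frac{(v(x)-v(y))^2}{|x-y|^{N+2s}}\,dy\ \ge\ \int_{B_{\rho_0|x|}(x)}\frac{(v(x)-v(y))^2}{|x-y|^{N+2s}}\,dy\ =\ |x|^{2\alpha-2s}\,J(\hat x),
\]
where $J(\xi):=\int_{B_{\rho_0}(\xi)}\frac{(v(\xi)-v(z))^2}{|\xi-z|^{N+2s}}\,dz$. It thus suffices to show $m:=\inf\big\{J(\xi):\ \xi\in\S^{N-1}\cap\overline{\Sigma_{e_N,\theta_1}}\big\}>0$, which then yields \eqref{Euler} with $C=m$.

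\textbf{Step 2: $J$ is finite, continuous and strictly positive.} Substituting $z=\xi+w$, $J(\xi)=\int_{B_{\rho_0}}\frac{(v(\xi)-v(\xi+w))^2}{|w|^{N+2s}}\,dw$; the bound above makes the integrand dominated by $L^2|w|^{2-N-2s}$, which is integrable on $B_{\rho_0}$ since $s<1$. Hence $J(\xi)<\infty$, and, by the same domination and the continuity of $v$, the Dominated Convergence Theorem gives that $\xi\mapsto J(\xi)$ is continuous on the compact set $\S^{N-1}\cap\overline{\Sigma_{e_N,\theta_1}}$. Moreover, $J(\xi)=0$ would force $v$ to coincide a.e., hence (by continuity) everywhere, with the constant $v(\xi)$ on $B_{\rho_0}(\xi)$; but if a positive $\alpha$-homogeneous function equals $c>0$ on an open set $U$, then for $y\in U$ and $\lambda$ close to $1$ (so that $\lambda y\in U$) one has $c=v(\lambda y)=\lambda^\alpha v(y)=\lambda^\alpha c$, i.e.\ $\lambda^\alpha\equiv1$ on an interval, which is impossible for $\alpha>0$. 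Therefore $J>0$ on $\S^{N-1}\cap\overline{\Sigma_{e_N,\theta_1}}$, and by compactness $m>0$.

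\textbf{Main difficulty.} The argument is mostly scaling bookkeeping and a routine dominated-convergence estimate; the two points that need care are the uniform (over the compact spherical cap) choice of the radius $\rho_0$ and of the Lipschitz constant $L$ — this is exactly where the strict inequality $\theta_1<\theta_2$ enters — and the rigidity fact that a positive $\alpha$-homogeneous function cannot be locally constant, which is what prevents the spherical integral $J$ from degenerating to zero.
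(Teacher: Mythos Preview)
Your proof is correct and follows essentially the same scheme as the paper: reduce via $\alpha$-homogeneity and scaling to a positive lower bound on the unit sphere cap $\S^{N-1}\cap\overline{\Sigma_{e_N,\theta_1}}$, then use compactness together with the fact that a positive $\alpha$-homogeneous function cannot be locally constant. The only differences are cosmetic---the paper localizes the integral to the annular sector $(B_2\setminus B_{1/2})\cap\Sigma_{e_N,\theta_1}$, applies Fatou to a minimizing sequence instead of proving continuity of $J$ via dominated convergence, and phrases the rigidity step through Euler's identity $\nabla v(x)\cdot x=\alpha v(x)>0$ rather than your direct scaling argument---but the logical structure is identical.
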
 

\begin{proof}
For any $A\subseteq\R^N$, we define
$$ {\mathcal{F}}_A (x):=\int_A \frac{\big(v(x)-v(y)
\big)^2}{|x-y|^{N+2s}}\,dy.$$
For any $t>0$, the fact that $v$ is $\alpha$-homogeneous
and the substitution $z:= y/t$ imply that
\begin{eqnarray*}
&& {\mathcal{F}}_{\R^N} (tx_o) = 
\int_{\R^N} \frac{\big(t^\alpha v(x_o)-v(y)
\big)^2}{|tx_o-y|^{N+2s}}\,dy \\
&&\qquad\qquad= t^{2\alpha-2s}\,
\int_{\R^N} \frac{\big(v(x_o)-v(z)
\big)^2}{|x_o-z|^{N+2s}}\,dz = t^{2\alpha-2s}\,{\mathcal{F}}_{\R^N} (x_o)
.\end{eqnarray*}
In particular, by taking $x_o:=x/|x|$ and $t:=|x|$, we obtain that, for any $x\in
\Sigma_{e_N,\theta_1}$,
\begin{equation}\label{yuzy}
{\mathcal{F}}_{\R^N} (x) = |x|^{2\alpha-2s}\,{\mathcal{F}}_{\R^N}\left( \frac{x}{|x|}\right).
\end{equation}
Now we set ${\mathcal{A}}:= B_2\setminus B_{1/2}$ and
$$ \mu:=\inf_{x\in{{\mathcal{A}} \cap \Sigma_{e_N,\theta_1}}} 
{\mathcal{F}}_{ {\mathcal{A}} \cap \Sigma_{e_N,\theta_1} } (x).$$
We have that $\mu\ge0$. We claim that, in fact,
\begin{equation}\label{mu co}
\mu>0.
\end{equation}
Indeed, suppose by contradiction that $\mu=0$. Then, there exists a sequence $q_k$
in~${{\mathcal{A}} \cap \Sigma_{e_N,\theta_1}}$ such that
\begin{equation}\label{mu co2}
0=\mu= \lim_{k\to+\infty} {\mathcal{F}}_{ {\mathcal{A}} \cap \Sigma_{e_N,\theta_1} } (q_k)=
\lim_{k\to+\infty} 
\int_{ {\mathcal{A}} \cap \Sigma_{e_N,\theta_1} } \frac{\big(v(q_k)-v(y)
\big)^2}{|q_k-y|^{N+2s}}\,dy.\end{equation}
Also, since ${{{\mathcal{A}} \cap \Sigma_{e_N,\theta_1}}}$ is pre-compact,
up to a subsequence we may assume that $q_k\to q$ as $k\to+\infty$,
for some~$q$ in the closure of~${{{\mathcal{A}} 
\cap \Sigma_{e_N,\theta_1}}}$.
This, \eqref{mu co2} and Fatou Lemma imply that
$$ 0\ge \int_{ {\mathcal{A}} \cap \Sigma_{e_N,\theta_1} } 
\lim_{k\to+\infty}
\frac{\big(v(q_k)-v(y)
\big)^2}{|q_k-y|^{N+2s}}\,dy=
\int_{ {\mathcal{A}} \cap \Sigma_{e_N,\theta_1} } \frac{\big(v(q)-v(y)
\big)^2}{|q-y|^{N+2s}}\,dy.$$
Consequently, 
$v(y)=v(q)$ for any $y\in {\mathcal{A}} \cap \Sigma_{e_N,\theta_1}$,
that is $v$ is constant on ${\mathcal{A}} \cap \Sigma_{e_N,\theta_1}$
and thus 
\begin{equation}\label{na co}
{\mbox{$\nabla v=0$ on ${\mathcal{A}} \cap \Sigma_{e_N,\theta_1}$.}}\end{equation}
Now, since $v$ is $\alpha$-homogeneous,
$$ \nabla v(x)\cdot x=\alpha v(x)$$
and so, from the positivity assumption on $v$,
we deduce that, fixed $q_o\in \Sigma_{e_N,\theta_1}$ with $|q_o|=1$,
$$ \nabla v(q_o)\cdot q_o=\alpha v(q_o)>0.$$
This is in contradiction with \eqref{na co} and so it proves \eqref{mu co}.

Hence, using \eqref{yuzy} and \eqref{mu co},
for any $x\in\Sigma_{e_N,\theta_1}$,
$$
{\mathcal{F}}_{\R^N} (x) = |x|^{2\alpha-2s}\,{\mathcal{F}}_{\R^N}\left( \frac{x}{|x|}\right)
\ge \mu\,|x|^{2\alpha-2s}\,,$$
as desired.
\end{proof}

Now, for the sake of simplicity, we suppose that \begin{equation}\label{78909uyHJAHH678911}
0 \in \pa \Omega,\end{equation}
that~$\theta\in\left(0,\,\frac\pi2\right)$ and that the cone $\Sigma_{-e_N,\theta}$ is exterior to $\Omega$. Then we take $0<\bar \theta_2<\bar \theta_1:= \theta< \pi/2$, so that 
\[
\Sigma_{-e_N, \bar \theta_2} \subset   \Sigma_{-e_N, \bar \theta_1} \subset (\R^N \setminus \Omega).
\]
Notice that the admissible range of $\bar \theta_2$ depends only on the opening $\theta$. Setting $\theta_i:= \pi-\bar \theta_i$, the complement of $\Sigma_{-e_N, \bar \theta_i}$ is $\Sigma_{e_N, \theta_i}$, with $\pi> \theta_2>\theta_1>\pi/2$.
Let us consider the solution to 
\begin{equation}\label{def homog}
\begin{cases}
(-\Delta)^s v = 0 & \text{in $\Sigma_{e_N, \theta_2}$}, \\
v >0 &  \text{in $\Sigma_{e_N, \theta_2}$}, \\
v = 0 &  \text{in $\R^N \setminus \Sigma_{e_N, \theta_2}$}. 
\end{cases}
\end{equation}
Existence and uniqueness of $v$, up to a multiplicative constant, are proved in \cite[Theorem 3.2]{BanBog}. It is also proved that $v$ is $\alpha$-homogeneous for some $\alpha>0$ depending on $\theta_2$ and on $s$, and that the exponent $\alpha$ is non-increasing in $\theta_2$, i.e. non-decreasing in $\theta$ (see Lemma \cite[Lemma 3.3]{BanBog}). Since $\theta_2>\pi/2$, the cone $\Sigma_{e_N, \theta_2}$ contains $\R^N_+$, and by Lemma 3.3 and Example 3.2 in \cite{BanBog} we deduce that $\alpha < s$. 

\begin{remark}\label{rem: on monotonicity in theta}
The homogeneity exponent $\alpha$ appearing here is exactly the regularity index $\alpha$ in the thesis of Theorem \ref{thm: boundary regularity}. Therefore, as observed above, the monotonicity of $\alpha$ with respect to $\theta$ is a direct consequence of Lemma 3.3 in \cite{BanBog}.
\end{remark}

Interior regularity theory ensures that $v \in \mathcal{C}^\infty(\Sigma_{e_N, \theta_2})$. Thus, as $\theta_2 > \theta_1$ and $v$ is positive, the restriction of $v$ on $\Sigma_{e_N,\theta_1} \cap \S^{N-1}$ is bounded from below and from above by positive constants. By homogeneity, and recalling that $v$ is uniquely determined up to a  multiplicative constant, we can suppose that there exists $C_0>1$ such that
\begin{equation}\label{continuity estimate}\begin{split}
& v(x)\ge|x|^\alpha \qquad \text{for every $x \in \Sigma_{e_N,\theta_1}$}\\
{\mbox{and }}\quad&
v(x) \le C_0 |x|^\alpha \qquad \text{for every $x \in \R^N$}.
\end{split}\end{equation}
Notice that in this way the choice of $v$ depends only on $\theta$
(recall that~$\theta_1=\pi-\bar \theta_1=\pi-\theta$).
With this notation, we can now prove the following
result:

\begin{lemma}\label{lem: barrier 1}
For $R>0$, let us define
\begin{equation}\label{VUOI:1}
z_R:= 2R^{-\alpha} v - v^2.
\end{equation}
Then, there exists $C>0$ depending only on $\theta$ such that
\[
(-\Delta)^s z_R  \ge  C |x|^{2\alpha-2s} \qquad \text{in $\Sigma_{e_N, \theta_1}$}.
\]
\end{lemma}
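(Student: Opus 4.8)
The plan is to compute $(-\Delta)^s z_R$ by exploiting the algebraic identity relating the fractional Laplacian of a product/square to the operator applied to the factors and a bilinear ``carré du champ'' term. Recall that for sufficiently regular $v$ one has the pointwise identity
\[
(-\Delta)^s (v^2)(x) = 2 v(x)\,(-\Delta)^s v(x) - c_{N,s}\int_{\R^N} \frac{(v(x)-v(y))^2}{|x-y|^{N+2s}}\,dy,
\]
which follows by writing $v(x)^2 - v(y)^2 = 2v(x)(v(x)-v(y)) - (v(x)-v(y))^2$ and integrating in principal value. Since $v$ solves $(-\Delta)^s v = 0$ in $\Sigma_{e_N,\theta_2}$ and $\Sigma_{e_N,\theta_1}\subset\subset\Sigma_{e_N,\theta_2}$, the term $v(x)\,(-\Delta)^s v(x)$ vanishes for $x\in\Sigma_{e_N,\theta_1}$, and similarly $(-\Delta)^s(2R^{-\alpha}v)(x)=0$ there. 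Hence, for $x\in\Sigma_{e_N,\theta_1}$,
\[
(-\Delta)^s z_R(x) = -(-\Delta)^s(v^2)(x) = c_{N,s}\int_{\R^N}\frac{(v(x)-v(y))^2}{|x-y|^{N+2s}}\,dy.
\]

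The next step is to invoke Lemma~\ref{lem: frac Euler}: the function $v$ is positive and $\alpha$-homogeneous in $\Sigma_{e_N,\theta_2}$, non-negative on all of $\R^N$ (by \eqref{def homog}), and of class $\mathcal{C}^\infty\subset\mathcal{C}^1_{\loc}$ in $\Sigma_{e_N,\theta_2}$; thus with the pair $\theta_1<\theta_2$ (both in $(\pi/2,\pi)$, so a fortiori in $(0,\pi)$) the lemma yields a constant $C>0$, depending on $\theta_1,\theta_2$ and $v$ — and hence ultimately only on $\theta$ and $s$, since the admissible $\theta_2$ and the normalized $v$ are fixed by $\theta$ — such that
\[
\int_{\R^N}\frac{(v(x)-v(y))^2}{|x-y|^{N+2s}}\,dy \ge C|x|^{2\alpha-2s}\qquad\text{for every }x\in\Sigma_{e_N,\theta_1}.
\]
Combining the two displays (and absorbing $c_{N,s}$ into $C$) gives exactly $(-\Delta)^s z_R(x)\ge C|x|^{2\alpha-2s}$ in $\Sigma_{e_N,\theta_1}$, as claimed.

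The main point requiring care — rather than a genuine obstacle — is the justification of the product identity for $(-\Delta)^s(v^2)$ as a \emph{pointwise, classically convergent} principal value at points $x\in\Sigma_{e_N,\theta_1}$. Here one uses that $v$ is smooth in a neighborhood of such $x$ (interior regularity, $v\in\mathcal{C}^\infty(\Sigma_{e_N,\theta_2})$), which controls the singular behavior near $y=x$ and makes the $PV$ integral for $v^2$ converge, while the growth $v(y)\le C_0|y|^\alpha$ with $\alpha<s$ from \eqref{continuity estimate} guarantees absolute integrability at infinity of $(v(x)-v(y))^2/|x-y|^{N+2s}$ (the integrand decays like $|y|^{2\alpha-N-2s}$ with $2\alpha-2s<0$); the same growth bound ensures $(-\Delta)^s v(x)$ and $(-\Delta)^s(R^{-\alpha}v)(x)$ are well-defined and equal to their values $0$ from the equation. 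One then splits $\R^N = B_\rho(x)\cup(\R^N\setminus B_\rho(x))$, applies the elementary expansion of $v(x)^2-v(y)^2$ on each piece, passes to the limit $\varepsilon\to 0^+$ in the principal value on $B_\rho(x)\setminus B_\varepsilon(x)$ using smoothness of $v$, and reassembles. Once this bookkeeping is done, the estimate is immediate from Lemma~\ref{lem: frac Euler}.
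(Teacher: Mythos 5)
Your proof is correct and follows essentially the same route as the paper: reduce $(-\Delta)^s z_R$ at points of $\Sigma_{e_N,\theta_1}$ to the ``carr\'e du champ'' quantity $\int_{\R^N}(v(x)-v(y))^2|x-y|^{-N-2s}\,dy$ using the $s$-harmonicity of $v$, then invoke Lemma~\ref{lem: frac Euler}. The only cosmetic difference is the algebraic decomposition of $v^2(x)-v^2(y)$ (you use $2v(x)(v(x)-v(y))-(v(x)-v(y))^2$, the paper inserts and subtracts $v(x)\,(-\Delta)^s v(x)$ after factoring), and your added remarks on the convergence of the principal value, using the growth bound from \eqref{continuity estimate} and interior smoothness, are a welcome extra justification.
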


\begin{proof}
In light of Lemma \ref{lem: frac Euler}, we can compute (notice that
we omit the constant $c_{N,s}$ and the principal value sense to simplify the notation)
\begin{equation}\label{stima su delta}
\begin{split}
-(-\Delta)^s z_R (x) & = - 2 R^{-\alpha} \underbrace{(-\Delta)^s v (x)}_{=0} + (-\Delta)^s (v^2)(x) \\
& =  \int_{\R^N} \frac{v^2(x) - v^2 (y)}{|x-y|^{N+2s}}\,dy \\
& = v(x) \underbrace{ \int_{\R^N} \frac{v(x) - v (y)}{|x-y|^{N+2s}}\,dy}_{=(-\Delta)^s v(x) = 0} + \int_{\R^N} v(y) \frac{v(x) - v (y)}{|x-y|^{N+2s}}\,dy \\
& =  \int_{\R^N} v(y) \frac{v(x) - v (y)}{|x-y|^{N+2s}}\,dy - v(x) \underbrace{\int_{\R^N} 
\frac{v(x) - v (y)}{|x-y|^{N+2s}}\,dy}_{=(-\Delta)^s v(x) = 0} \\
& = -  \int_{\R^N} \frac{(v(x) - v (y))^2}{|x-y|^{N+2s}}\,dy \le - C |x|^{2\alpha -2s}
\end{split}
\end{equation}
for any $x \in \Sigma_{e_N, \theta_1}$, with $C>0$ depending only on $\theta$
and $v$. This gives the desired result.
\end{proof}

Since $\alpha <s$, Lemma~\ref{lem: barrier 1}
suggests that $z_R$ can be used to construct a wall of upper barriers for $u$ in $\Omega \cap B_R$. 
This is indeed exactly what happens in the classical
case, but
a similar statement does not hold here,
due to the nonlocal nature of the problem. Indeed, in order to apply the 
comparison principle, we should control the sign of the function~$z_R-u$ in the whole 
complement of $\Omega \cap B_R$ and not only along its boundary. On the other hand,
one cannot conclude directly in our case
that $z_R-u \ge 0$ in the
complement of $\Omega \cap B_R$, since by~\eqref{continuity estimate} the function~$z_R$ 
is negative if $x \in \Sigma_{e_N,\theta_2}$ and $|x|$ is very large. 
To overcome this problem, in what follows we define a suitable truncation $w_R$ of $z_R$, being careful enough so that the thesis of Lemma \ref{lem: barrier 1} still holds for $w_R$, and moreover $w_R-u \ge 0$ in $\R^N \setminus (\Omega \cap B_R)$. 

To this purpose, let us consider the algebraic equation
\begin{equation}\label{0pojAyA990:1} 2R^{-\alpha} t - t^2 = 1 \end{equation}
which possesses the two solutions
\begin{equation}\label{0pojAyA990:2}  t_{1}= R^{-\alpha} -\sqrt{R^{-2\alpha} -1}\quad{\mbox{ and }}\quad
t_{2}= R^{-\alpha} + \sqrt{R^{-2\alpha} -1},\end{equation}
provided that~$R\le1$.
We introduce the set 
\begin{equation}\label{9A78aA}
D_R:= \{x \in \R^N: v(x) \ge t_2\} = \left\{x \in \R^N: 
v(x) \ge R^{-\alpha} + \sqrt{R^{-2\alpha} -1}\right\}
.\end{equation}
Notice that, by \eqref{continuity estimate}, $D_R \neq \varnothing$. Moreover, if~$R>0$ is sufficiently small, we have that
\begin{equation}\label{D_R exterior}
{\mbox{if }}
|x| \le \left(\frac{3}{2C_0}\right)^{1/\alpha} \frac{1}{R} 
{\mbox{ then }}
x \not \in D_R
\end{equation}
and so, in particular,
\begin{equation}\label{D_R exterior:2}
D_R\subseteq\R^N\setminus B_R.
\end{equation}
Now we construct the barrier needed for our purposes:

\begin{lemma}\label{lem: barrier 2} Let~$z_R$ be the function introduced
in Lemma~\ref{lem: barrier 1}.

Let 
\begin{equation}\label{VUOI:2}
w_R(x):=\begin{cases}  z_R(x) & \text{if $x \in \R^N \setminus D_R$}, \\  
1 & \text{if $x \in D_R$}.
\end{cases}
\end{equation}
Then, there exist $\bar R >0$ small enough and a constant $\bar C>0$, both depending only on $\theta$, such that 
\begin{equation}\label{9ikAHHKA000}
\begin{cases}
(-\Delta)^s w_R \ge \bar C |x|^{2\alpha-2s} & \text{in $\Sigma_{e_N,\theta_1} \cap B_R$}, \\
w_R  \ge 0 & \text{in $\R^N \setminus \Sigma_{e_N,\theta_1}$},\\ 
w_R \ge 1 & \text{in $\Sigma_{e_N,\theta_1} \setminus B_R$},
\end{cases}
\end{equation}
for every $R \in (0,\bar R]$.
\end{lemma}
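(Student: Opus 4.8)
The plan is to deduce the three assertions of~\eqref{9ikAHHKA000} from the elementary shape of the parabola $g(t):=2R^{-\alpha}t-t^2$, whose superlevel set $\{g\ge1\}$ is the closed interval $[t_1,t_2]$ with $t_1,t_2$ as in~\eqref{0pojAyA990:2}, from the two-sided estimate~\eqref{continuity estimate} on $v$, and from Lemma~\ref{lem: barrier 1}. Throughout I will use that $R\le1$ (so that $t_1,t_2$ are real and $t_1t_2=1$) and that, by~\eqref{D_R exterior}--\eqref{D_R exterior:2}, the set $D_R$ lies outside a ball $B_{cR^{-1}}$ for a constant $c>0$ depending only on $\theta$ (explicitly $c=(3/(2C_0))^{1/\alpha}$); in particular $D_R\subseteq\R^N\setminus B_R$, so that any $x\in B_R$ satisfies $w_R\equiv z_R$ near $x$. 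The only genuinely nonlocal difficulty is the differential inequality, where I must control the correction to $(-\Delta)^s z_R$ produced by having truncated $z_R$ to the constant $1$ on $D_R$.

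For the two sign conditions I would split according to whether the base point lies in $D_R$. If $x\in D_R$ then $w_R(x)=1>0$; if $x\notin D_R$ then $v(x)<t_2\le 2R^{-\alpha}$, so $w_R(x)=z_R(x)=v(x)\bigl(2R^{-\alpha}-v(x)\bigr)\ge0$ because $v\ge0$ on all of $\R^N$. This already gives $w_R\ge0$ everywhere, in particular on $\R^N\setminus\Sigma_{e_N,\theta_1}$. For $w_R\ge1$ on $\Sigma_{e_N,\theta_1}\setminus B_R$, the case $x\in D_R$ is immediate; and if $x\in\Sigma_{e_N,\theta_1}\setminus B_R$ with $x\notin D_R$, then~\eqref{continuity estimate} and $|x|\ge R$ give $v(x)\ge|x|^\alpha\ge R^\alpha$, while $t_1=1/t_2\le R^\alpha$ because $t_2\ge R^{-\alpha}$; hence $v(x)\in[t_1,t_2)$, and since $g(t)-1=-(t-t_1)(t-t_2)\ge0$ on $[t_1,t_2]$ we get $w_R(x)=g(v(x))\ge1$.

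For the differential inequality, fix $x\in\Sigma_{e_N,\theta_1}\cap B_R$ (note $(-\Delta)^s w_R(x)$ and $(-\Delta)^s z_R(x)$ are classically defined, since $x$ lies in the interior of $\Sigma_{e_N,\theta_2}$ where $v$ is smooth, and $v,z_R,w_R$ grow at most like $|y|^{2\alpha}$ with $2\alpha<2s$). As $w_R\equiv z_R$ near $x$ and $w_R(x)=z_R(x)$, subtracting the two integral representations yields
\[
(-\Delta)^s w_R(x)=(-\Delta)^s z_R(x)+\int_{D_R}\frac{z_R(y)-1}{|x-y|^{N+2s}}\,dy .
\]
On $D_R$ one has $v(y)\ge t_2\ge R^{-\alpha}$, so $v(y)$ lies to the right of the vertex of $g$ and $z_R(y)=g(v(y))\le g(t_2)=1$: the correction is non-positive, which is the favourable sign. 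For a matching lower bound, $1-z_R(y)=v(y)^2-2R^{-\alpha}v(y)+1\le 1+C_0^2|y|^{2\alpha}\le C|y|^{2\alpha}$ on $D_R$ (where $|y|\ge cR^{-1}\ge1$ for $R$ small), and for $R$ small the conditions $|x|\le R$ and $|y|\ge cR^{-1}$ force $|x-y|\ge|y|/2$; hence
\[
0\ge\int_{D_R}\frac{z_R(y)-1}{|x-y|^{N+2s}}\,dy\ge-C\int_{\R^N\setminus B_{cR^{-1}}}|y|^{2\alpha-N-2s}\,dy=-C'R^{2s-2\alpha},
\]
the tail integral converging precisely because $\alpha<s$. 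Combining with Lemma~\ref{lem: barrier 1}, $(-\Delta)^s w_R(x)\ge C|x|^{2\alpha-2s}-C'R^{2s-2\alpha}$; since $|x|<R$ and $2\alpha-2s<0$ we have $R^{2s-2\alpha}\le R^{4s-4\alpha}|x|^{2\alpha-2s}$, and because $4s-4\alpha>0$ we may fix $\bar R\in(0,1)$, depending only on $\theta$, so small that $C'R^{4s-4\alpha}\le C/2$ for all $R\le\bar R$. This gives $(-\Delta)^s w_R\ge\bar C|x|^{2\alpha-2s}$ on $\Sigma_{e_N,\theta_1}\cap B_R$ with $\bar C=C/2$, completing~\eqref{9ikAHHKA000}.

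The main obstacle is exactly this last step. In contrast with the local case of~\cite{BCNCPAM}, truncating $z_R$ on $D_R$ alters $(-\Delta)^s w_R$ at every point of $B_R$, so the far region cannot simply be discarded: one must check both that the feedback has the right sign (which is why $D_R$ is placed beyond the vertex $\{v=R^{-\alpha}\}$ of $g$) and, quantitatively, that it is of lower order than the blow-up $|x|^{2\alpha-2s}$ supplied by Lemma~\ref{lem: barrier 1}. The inequality $\alpha<s$ enters twice here — for the convergence of the tail integral and for the absorption $R^{2s-2\alpha}\le R^{4s-4\alpha}|x|^{2\alpha-2s}$ — and it is this absorption that forces the smallness of $\bar R$, whose dependence on $\theta$ alone is inherited from that of $\alpha$ and $C_0$ (with $s$, $N$ fixed).
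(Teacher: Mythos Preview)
Your proof is correct and follows essentially the same strategy as the paper's: verify the pointwise sign conditions from the shape of the parabola $g(t)=2R^{-\alpha}t-t^2$ and the bounds~\eqref{continuity estimate}, then control the nonlocal correction $\int_{D_R}(z_R-1)/|x-y|^{N+2s}\,dy$ using $D_R\subset\R^N\setminus B_{cR^{-1}}$ and absorb the resulting $R^{2s-2\alpha}$ into $|x|^{2\alpha-2s}$. Your bookkeeping is in fact a bit cleaner than the paper's in two places --- you handle $w_R\ge0$ directly rather than by contradiction, and you exploit the Vieta relation $t_1t_2=1$ to get $t_1\le R^\alpha$ immediately for the bound $w_R\ge1$ --- but these are stylistic, not substantive, differences.
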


\begin{proof} First of all, we observe that
\begin{equation}\label{FO wR:001}
{\mbox{$w_R \ge 0$ in $\R^N$.}}\end{equation} 
Indeed, if $w_R(x) <0$ then necessarily  
\begin{equation}\label{2604}
x \not \in D_R \quad \text{and} \quad 2R^{-\alpha}v(x) -v^2(x) <0.
\end{equation}
The second inequality is satisfied if and only if $v(x)>2R^{-\alpha}$. But $2R^{-\alpha}>t_2$, so that $v(x)>2R^{-\alpha}$ implies $x \in D_R$. This shows that the two conditions in \eqref{2604} cannot take place simultaneously, and proves~\eqref{FO wR:001}.

Now we recall~\eqref{D_R exterior:2} and we write
\[
\Sigma_{e_N,\theta_1} \setminus B_R = (\Sigma_{e_N,\theta_1} \cap D_R)
\cup \big(\Sigma_{e_N,\theta_1} \setminus (D_R \cup B_R)\big).
\] 
We claim that
\begin{equation}\label{56789ojhgfcvA}
{\mbox{$w_R \ge 1$ in $\Sigma_{e_N,\theta_1} \setminus B_R$.}}
\end{equation}
To this aim, we point out that~$\Sigma_{e_N,\theta_1} \cap D_R\subseteq
\Sigma_{e_N,\theta_1} \setminus B_R$, thanks to~\eqref{D_R exterior:2}, and so, by~\eqref{9ikAHHKA000},
we obtain that~$w_R \ge 1$ in $\Sigma_{e_N,\theta_1} \cap D_R$.
So, to complete the proof of~\eqref{56789ojhgfcvA},
we focus now on the value of~$w_R$ on the points of~$\Sigma_{e_N,\theta_1} \setminus (D_R \cup B_R)$.
For this, we recall~\eqref{0pojAyA990:1}, \eqref{0pojAyA990:2} and~\eqref{9A78aA},
and we observe that
\begin{equation}\label{insieme cattivo}
\begin{split}
(\R^N \setminus D_R) \cap \{w_R < 1\} &= (\R^N \setminus D_R) \cap (\{ v < t_1\} \cup \{v > t_2\}) \\
&= (\R^N \setminus D_R) \cap \{ v < t_1\} \\&=  \{ v < t_1\}.
\end{split}
\end{equation}
Now we remark that
\begin{equation}\label{5678hgfvbghnAAAA}
\{ v < t_1\}\subseteq B_R.
\end{equation}
Indeed,
if~$v(x) < t_1 = R^{-\alpha} -\sqrt{R^{-2\alpha}-1}$, then by \eqref{continuity estimate}
we know that~$|x|^{\alpha} < t_1$, i.e.
$$ |x| < \left( R^{-\alpha} -\sqrt{R^{-2\alpha}-1} \right)^{1/\alpha}
<R,$$
which proves~\eqref{5678hgfvbghnAAAA}.

{F}rom~\eqref{insieme cattivo} and~\eqref{5678hgfvbghnAAAA}, we deduce that
$$ (\R^N \setminus D_R) \cap \{w_R < 1\} \subseteq B_R.$$
As a consequence,
\[ \Sigma_{e_N,\theta_1} \setminus (D_R \cup B_R)] \cap \{w_R <1\} = \varnothing,
\]
and therefore~$w_R \ge 1$ in $\Sigma_{e_N,\theta_1} \setminus (D_R \cup B_R)$
for every $R$ sufficiently small, which completes the proof of~\eqref{56789ojhgfcvA}.

Now we focus on
the inequality satisfied by the fractional Laplacian of~$w_R$ in $\Sigma_{e_N,\theta_1} \cap B_R$.
We first observe that if~$x \in \Sigma_{e_N,\theta_1} \cap B_R$, then~$x\in \Sigma_{e_N,\theta_1}
\setminus D_R$, thanks to~\eqref{D_R exterior:2}, and so~$w_R(x)=z_R(x)$.
Using this and
Lemma \ref{lem: barrier 1}, we obtain that,
for any~$x \in \Sigma_{e_N,\theta_1} \cap B_R$,
\begin{equation}\label{yhikmtgbHNk}\begin{split}
-(-\Delta)^s w_R(x)\, & =  -(-\Delta)^s z_R(x) + \big[(-\Delta)^s z_R(x) -(-\Delta)^s w_R(x)\big] \\
& \le - C |x|^{2\alpha-2s} + \int_{\R^N} \frac{z_R(x)- z_R(y) -w_R(x) + w_R(y)}{|x-y|^{N+2s}}\,dy \\
& = - C |x|^{2\alpha-2s} + \int_{D_R} \frac{w_R(y)- z_R(y)}{|x-y|^{N+2s}}\,dy \\
& = - C |x|^{2\alpha-2s} + \int_{D_R} \frac{ 1 - z_R(y)}{|x-y|^{N+2s}}\,dy \\
& \le - C |x|^{2\alpha-2s} + \int_{\R^N \setminus B_{C_1/R}} \frac{1 + |z_R(y)|}{|x-y|^{N+2s}}\,dy
\end{split}\end{equation}
where in the last inequality we used \eqref{D_R exterior} with $C_1:=(3/(2C_0))^{1/\alpha}$. 

Notice also that,
if~$y \in \R^N \setminus B_{C_1/R}$, then
\[
R^{-\alpha} = C_1^{-\alpha} \left( \frac{C_1}{R}\right)^{\alpha} \le C_1^{-\alpha} |y|^\alpha
.\]
Consequently, by \eqref{continuity estimate},
\begin{equation}\label{332}
|z_R(y)| \le 2 R^{-\alpha} v(y) + v^2 (y)\le C |y|^{2\alpha}  \qquad \text{for any $y \in \R^N \setminus B_{C_1/R}$}.
\end{equation}
Furthermore, since $R>0$ is appropriately small, we have
\begin{equation}\label{333}
|x-y| \ge |y|- |x| \ge |y|- R \ge \frac{|y|}{2} \qquad \text{for $x \in B_R$ and $y \in \R^N \setminus B_{C_1/R}$}.
\end{equation}
Plugging~\eqref{332} and \eqref{333} into~\eqref{yhikmtgbHNk}, we infer that
\begin{align*}
-(-\Delta)^s w_R(x)  & \le - C |x|^{2\alpha-2s} +  \int_{\R^N \setminus B_{C_1/R}} \frac{1 + |z_R(y)|}{|x-y|^{N+2s}}\,dy \\
& \le - C |x|^{2\alpha-2s} + C \int_{\R^N \setminus B_{C_1/R}} \frac{|y|^{2\alpha}}{|y|^{N+2s}}\,dy \\
& \le - C  |x|^{2\alpha-2s} + C \int_{C_1/R}^{+\infty} r^{2\alpha-2s-1}\,dr \\
& =  - C  |x|^{2\alpha-2s} + C R^{2s-2\alpha} 
\end{align*}
for any~$x\in\Sigma_{e_N,\theta_1} \cap B_R$.
Hence, recalling that~$R>0$ is sufficiently small, that~$|x|<R$, and that~$\alpha<s$,
we obtain that~$
-(-\Delta)^s w_R(x) \le - \bar C  |x|^{2\alpha-2s}$,
for any~$x\in\Sigma_{e_N,\theta_1} \cap B_R$,
for an appropriate~$\bar C>0$, as desired.
\end{proof}

Now we are in the position of completing the proof of Proposition \ref{prop: 4.1 in BCN}.

\begin{proof}[Proof of Proposition \ref{prop: 4.1 in BCN}]
Let 
\[
\tilde u:= \frac{u}{\max\left\{ \|u\|_{L^\infty(\R^N)} ,1 \right\} }.
\]
Notice that
\begin{equation}\label{adgfhjIUJBAjHGFGHJA}
{\mbox{$\tilde u\le1$ in~$\R^N$.}}\end{equation}
Let also~$w_R$ be the function introduced in Lemma~\ref{lem: barrier 2}.
We claim that
\begin{equation}\label{adgfhjIUJBAjHGFGHJA:BIS}
{\mbox{$w_R \ge \tilde u$ in $\Omega \cap B_R$}}\end{equation}
for $R \in (0, \bar R]$ (where~$\bar R>0$ is small enough, as given
by Lemma~\ref{lem: barrier 2}).
To prove~\eqref{adgfhjIUJBAjHGFGHJA:BIS}, we observe that, by
Lemma \ref{lem: barrier 2} 
and~\eqref{adgfhjIUJBAjHGFGHJA},
we know that 
\begin{equation}\label{adgfhjIUJBAjHGFGHJA:TRIS}
{\mbox{$w_R \ge \tilde u$ in $\R^N \setminus (\Omega \cap B_R)$.}}\end{equation}
Also, we set
\begin{equation}\label{def R2}
R := \min\left\{ \frac{\bar R}{2}, \left( \frac{\bar C \max\left\{ \|u\|_{L^\infty(\R^N)} ,1 \right\}}{2 \|g\|_{L^\infty(\Omega)}} \right)^{\frac{1}{2s-2\alpha}}\right\}.
\end{equation}
Notice in particular that $R < \bar R$.
Hence, using again Lemma \ref{lem: barrier 2} and~\eqref{p_r}, we have that
\begin{align*}
(-\Delta)^s (w_R-\tilde u)(x) \,& \ge \bar C |x|^{2\alpha-2s} - \frac{\|g\|_{L^\infty(\Omega)}}{\max\left\{ \|u\|_{L^\infty(\R^N)} ,1 \right\}} \\
& \ge |x|^{2\alpha-2s} \left( \bar C - R^{2s-2\alpha} \frac{\|g\|_{L^\infty(\Omega)}}{\max\left\{ \|u\|_{L^\infty(\R^N)} ,1 \right\}}\right) \\
& \ge \frac{\bar C}{2} |x|^{2\alpha-2s} \ge 0
\end{align*}
for any~$x\in\Omega \cap B_R$.

This, \eqref{adgfhjIUJBAjHGFGHJA:TRIS}
and the maximum principle, imply that $w_R \ge \tilde u$ in $\Omega \cap B_R$, i.e.
\begin{equation}\label{233:0}
u(x) \le (1+\|u\|_{L^\infty(\R^N)}) w_R(x)
\end{equation}
for any~$x\in\Omega \cap B_R$. 
On the other hand, if~$x\in\Omega \cap B_R$, we deduce from~\eqref{D_R exterior:2}
that~$x\in \Omega\setminus D_R$, and therefore, by~\eqref{VUOI:1} and~\eqref{VUOI:2},
$$ w_R(x)\le 2R^{-\alpha} v(x).$$
By inserting this into~\eqref{233:0}, we conclude that
\begin{equation}\label{233}
u(x) \le 2 (1+\|u\|_{L^\infty(\R^N)}) R^{-\alpha} v(x)\end{equation}
for any~$x\in\Omega \cap B_R$.
Now we observe that, by~\eqref{def R2},
\begin{equation}\label{stima su R}
R^{-\alpha}  \le \left( \frac{2}{\bar R}\right)^\alpha + \left(\frac{2 \|g\|_{L^\infty(\Omega)}}{\bar C \max\left\{ \|u\|_{L^\infty(\R^N)} ,1 \right\}} \right)^{\frac{\alpha}{2s-2\alpha}}
\le C \left( 1+  \|g\|_{L^\infty(\Omega)}^{\frac{\alpha}{2s-2\alpha}}\right),
\end{equation}
with $C$ depending only on $\theta$ (recall that $\bar R$ depends only on $\theta$). Therefore,
recalling~\eqref{233} and~\eqref{continuity estimate}, we conclude that
\[
u(x) \le  C\,\big(1+\|u\|_{L^\infty(\R^N)}\big)\, \left( 1+  \|g\|_{L^\infty(\Omega)}^{\frac{\alpha}{2s-2\alpha}}\right)\, |x|^\alpha,
\]
for any~$x\in\Omega \cap B_R$,
for some positive constant $C$ depending only on $\theta$.

The same argument can be repeated replacing $u$ with $-u$, and so we conclude that,
for any~$x\in\Omega \cap B_R$,
\begin{equation}\label{456789oJHGAGGkkk} |u(x)|
\le  C\,\big(1+\|u\|_{L^\infty(\R^N)}\big)\, 
\left( 1+  \|g\|_{L^\infty(\Omega)}^{\frac{\alpha}{2s-2\alpha}}\right)\, |x|^\alpha .\end{equation}
Now, take~$p\in\Omega$. We distinguish two cases,
either~$\dist(p,\pa \Omega)<R$ or~$\dist(p,\pa \Omega)\ge R$.

If~$\dist(p,\pa \Omega)<R$, we consider~$\tilde p\in\pa\Omega$ to be
a projection of~$p$ along~$\pa\Omega$. 
Up to a rigid motion, we may also suppose that~$\tilde p=0$
(hence we are in the normalized setting of~\eqref{78909uyHJAHH678911}).
In this way, we have that
$$  R>\dist(p,\pa \Omega)=|p-\tilde p|=|p|.$$
Hence, from~\eqref{456789oJHGAGGkkk}, we have that
\begin{align*}
 |u(p)|
& \le  C\,\big(1+\|u\|_{L^\infty(\R^N)}\big)\,
\left( 1+  \|g\|_{L^\infty(\Omega)}^{\frac{\alpha}{2s-2\alpha}}\right)\, |p|^\alpha \\
& =C\,\big(1+\|u\|_{L^\infty(\R^N)}\big)\,
\left( 1+  \|g\|_{L^\infty(\Omega)}^{\frac{\alpha}{2s-2\alpha}}\right)\, 
\dist(p,\pa \Omega)^\alpha,
\end{align*}
which establishes~\eqref{7899oiHYGFAGHJhgfghj11OA} in this case.

If instead~$\dist(p,\pa \Omega)\ge R$, we have that
\[
|u(p)| \le \|u\|_{L^\infty(\R^N)} = \|u\|_{L^\infty(\R^N)} \frac{\dist(p,\pa \Omega)^\alpha}{\dist(p,\pa \Omega)^\alpha} \le \frac{\|u\|_{L^\infty(\R^N)} }{R^\alpha} \dist(p,
\pa \Omega)^\alpha.
\]
Then, the estimate in~\eqref{7899oiHYGFAGHJhgfghj11OA} follows in this case from~\eqref{stima su R}.
This concludes the proof of Proposition~\ref{prop: 4.1 in BCN}.
\end{proof}

Thanks to the above results, we can now complete the proof of Theorem \ref{thm: boundary regularity}:

\begin{proof}[Proof of Theorem \ref{thm: boundary regularity}]
Since $u \in L^\infty(\R^N)$, we can
restrict to the case when $|x-y|$ is small, say \begin{equation}\label{5789yHA0987656tyaa}
|x-y|< \frac{R}{4},\end{equation} 
with $R>0$ defined in \eqref{def R2}. It is also not restrictive to suppose that $R <2$.
We prove the thesis of the theorem according to four different cases. \medskip

\emph{Case 1)} Assume that
$$x,y \in \Omega {\mbox{ with }}
\dist(x,\partial \Omega) \ge \frac{R}2.$$ 
In this case, we have that $B_{R/2}(x) \subset \Omega$.
In particular, by~\eqref{5789yHA0987656tyaa},
$$ x, y\in B_{R/2}(x) \subset \Omega.$$
Therefore, by scaling the interior H\"older estimate in \cite[Proposition 2.9]{SilCPAM},
we deduce that for a constant $C>0$ depending only on $\theta$
$$ |u(x)-u(y)| 
\le C\left[ \left( \frac{2}{R}\right)^\alpha \|u\|_{L^\infty(\R^N)} +  \left( \frac{R}{2}\right)^{2s -\alpha} \|g\|_{L^\infty(\Omega)}\right]  |x-y|^\alpha .$$
Now, using \eqref{stima su R}, we obtain
$$ |u(x)-u(y)|
\le
C\,\left[ (1+\|g\|_{L^\infty(\Omega)}^{\frac{\alpha}{2s-2\alpha}}) \|u\|_{L^\infty(\R^N)} +   \|g\|_{L^\infty(\Omega)}\right]  |x-y|^\alpha,$$
which is the desired result in this case. \medskip

\emph{Case 2)} Now assume that
$$x,y \in \Omega
{\mbox{ with }}
\dist(x,\partial \Omega) \le d(y,\pa \Omega) \le\frac{ R}2
{\mbox{ and }} |x-y| \ge \frac{\dist(y,\pa \Omega)}8.$$ Then, by Proposition \ref{prop: 4.1 in BCN},
\begin{align*}
|u(x)-u(y)| &\le 2\max\{|u(x)|,|u(y)|\}  \\
& \le C\,\big(1+ \|u\|_{L^\infty(\R^N)}\big)\, \left(
1+\|g\|_{L^\infty(\Omega)}^{\frac{\alpha}{2s-2\alpha}}\right)\, \dist(y,\pa \Omega)^\alpha \\
&  \le C 8^\alpha C\,\big(1+ \|u\|_{L^\infty(\R^N)}\big)\, \left(
1+\|g\|_{L^\infty(\Omega)}^{\frac{\alpha}{2s-2\alpha}}\right)\,  |x-y|^\alpha,
\end{align*}
as desired.
\medskip

\emph{Case 3)} Now we suppose that
$$x,y \in \Omega {\mbox{ with }} \dist(x,\partial \Omega)\le\dist(y,\pa \Omega) \le \frac{R}2
{\mbox{ and }} 
|x-y| \le \frac{\dist(y,\pa \Omega)}8.$$ Let us set 
$$\rho:= \frac{\dist(y,\pa \Omega)}4$$
and $\tilde u(z):= u(y+\rho z)$. 

We remark that if~$z\in B_4$ then
$$ y+\rho z\in B_{4\rho}(y)=B_{\dist(y,\pa \Omega)}(y)\subseteq\Omega.$$
Hence, for any~$z\in B_4$, we obtain from~\eqref{p_r} that
$$ (-\Delta)^s \tilde u(z) = \rho^{2s} g(y+\rho z).$$
Accordingly, using the fact that $\alpha <s$, we obtain that, for any~$z\in B_4$,
\begin{equation}\label{2130}
|(-\Delta)^s \tilde u(z)| = |\rho^{2s} g(y+\rho z)| \le \rho^\alpha \|g\|_{L^\infty(\Omega)}.
\end{equation}
Moreover, thanks to Proposition \ref{prop: 4.1 in BCN}, we have that
\begin{align*}
|\tilde u(z)|  & =  |u(y+\rho z)| \le   C\,
\big(1+ \|u\|_{L^\infty(\R^N)}\big)\, \left(
1+\|g\|_{L^\infty(\Omega)}^{\frac{\alpha}{2s-2\alpha}}\right)\,
\dist(y+\rho z,\pa \Omega)^\alpha \\
& \le C\,\big(1+ \|u\|_{L^\infty(\R^N)}\big)\, \left(1+\|g\|_{L^\infty(\Omega)}^{\frac{\alpha}{2s-2\alpha}} \right)\,\left( \dist(y,\pa \Omega)+ \rho |z|\right)^\alpha  \\
& \le  C\,\big(1+ \|u\|_{L^\infty(\R^N)}\big)\, \left(1+\|g\|_{L^\infty
(\Omega)}^{\frac{\alpha}{2s-2\alpha}} \right)\, \rho^\alpha \,( 1 + |z|)^\alpha
\end{align*}
for any $z \in \R^N$, up to renaming~$C>0$.

As a consequence,
\begin{equation}\label{2131}\begin{split}
& \sup_{B_2} |\tilde u|  \le C\,\big(1+ \|u\|_{L^\infty(\R^N)}\big)\, \left(
1+\|g\|_{L^\infty(\Omega)}^{\frac{\alpha}{2s-2\alpha}}\right)\, \rho^\alpha
\\ {\mbox{and }}\qquad&
\int_{\R^N} \frac{|\tilde u(z)|}{(1+|z|)^{N+2s}}\,dz \le C\,
\big(1+ \|u\|_{L^\infty(\R^N)}\big)\, \left(
1+\|g\|_{L^\infty(\Omega)}^{\frac{\alpha}{2s-2\alpha}}\right)  \rho^\alpha.
\end{split}\end{equation}
Estimates \eqref{2130} and~\eqref{2131} allow us to apply Corollary 2.5 in \cite{RosSerJMPA},
which implies that
\begin{equation}\label{8ih3456789ytrdfghjjhgfd}
\|\tilde u\|_{\mathcal{C}^{0,\alpha}(\overline{B_{1/2}})} \le C \left[ \big(
1+ \|u\|_{L^\infty(\R^N)}\big)\, \left(
1+\|g\|_{L^\infty(\Omega)}^{\frac{\alpha}{2s-2\alpha}}\right) + \|g\|_{L^\infty(\Omega)}\right] \, \rho^\alpha.
\end{equation}
Now we observe that
$$ \frac{|x-y|}{\rho} \le \frac{\dist(y,\pa \Omega) }{8\rho} =\frac{1}{2}$$
and so
$$ z_* := \frac{x-y}{\rho} \in \overline{B_{1/2}}.$$
As a consequence, we deduce from~\eqref{8ih3456789ytrdfghjjhgfd} that
\begin{align*}
|u(x) - u(y)| &=|\tilde u(z_*)-\tilde u(0)|\\
& \le C\left[  \big(1+ \|u\|_{L^\infty(\R^N)}\big)\,\left(
1+\|g\|_{L^\infty(\Omega)}^{\frac{\alpha}{2s-2\alpha}} \right) + \|g\|_{L^\infty(\Omega)} \right]\,\rho^\alpha \,
\left( \frac{|x-y|}{\rho}\right)^\alpha,
\end{align*}
which gives the desired result in this case.
\medskip

\emph{Case 4)} Finally, we consider the case in which
$$x \in \Omega
{\mbox{ and }} 
y \in \R^N \setminus \Omega.$$ By Proposition \ref{prop: 4.1 in BCN} 
\[
|u(x)-u(y)| = |u(x)| \le C\,  \big(1+ \|u\|_{L^\infty(\R^N)}\big)\, \left(
1+\|g\|_{L^\infty(\Omega)}^{\frac{\alpha}{2s-2\alpha}} \right)\,\dist(x,\pa \Omega)^\alpha.
\]
Since in this case~$\dist(x,\pa \Omega) < |x-y|$, this completes the proof.
\end{proof}

\section{Monotonicity and qualitative properties in globally Lipschitz epigraphs}\label{sec: thm 1}

In this section, we give
the proof of Theorem \ref{thm: main 1}.
Differently from the strategy adopted in \cite{BCNCPAM},
in our case property ($iv$) is a direct consequence of 
the general result in Theorem \ref{thm: boundary regularity}.

The organization of this section is the following.
In Subsection \ref{sec: convergence} we prove properties ($i$) and ($ii$) in 
Theorem \ref{thm: main 1}. Property ($iii$) is the object of Subsection~\ref{sec: growth}. The uniqueness and the monotonicity in $x_N$ are proved in Subsection \ref{sec: uniqueness} with a unified approach, simplifying the proof in \cite{BCNCPAM}.
Finally, the general monotonicity property in point ($vi$) of Theorem \ref{thm: main 1}
follows simply by a 
suitable rotation of coordinates. We point out that, for such argument, the global Lipschitz continuity of the
epigraph $\Omega$ is needed. 

\medskip

Before proceeding, we observe that it is not restrictive to suppose from now on that
\begin{equation}\label{mu=1}
\mu=1 \quad \text{in assumptions ($f1$)-($f3$)},
\end{equation} 
for the sake of simplicity. Moreover, we define
\begin{equation}\label{M:M} M:=\sup_\Omega u.\end{equation}
In this way, we have that~$0 < u  \le  M$ in $\Omega$. Accordingly,
in~\eqref{problem} only the restriction of $f$ on the interval $[0,M]$ plays a role. 
Therefore, we can modify the definition of $f$ outside $[0,M]$ without changing the 
equation, and as a consequence from now on we can suppose that 
\begin{equation}\label{rem: f Lip}
\text{$f$ is not only locally Lipschitz, but globally Lipschitz continuous},
\end{equation}
and we denote by $L$ its Lipschitz constant.

\subsection{Uniform convergence of $u$ to $1$}\label{sec: convergence}

Our goal now is to show that $u < 1$ in $\Omega$, as claimed
in Theorem \ref{thm: main 1}-($i$)
(and hence, comparing with~\eqref{M:M}, it follows that $M \le 1$). 

\begin{proof}[Proof of Theorem \ref{thm: main 1}-($i$)]
In order to show that $u \le 1$ we can proceed as in \cite{BCNCPAM}, 
using Theorem \ref{thm: super max}. For the strict inequality it is sufficient to observe that, being $f(1)=0$, we are in position to apply 
the strong maximum principle to the function $1-u$. For further details, compare with~\cite[page 1095]{BCNCPAM}.
\end{proof}

In the rest of the subsection we prove property ($ii$) in Theorem \ref{thm: main 1}
(the local counterpart of this strategy is contained in Section 3 in \cite{BCNCPAM}).

\begin{lemma}\label{lem: 3.1 BCN}
Let $D$ be an open subset of $\R^N$, and let 
$g$ be a locally Lipschitz continuous function.

Let~$v$ be a classical supersolution  to
\[
\begin{cases}
(-\Delta)^s v \ge g(v) & \text{in $D$}, \\
v >0 & \text{in $D$}, \\
v \ge 0 & \text{in $\R^N \setminus D$}.
\end{cases}
\]
Let also~$B$ be a ball with closure $\overline{B}$ contained in $D$, and let $z$ be a classical subsolution to 
\[
\begin{cases}
(-\Delta)^s z \le g(z) & \text{in $B \cap \{z>0\}$}, \\
z \le v & \text{in $B$}, \\
z \le 0 & \text{in $\R^N \setminus B$}.
\end{cases}
\]
Then for any one-parameter continuous family of Euclidean motions $\{A(t): 0 \le t \le T\}$ with $A(0)= \textrm{Id}$ and $A(t) \overline{B} \subset D$ for every $t$, it results that
\[
z_t(x):= z(A(t)^{-1} x) < u(x) \qquad \text{in $B_t:= A(t) B$},
\]
for every $t \in [0,T]$.
\end{lemma}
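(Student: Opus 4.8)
The plan is to run the standard sliding-of-balls argument of Berestycki--Nirenberg--Caffarelli adapted to the fractional setting, with the maximum principle in sets of small measure serving as the replacement for the strong maximum principle step. First I would observe that at $t=0$ the conclusion $z_0 = z \le v$ in $B$ is given by hypothesis, and it is in fact strict in $B \cap \{z > 0\}$: indeed $v - z$ is a classical supersolution of a linear equation $(-\Delta)^s(v-z) \ge g(v) - g(z) = c(x)(v-z)$ in $B \cap \{z > 0\}$ with $c \in L^\infty$ (using the local Lipschitz bound for $g$ on the relevant bounded range), and $v - z \ge 0$ in all of $\R^N$; the strong maximum principle (Proposition~\ref{prop: strong}) then forces $v - z > 0$ wherever $z > 0$, while on $\{z \le 0\} \cap B$ one has $v - z \ge v > 0$ trivially. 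So $z_0 < v$ throughout $B_0 = B$ (recalling $v = u$ here). Next I would set
\[
\mathcal{T} := \left\{ \tau \in [0,T] : z_t < u \text{ in } B_t \text{ for all } t \in [0,\tau] \right\},
\]
which is nonempty and relatively closed by continuity of the family $A(t)$, the continuity of $z$ and $u$, and the fact that $z_t \le 0 \le u$ on $\partial B_t$ (so the inequality cannot first be violated at a boundary point by a crude argument, but see the obstacle below). The goal is to show $\mathcal{T}$ is also open, hence $\mathcal{T} = [0,T]$.

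For openness, suppose $\tau \in \mathcal{T}$ with $\tau < T$. The function $z_\tau$ solves $(-\Delta)^s z_\tau \le g(z_\tau)$ in $B_\tau \cap \{z_\tau > 0\}$ (the subsolution property is invariant under Euclidean motions since $(-\Delta)^s$ is), $z_\tau \le 0$ in $\R^N \setminus B_\tau$, and $z_\tau < u$ in $B_\tau$. I would first upgrade this to: there exists $\eta > 0$ such that $z_\tau \le u - \eta$ on the compact set $\overline{B_\tau} \cap \{z_\tau \ge 0\}$ — but this is exactly what can fail, since $\{z_\tau = u\}$ could a priori be approached. The correct route, following the local proof, is to split: on the closed set $K := \overline{B_\tau} \cap \{z_\tau \ge \varepsilon_0\}$ for small $\varepsilon_0 > 0$, the strong maximum principle applied to $u - z_\tau$ (linear equation with bounded zeroth-order coefficient, nonnegative in $\R^N$, not identically zero) gives $u - z_\tau > 0$ on $K$, hence $u - z_\tau \ge \eta$ on $K$ by compactness. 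For $t$ close to $\tau$, the set $\overline{B_t} \cap \{z_t \ge \varepsilon_0\}$ is within Hausdorff distance $o(1)$ of $K$, so uniform continuity of $u$ and $z$ keeps $u - z_t > 0$ there. It remains to handle the thin region $\{0 < z_t < \varepsilon_0\} \cap B_t$, which has small measure provided $\varepsilon_0$ is small (here one uses that $\{z_\tau = 0\}$ has measure zero, or more honestly that $z_\tau < u$ forces $\{z_\tau > 0\}$ to be a fixed open set and the sublevel sets shrink) — and on this set $\{z_t > 0\}$, the difference $u - z_t$ is a supersolution of a linear equation and is nonnegative on the complement; by the maximum principle in sets of small measure (\cite[Proposition 2.2]{FeWa}), $u - z_t \ge 0$ there. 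Combining, $z_t < u$ in $B_t$ for all $t$ in a neighborhood of $\tau$, so $\mathcal{T}$ is open.

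The main obstacle is the delicate book-keeping at the interface $\{z_t = 0\}$ and near it: unlike the local case, one cannot localize the operator, so the ``maximum principle in sets of small measure'' must genuinely be invoked with a set whose smallness is controlled uniformly as $t$ varies near $\tau$, and one must be careful that $u - z_t \ge 0$ holds on the entire complement of the small set (not just its boundary) — here the sign conditions $z_t \le 0$ outside $B_t$ and $z_t \le 0$ on $\{z_t \le 0\}$, together with $u \ge 0$, cover the complement. A secondary technical point is justifying that the zeroth-order coefficient $c(x) = (g(v(x)) - g(z(x)))/(v(x)-z(x))$ (interpreted appropriately where $v = z$, though on the relevant set $v > z$) lies in $L^\infty$ with $c \cdot (v - z) \in \mathcal{C}$, so that Proposition~\ref{prop: strong} applies verbatim; this follows from the local Lipschitz continuity of $g$ since all values of $v, z$ on the compact set $\overline{B} \subset D$ lie in a bounded interval. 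Once openness and closedness of $\mathcal{T}$ are established, connectedness of $[0,T]$ finishes the proof.
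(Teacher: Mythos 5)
The paper omits the proof of this lemma, noting that it ``is very similar to the one of \cite[Lemma 3.1]{BCNCPAM}'' and ``uses the fact that $(-\Delta)^s$ is invariant under rigid motions and the strong maximum principle.'' The intended argument is the sliding-of-balls contradiction: let $T^*$ be the supremal $\tau$ up to which the strict inequality holds; if $T^*<T$, continuity gives the non-strict inequality $z_{T^*}\le v$ in $\overline{B_{T^*}}$, an interior touching point $p$ would necessarily lie in the open set $B_{T^*}\cap\{z_{T^*}>0\}$ (since $v>0$ there), and Proposition~\ref{prop: strong} then forces $v\equiv z_{T^*}$ in $\R^N$, which is impossible because $z_{T^*}\le0<v$ outside $\overline{B_{T^*}}$. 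The nonlocal version is actually \emph{shorter} than the local one, since the strong maximum principle is automatically global. Openness/propagation is pure compactness. The maximum principle in sets of small measure plays no role.

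Your proof has the right skeleton but misassigns the roles of the two ingredients, and one step is a genuine gap. Your assertion that $\mathcal{T}$ is ``relatively closed by continuity'' is not correct as stated: if $\tau_n\in\mathcal{T}$ with $\tau_n\nearrow\tau$, continuity and the boundary sign give only $z_\tau\le u$ in $\overline{B_\tau}$, since a limit of strict inequalities need not be strict. Upgrading this to the strict inequality $z_\tau<u$ in $B_\tau$ (i.e. $\tau\in\mathcal{T}$) is exactly the step where the strong maximum principle is needed, via the observation that an interior touching point would lie at positive height (because $u>0$ in $D$) and hence inside the set where the differential inequality holds. As written, the closedness step is unjustified, and it is the heart of the lemma. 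You do invoke Proposition~\ref{prop: strong} at $t=0$ and (implicitly) inside the openness argument, but that does not repair the closedness claim; also, at $t=0$ you should explicitly rule out the alternative $v\equiv z$ in $\R^N$, which is excluded because $z\le0<v$ somewhere.

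Conversely, your openness argument is considerably more elaborate than necessary, and the small-measure maximum principle should not enter at all. If $\tau\in\mathcal{T}$, then $u-z_\tau>0$ on all of $\overline{B_\tau}$ (strictly in $B_\tau$ by membership in $\mathcal{T}$, and on $\partial B_\tau$ because $z_\tau\le0<u$); compactness gives a positive lower bound $\eta$, and since $z_{t'}\to z_\tau$ uniformly on compact neighborhoods of $\overline{B_\tau}$, the strict inequality persists on $\overline{B_{t'}}$ for $t'$ near $\tau$. There is no set ``$\{z_\tau=u\}$'' to worry about being approached in this step; that concern belongs to the non-strict limit in the closedness step. The decomposition into $\{z_\tau\ge\varepsilon_0\}$ plus a thin region controlled by \cite[Proposition 2.2]{FeWa} introduces technicalities (uniformity of the small measure as $t$ varies) that the simple compactness argument avoids entirely. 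I would reorganize so that the strong maximum principle carries the closedness upgrade and openness is handled by compactness alone, which is exactly the route the paper has in mind.
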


The proof of Lemma~\ref{lem: 3.1 BCN}
is very similar to the one of \cite[Lemma 3.1]{BCNCPAM} and thus is omitted. It uses the fact that $(-\Delta)^s$ is invariant under rigid motions and the strong maximum principle.


Exploiting Lemma \ref{lem: 3.1 BCN} we can deduce a lower estimate for $u$ far away from the boundary $\partial \Omega$.

\begin{lemma}\label{lem: 3.2 BCN}
There exist $\eps_1$, $R_0>0$ with $R_0$ depending only on $N$ and $\delta_0$ (recall assumption ($f2$)) such that
\[
u(x) > \eps_1 \qquad \text{if $\dist(x,\partial \Omega) > R_0$}.
\]
\end{lemma}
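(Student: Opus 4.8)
\textbf{Plan of proof for Lemma~\ref{lem: 3.2 BCN}.}
The idea is to use Lemma~\ref{lem: 3.1 BCN} to slide a suitable subsolution, supported on a fixed ball, into any point of $\Omega$ that is sufficiently far from the boundary. First I would fix a radius $R_0>0$ and construct, by means of assumption ($f2$), a ``torsion-type'' subsolution $z$ on a ball $B=B_{R_0/2}(x_0)$: since $f(t)\ge\delta_0 t$ for $t\in[0,t_0]$, it suffices to find a function $z$ with $0<z\le t_0$, vanishing outside $B$, such that $(-\Delta)^s z\le\delta_0 z$ in $B\cap\{z>0\}$. A natural candidate is a small multiple $z=\eta\,\psi$ of the first Dirichlet eigenfunction $\psi>0$ of $(-\Delta)^s$ in $B$, normalised so that $\max\psi$ is small; then $(-\Delta)^s z=\lambda_1(B)\,z$, and by choosing $R_0$ large enough that $\lambda_1(B_{R_0/2})=c_N R_0^{-2s}\le\delta_0$ (here $\lambda_1$ scales like $R_0^{-2s}$, which fixes the dependence of $R_0$ on $N$ and $\delta_0$), we get $(-\Delta)^s z=\lambda_1 z\le\delta_0 z\le f(z)$ in $B$, as required. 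Shrinking $\eta$ further we may also guarantee $z\le t_0\le 1<\mu=1$, and in particular $z\le u$ on $\overline{B}$ whenever $\overline B\subset\Omega$, because $u$ is a positive solution, hence a positive supersolution, of $(-\Delta)^s u=f(u)$ with $f(u)\ge\delta_0 u\cdot\mathbf{1}_{\{u\le t_0\}}$ near the boundary of $\{z>0\}$ --- more carefully, one compares $z$ and $u$ directly on $\overline B$ using the strong maximum principle as in Lemma~\ref{lem: 3.1 BCN} (taking $A(t)\equiv\mathrm{Id}$), which yields $z<u$ on $B$ as a base case.

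Next, given any point $p\in\Omega$ with $\dist(p,\partial\Omega)>R_0$, the ball $B_{R_0/2}(p)$ is compactly contained in $\Omega$, and I would choose a continuous one-parameter family of translations $\{A(t):0\le t\le T\}$ with $A(0)=\mathrm{Id}$, $A(T)B=B_{R_0/2}(p)$, and $A(t)\overline B\subset\Omega$ for all $t$; such a family exists because $\Omega$, being an epigraph, is in particular connected and open, so one can move the center of $B$ from $x_0$ to $p$ along a path staying at distance $>R_0/2$ from $\partial\Omega$ (if the path needs to travel ``upward'', this is automatic since $\Omega$ is an epigraph). Applying Lemma~\ref{lem: 3.1 BCN} along this family gives $z_T(x)=z(A(T)^{-1}x)<u(x)$ in $B_{R_0/2}(p)$, and in particular, evaluating at $x=p$,
\[
u(p)>z_T(p)=z(x_0)=\eta\,\psi(x_0)=:\eps_1>0,
\]
where $\eps_1$ depends only on $N$, $\delta_0$, $t_0$ (through the normalisation of $z$), but not on $p$ nor on the particular epigraph. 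This proves the claim.

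\textbf{Main obstacle.} The delicate point is verifying that Lemma~\ref{lem: 3.1 BCN} genuinely applies, i.e. that $z$ is an admissible subsolution with $z\le 0$ outside $B$ and $(-\Delta)^s z\le g(z)$ only in $B\cap\{z>0\}$ with $g=f$ the (globally Lipschitz) nonlinearity: one must check that $f(z)\ge\delta_0 z$ holds throughout the relevant range of $z$, which is why the normalisation $\eta\,\|\psi\|_{L^\infty}\le t_0$ is imposed, and that the comparison function $u$ is a supersolution in the required sense near $\partial\{z>0\}$ (there $u$ may exceed $t_0$, but then $f(u)\ge 0\ge(-\Delta)^s$ of the relevant test function is not what is needed --- rather one uses $f(u)\ge f(z)$ by monotonicity only where both lie in $[t_1,\mu]$, which fails in general). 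The clean way around this, exactly as in \cite{BCNCPAM}, is to note that $u>0$ in $\overline B\subset\Omega$ and that the hypotheses of Lemma~\ref{lem: 3.1 BCN} were formulated precisely so that one only needs $z\le v$ on $B$ and $z$ a subsolution of the \emph{same} equation $(-\Delta)^s z\le f(z)$ on $B\cap\{z>0\}$; the compatibility $(-\Delta)^s z=\lambda_1 z\le\delta_0 z\le f(z)$ holds on all of $\{z>0\}=B$ by the choice of $R_0$, and $z\le u$ on $\overline B$ at $t=0$ follows from a one-shot strong-maximum-principle comparison since $z$ is a strict subsolution with $z=0<u$ on $\partial B$. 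Getting these compatibility conditions stated correctly, together with pinning down the scaling $\lambda_1(B_{R_0/2})\sim R_0^{-2s}$ to extract the stated dependence of $R_0$ on $N$ and $\delta_0$, is the only real content; the sliding itself is then immediate from Lemma~\ref{lem: 3.1 BCN}.
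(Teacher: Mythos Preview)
Your approach is essentially the one the paper has in mind (the paper omits the proof but points to \cite[Lemma~3.2]{BCNCPAM} and singles out precisely the scaling $\lambda_1(B_R)\to 0$ as $R\to\infty$): build a subsolution from a small multiple of the first Dirichlet eigenfunction on a ball of radius $\sim R_0$, choose $R_0$ so that $\lambda_1\le\delta_0$, and then slide it via Lemma~\ref{lem: 3.1 BCN}.

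Two small points to tidy up. First, your justification of the base case $z\le u$ on the initial ball $B$ is circular: Lemma~\ref{lem: 3.1 BCN} \emph{assumes} $z\le v$ on $B$ as a hypothesis, so you cannot invoke it (even with $A(t)\equiv\mathrm{Id}$) to derive that inequality, and a ``one-shot strong-maximum-principle comparison'' does not give it either without some starting ordering. The standard fix is the obvious one: fix once and for all a ball $B=B_{R_0/2}(x_0)$ with $\overline B\subset\Omega$, set $c:=\min_{\overline B}u>0$ (compactness), and choose $\eta$ so small that $\eta\,\|\psi\|_{L^\infty}\le\min\{c,\,t_0\}$; then $z\le u$ on $B$ and $z\le t_0$ hold simultaneously. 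Second, with this correction $\eps_1=\eta\,\psi(x_0)$ depends on $u$ through $c$, so your claim that $\eps_1$ depends only on $N$, $\delta_0$, $t_0$ and ``not on the particular epigraph'' is too strong. This is harmless for the lemma as stated, which only asserts the universal dependence for $R_0$, not for $\eps_1$.
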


The proof of Lemma~\ref{lem: 3.2 BCN} here is a simple extension to 
that in \cite[Lemma 3.2]{BCNCPAM} and therefore is omitted. 

We stress that if $\lambda_1(B_R)$ denotes the first eigenvalue of $(-\Delta)^s$ in~$B_R$
with homogeneous Dirichlet boundary condition, then
\[
\lambda_1(B_R) \to 0 \qquad \text{as $R \to +\infty$},
\]
by scaling.

We are now ready to prove the counterpart of Lemma 3.3 in \cite{BCNCPAM}. 
We observe that in this framework the ``local" argument cannot be directly extended, 
since the proof 
of Lemma 3.3 in \cite{BCNCPAM} heavily relies on
local properties of functions whose Laplacian has a sign, 
and this technique does not work in a nonlocal setting. 
Therefore, to overcome such difficulty,
we have to modify the approach in the following way.

\begin{lemma}\label{lem: 3.3 BCN}
Let $\eps_1$, $R_0>0$ be given by Lemma~\ref{lem: 3.2 BCN}.
Let~$y \in \Omega$ with $\dist(y,\partial \Omega)>R_0$, so that $u(y) \ge \eps_1$. Let $\eps>0$ so small that $(1+\eps)u(y) <1$. Let
\[
 \delta_{\eps,y} := \min \left\{ f(t): \;t \in [\eps_1,(1+\eps) u(y)] \right\}>0.
\]
Then, there exists $C_1>0$ depending only on $s$ and $N$ such that 
\[
C_1 \delta_{\eps,y} \le \left[ \dist(y,\partial \Omega)-R_0 \right]^{-2s}.
\]
\end{lemma}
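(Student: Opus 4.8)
The plan is to use the family of motions in Lemma~\ref{lem: 3.1 BCN} to slide a fixed subsolution from a point far inside $\Omega$ towards $y$, and then to note that the subsolution cannot grow to $(1+\varepsilon)u(y)$ unless the radius of the sliding ball is large, which forces $\delta_{\varepsilon,y}$ to be small. Concretely, set $R := \dist(y,\partial\Omega)$, pick a point $y_0 \in \Omega$ with $\dist(y_0,\partial\Omega) > R_0$, and connect $y_0$ to $y$ by a segment (a one-parameter family of translations $A(t)$, $A(0)=\mathrm{Id}$). The ball we slide will be $B = B_{R - R_0}(y_0)$: since $\dist(y_0,\partial\Omega)>R_0$ we have $\overline{B}\subset\Omega$ along the whole path as long as we keep the center on the segment, because the distance of the center to $\partial\Omega$ stays $>R_0$ (this is where the choice $R-R_0$ for the radius and the hypothesis $R>R_0$ enter; one should check the geometry, but it is elementary).

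Next I would construct the subsolution $z$ on $B$. On the annular region $\{0<z\}\cap B$ we want $(-\Delta)^s z \le g(z)$ with $g = f$; since on the relevant range of values $f \ge \delta_{\varepsilon,y}$ (by definition of $\delta_{\varepsilon,y}$, on $[\varepsilon_1,(1+\varepsilon)u(y)]$), it suffices to take $z$ to solve, in $B_{R-R_0}$,
\begin{equation*}
(-\Delta)^s z = \delta_{\varepsilon,y}\,,\qquad z=0 \text{ in }\R^N\setminus B_{R-R_0}\,,
\end{equation*}
i.e. $z = \delta_{\varepsilon,y}\,\Phi_{R-R_0}$ where $\Phi_\rho$ is the torsion function of $(-\Delta)^s$ on $B_\rho$. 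By scaling, $\Phi_\rho(x) = \rho^{2s}\Phi_1(x/\rho)$, so $\max_{B_\rho}\Phi_\rho = C_1^{-1}\rho^{2s}$ for a dimensional constant $C_1>0$ depending only on $s,N$. One checks $z$ is a subsolution in the sense required by Lemma~\ref{lem: 3.1 BCN}: $z \le 0$ outside $B$; and $z \le u = v$ on $B_{y_0}$ provided $\delta_{\varepsilon,y}(R-R_0)^{2s}C_1^{-1}$ is not too large — but actually Lemma~\ref{lem: 3.1 BCN} only requires $z \le v$ on the \emph{initial} ball $B$, and there one uses $u \ge \varepsilon_1$ (Lemma~\ref{lem: 3.2 BCN}, since $\dist(\cdot,\partial\Omega)>R_0$ on $B$) together with — here is the one genuinely delicate bookkeeping point — the requirement that $z$ stay below $u$; if $z$ ever reaches a value where $f(z)<\delta_{\varepsilon,y}$ we would lose the subsolution inequality, so we must a priori truncate or argue that as long as $z < (1+\varepsilon)u(y)$ everything is consistent. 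I expect this consistency check — keeping $z$ in the range $[0,(1+\varepsilon)u(y)]$ where $f\ge\delta_{\varepsilon,y}$ while sliding — to be the main obstacle, and the cleanest fix is to first assume by contradiction that $C_1\delta_{\varepsilon,y} > (R-R_0)^{-2s}$, i.e. $\max z = C_1^{-1}\delta_{\varepsilon,y}(R-R_0)^{2s} > 1 > (1+\varepsilon)u(y)$ is \emph{not} the way; rather assume $C_1^{-1}\delta_{\varepsilon,y}(R-R_0)^{2s}$ exceeds $(1+\varepsilon)u(y) - u(y_0)$ or simply exceeds $u(y)$, run the sliding, and derive a contradiction with $u<1$ or with the bound $z<v$.

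Finally, applying Lemma~\ref{lem: 3.1 BCN} with $T$ equal to the parameter at which $A(T)$ carries the center $y_0$ to $y$, we get $z_T(x) < u(x)$ for all $x\in B_T = B_{R-R_0}(y)$; evaluating at $x=y$ (the center of $B_T$, where $z_T$ attains its maximum $\max\Phi_{R-R_0}\cdot\delta_{\varepsilon,y} = C_1^{-1}\delta_{\varepsilon,y}(R-R_0)^{2s}$) yields
\begin{equation*}
C_1^{-1}\,\delta_{\varepsilon,y}\,\big(\dist(y,\partial\Omega)-R_0\big)^{2s} \;<\; u(y)\,,
\end{equation*}
hence $C_1\delta_{\varepsilon,y} \le u(y)\big(\dist(y,\partial\Omega)-R_0\big)^{-2s} \le \big(\dist(y,\partial\Omega)-R_0\big)^{-2s}$ since $u(y)<1$ by Theorem~\ref{thm: main 1}-($i$). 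This is exactly the claimed inequality (after relabeling $C_1$), and the dependence of $C_1$ only on $s$ and $N$ is transparent from the scaling of the torsion function. The remaining care is just to present the argument so that the subsolution inequality is used only where $z$ takes values in $[\varepsilon_1,(1+\varepsilon)u(y)]$, which the contradiction setup guarantees.
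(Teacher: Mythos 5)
The core idea you lean on---take the scaled torsion function $z=\delta_{\eps,y}\,\Phi_{R-R_0}$, observe $\max z = C_1^{-1}\delta_{\eps,y}(R-R_0)^{2s}$, and force this to stay below $u$---is the right ingredient. But your plan hinges on feeding $z$ into Lemma~\ref{lem: 3.1 BCN} as a subsolution, and that step fails in a way your ``delicate bookkeeping'' does not repair, because you have mislocated the obstacle. Lemma~\ref{lem: 3.1 BCN} requires $(-\Delta)^s z \le f(z)$ on all of $B\cap\{z>0\}$, which for the torsion function is the whole ball $B$. Near $\partial B$ the value $z(x)$ tends to $0$, hence drops below $\eps_1$, and there the inequality $\delta_{\eps,y}\le f(z(x))$ has no reason to hold: $\delta_{\eps,y}$ is defined as the minimum of $f$ on $[\eps_1,(1+\eps)u(y)]$, not on $[0,(1+\eps)u(y)]$, and $(f2)$ only gives $f(t)\ge\delta_0 t\to 0$ as $t\to0$. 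Your comment that one must ``keep $z$ in the range $[0,(1+\eps)u(y)]$ where $f\ge\delta_{\eps,y}$'' is not correct about that range, and your proposed contradiction set-up (assuming $\max z$ exceeds $u(y)$) controls only the \emph{top} of the range of $z$; it does nothing about the \emph{bottom}, which is where the subsolution property breaks. Scaling $z$ by a constant $\tau<1$ does not help either, since the problematic region is where $\tau z$ is small, and $f$ there can still be below $\tau\delta_{\eps,y}$.

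The paper's argument avoids constructing a global subsolution altogether. Instead of sliding a ball, it keeps the ball centered at $y$ and slides a multiplicative parameter $\tau$: starting from $\tau z<u$ (guaranteed by Lemma~\ref{lem: 3.2 BCN} on $B_R(y)\subset\{\dist(\cdot,\partial\Omega)>R_0\}$), it increases $\tau$ to the critical value $\bar\tau$ at which $\bar\tau z$ touches $u$ at some interior point $x_0$, shows $\bar\tau<1$, and then argues only \emph{locally near} $x_0$. The key observation is that the inequality $f(\cdot)\ge\delta_{\eps,y}$ is applied to $u$, not to the barrier: in a small neighbourhood $U''$ of $x_0$ one has $u\in[\eps_1,(1+\eps)u(y)]$ by continuity, so $(-\Delta)^s u=f(u)\ge\delta_{\eps,y}$ there, while $(-\Delta)^s(\bar\tau z)=\bar\tau\delta_{\eps,y}<\delta_{\eps,y}$. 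Thus $(-\Delta)^s(\bar\tau z-u)<0$ in $U''$, $\bar\tau z-u\le 0$ everywhere, and $(\bar\tau z-u)(x_0)=0$, which the strong maximum principle turns into $\bar\tau z\equiv u$---contradiction. This is a touching-point comparison with a supersolution $u$ at the one place where it matters; no subsolution inequality for $z$ itself is ever needed, which is exactly what sidesteps the gap in your plan. If you want to salvage your sliding formulation, you would have to replace ``$z$ is a subsolution of the semilinear equation'' with ``$\bar\tau z-u$ has strictly negative $(-\Delta)^s$ near the touching point'', which is already the paper's argument in a different dress.
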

 
\begin{remark}
Notice that the existence of $\eps$ is guaranteed by the strict upper estimate $u <1$ 
in $\Omega$, thanks to 
Theorem \ref{thm: main 1}-($i$). The crucial fact in the lemma is that $C_1$ does not depend on $y$ 
and~$\eps$.
\end{remark}

\begin{proof}[Proof of Lemma \ref{lem: 3.3 BCN}]
Let $v$ be the solution to 
\[
\begin{cases}
(-\Delta)^s v=1 & \text{in $B_1$}, \\
v = 0 & \text{in $\R^N \setminus B_1$}.
\end{cases}
\]
We claim that the thesis is true with $C_1:= \max_{B_1} v = v(0)$. If not, we have
\[
C_1 \delta_{\eps,y} > \left[ \dist(y,\partial \Omega)-R_0 \right]^{-2s},
\]
and hence there exists $R>0$ such that
\begin{equation}\label{def R}
C_1 \delta_{\eps,y} > \frac{1}{R^{2s}} > \left[ \dist(y,\partial \Omega)-R_0 \right]^{-2s}.
\end{equation}
Notice in particular that $R+R_0< \dist (y,\partial \Omega)$, and so
\begin{equation}\label{INA:A}
B_R(y) \subset \{ \dist (x,\partial \Omega) > R_0\}.\end{equation} Let 
\[
z(x):= R^{2s} \delta_{\eps,y} v\left( \frac{x-y}{R}\right).\]
Then, by scaling, we have that
\[ \begin{cases}
(-\Delta)^s z=\delta_{\eps,y} & \text{in $B_R(y)$}, \\
z = 0 & \text{in $\R^N \setminus B_R(y)$},
\end{cases}
\]
and the maximum of $z$ is $z(y)= R^{2s} \delta_{\eps,y} C_1$. 

By \eqref{INA:A}, we are now in the position of exploiting
Lemma \ref{lem: 3.2 BCN}: in this way,
we have $\tau z < u$ in $B_R(y)$ provided $\tau >0$ is sufficiently small. 
Now we increase $\tau$ till we obtain a touching point, namely we
let 
\[
\bar \tau:= \inf\{\tau>0:  \tau z(x_0) = u(x_0) \text{ for some $x_0 \in \overline{B_R(y)}$}\}.
\]
As a matter of fact, since~$z=0<u$ on~$\partial B_R(y)$,
we have that~$x_0$ lies in the interior of the ball~$ B_R(y)$.

By definition, and since $z$ is radial and radially decreasing with respect to $y$, 
\[
u(x_0) = \bar \tau z(x_0) \le\bar  \tau z(y) = \bar \tau C_1 \delta_{\eps,y} R^{2s} \le u(y) <1.
\]
Using this we infer that $u(x_0) \le u(y)$, and moreover $
\bar \tau C_1 \delta_{\eps,y} R^{2s} <1$, which together with \eqref{def R} implies that~$\bar \tau <1$. 

We are ready to complete the contradiction argument: since $u(x_0) \le u(y)$, by continuity there exists a neighbourhood $U$ of $x_0$ such that 
\begin{equation}\label{AKA:1}
u \le (1+\eps)u(y) \qquad \text{in $U$}.
\end{equation}
Also, by Lemma~\ref{lem: 3.2 BCN} and~\eqref{INA:A}, we have that~$u(x_0)>\eps_1$
and so, by continuity, we have that
\begin{equation}\label{AKA:2}{\mbox{$u\ge\eps_1$ in a small 
neighborhood~$U'$ of~$x_0$.}}\end{equation}
Thus, in~$U'':=U\cap U'$ we have that both~\eqref{AKA:1} and~\eqref{AKA:2}
are satisfied.
Therefore, by the definition of $\delta_{\eps,y}$,
\[
(-\Delta)^s u = f(u) \ge \delta_{\eps,y} \qquad \text{in $U''$}.
\]
As a consequence, since $\bar \tau <1$,
\[
\begin{cases}
(-\Delta)^s (\bar \tau z-u) \le \bar \tau \delta_{\eps,y} -\delta_{\eps,y} < 0 & \text{in $U''$,}\\
(\bar \tau z - u) \le 0  &\text{in $\R^N$,} \\
(\bar \tau z-u)(x_0)=0,
\end{cases}
\]
which by the strong maximum principle implies that~$\bar \tau z \equiv u$. 
This is a contradiction with the fact that $u  >0 = \bar \tau z$ on $\Omega \setminus B_R(y)$.
\end{proof}

\begin{proof}[Proof of Theorem \ref{thm: main 1}-($ii$)]
Let us assume by contradiction that there exist a sequence of
points~$\{y_n\} \subset \Omega$ and some $\rho >0$ such that  
\[
\dist(y_n, \partial \Omega) \to +\infty \quad \text{and} \quad u(y_n) \in (\eps_1,1-\rho].
\]
Then we can choose $\eps>0$ independent of $n$ such that $(1+\eps)(1-\rho)<1$ in 
Lemma~\ref{lem: 3.3 BCN}, deducing that
\begin{equation}\label{AK:Apq}
0 \le \min_{[\eps_1,(1+\eps) u(y_n)]} f \le C_1^{-1} \left[ \dist(y_n,\partial \Omega)-R_0 \right]^{-2s}  \to 0
\end{equation}
as $n \to +\infty$. 

On the other hand, by assumption~($f1$),
\[
 \min_{[\eps_1,(1+\eps) u(y_n)]} f \ge  \min_{[\eps_1,(1+\eps) (1-\rho)]} f >0
 \]
and this is in contradiction with~\eqref{AK:Apq}.
\end{proof}

\subsection{Boundary behaviour of the solution: lower estimate}\label{sec: growth}

In this subsection we describe the growth of the solution near the boundary of $\Omega$, proving point ($iii$) of Theorem \ref{thm: main 1}. We point out that the proof is completely different with respect to the one in \cite{BCNCPAM} (proof of Assertion (c) in Theorem 1.2), where an argument based on the construction of an explicit local barrier is used. Dealing with a nonlocal operator, such an approach fails, and we should produce a global barrier on which explicit computations are much more involved. To overcome the problem, we replace the barrier-argument with a convenient geometric constructions, which permits to apply iteratively the Harnack inequality. This approach seems to be applicable to a wide class of operators.

\begin{proof}[Proof of Theorem \ref{thm: main 1}-($iii$)]
Up to a translation, it is not restrictive to suppose that $0 \in \partial \Omega$. We start with three simple consequences of the fact that $\Omega = \{x_N > \varphi(x')\}$ is a globally Lipschitz epigraph (recall that $K$ denotes the Lipschitz constant of $\varphi$).\smallskip
 
1) Let 
$$\Sigma_\beta:= \{|x'|< (\tan{\beta}) x_N\}$$ be an infinite open 
cone, with $0< \beta<\pi/2$. It is possible to choose $\beta$, depending only 
on the Lipschitz constant $K$, so that if the vertex of $\Sigma_\beta$ is 
translated to any point of $\pa \Omega$, then the cone is included in $\Omega$: that is there exists $\bar \beta \in (0,\pi/4)$ such that
\[
x_0 + \Sigma_\beta \subset \Omega \qquad \text{for every $x_0 \in \partial \Omega$ and for every $0 < \beta < \bar \beta$}.
\] 
In what follows we fix $\beta \in (0,\bar \beta)$.\smallskip

2) By the Lipschitz continuity of $\varphi$, and 
recalling that $0 \in \partial \Omega$ (so that $0 = \varphi(0')$),
from Lemma \ref{lem: 3.2 BCN} and Theorem \ref{thm: main 1}-($ii$) it follows the existence of $h_1, h_2>0$ such that 
\begin{equation}\label{5.999}
\begin{split}
& u(x) > \eps_1 \qquad \text{if $x_N > K |x'| + h_1 \ge  \varphi(x')+ h_1$},
\\ {\mbox{and }}\quad&
u(x) > 2\eps_1 \qquad \text{if $x_N > K |x'| + h_2 \ge \varphi(x')+ h_2$}
.\end{split}\end{equation}
Here we implicitly suppose that $\eps_1 < 1/2$; if this were
not the case, we can simply replace $\eps_1$ with a smaller quantity. 
If necessary replacing $h_2$ with a larger quantity, we can also suppose that 
\begin{equation}\label{additional}
\frac{\log \left(\frac{h_2}{1+\sin \beta}\right)}{ \log \left(1+ \tan \beta\right)}- \frac{\log h_1 }{ \log \left(1+ \tan \beta\right)}  >1.
\end{equation}\smallskip

3) Finally we observe that
\[
\{ x_N \le \varphi(x') + h_2\} \cap \Sigma_\beta \subset \{x_N \le K |x'| + h_2\} \cap \Sigma_\beta,
\]
and $\{x_N \le K |x'| + h_2\} \cap \Sigma_\beta$ is bounded by construction.
\smallskip

We consider a point \begin{equation}\label{heart}
{\mbox{$x_0=(0',x_{0,N})$, with~$x_{0,N}\in (0,h_1)$.}}\end{equation}
Let $D$ be a bounded domain of $\R^N$ such that 
\[
\{x_N \le K |x'| + h_2\} \cap \Sigma_\beta \subset D \subset \Sigma_\beta,
\] 
with $\pa D \setminus \{0\}$ sufficiently smooth, see Figure~\ref{12345678}.

\begin{figure}
    \centering
    \includegraphics[width=11.8cm]{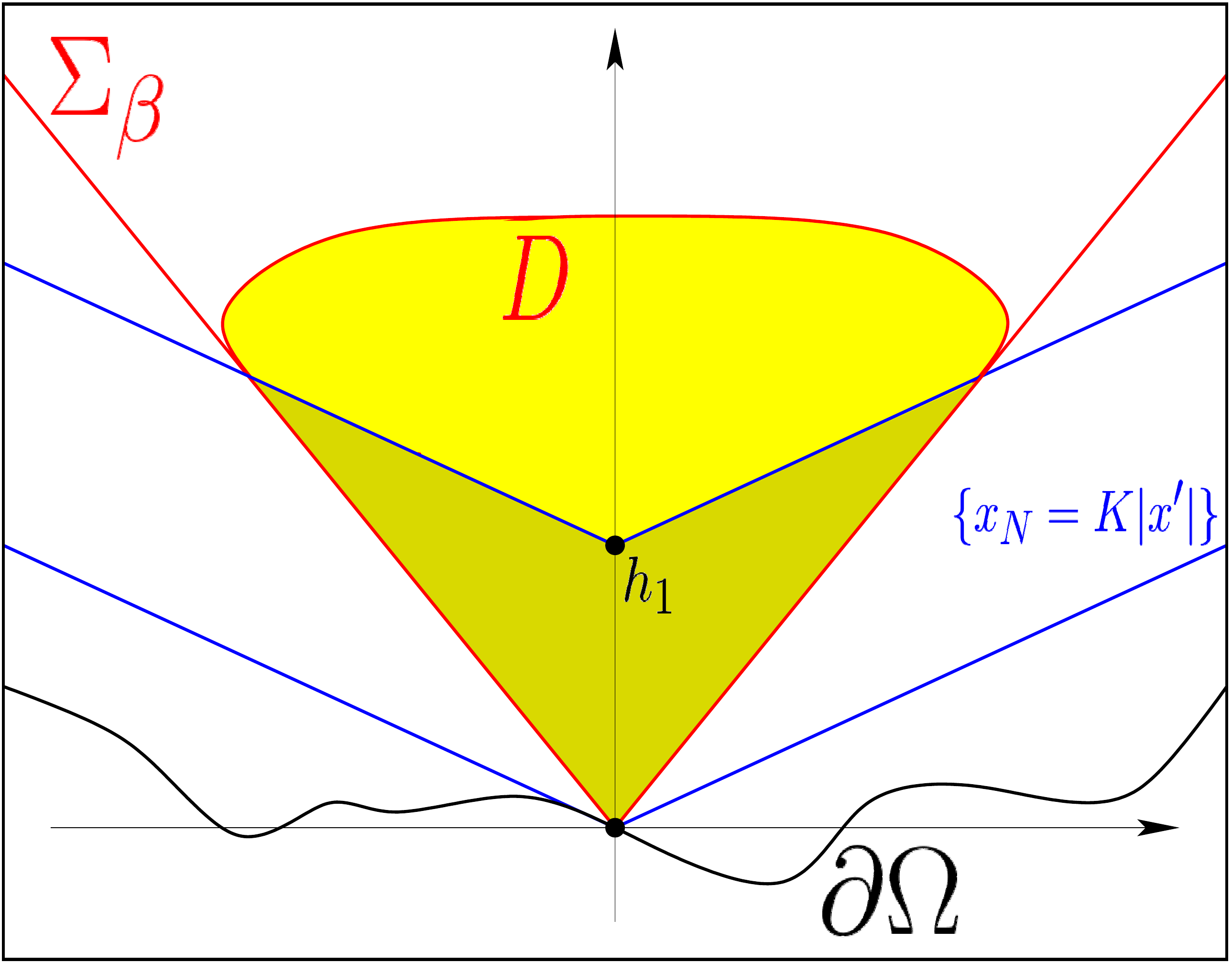}
    \caption{The set~$D$ in the proof of Theorem~\ref{thm: main 1}-($iii$).}
    \label{12345678}
\end{figure}

We introduce a continuous function $v$ satisfying
\begin{equation}\label{piupiuUJaaa:2}
\begin{cases}
(-\Delta)^s v = 0 & \text{in $D$} ,\\
v = 0 & \text{in $(\R^N \setminus D)\cap  \{x_N < K |x'| + h_1 \}$}, \\
v = \eps_1 & \text{in $(\R^N \setminus D)\cap  \{x_N > K |x'| + h_2\}$} ,\\
0 \le v \le \eps_1 & \text{in $\R^N \setminus D$}.
\end{cases}
\end{equation}
By the maximum principle, $v>0$ in $D$, and hence by compactness there exists $\bar C >0$ such that 
\begin{equation}\label{def bar C}
\text{$v (x)\ge \bar C$
for any~$x$
such that~$B_{h_1\sin\beta}(x)\subseteq{D}$
and~$x_N\ge  h_1$}.
\end{equation}
Also, by~\eqref{5.999}, we have that~$u \ge v$ in $\R^N \setminus D$.
Furthermore, $(-\Delta)^s u \ge 0$ in $\Omega \supset D$, 
thanks to assumption~($f1$) and
Theorem~\ref{thm: main 1}-($i$). Therefore,
by the
comparison principle in
Theorem \ref{thm: super max}, we find that 
\begin{equation}\label{OAJLMhJAaOIA1}
{\mbox{$v \le u$ in $\R^N$.
}}\end{equation}
Recalling~\eqref{heart}, let now~$r_0:= |x_0| \sin \beta = x_{0,N} \sin \beta$, and let us define
\[
r_{k+1}:= r_k(1+\tan \beta) \quad \text{and} \quad x_{k+1} = x_k + \left(0', \frac{r_k}{\cos \beta} \right) = \left(0', x_{k,N} + \frac{r_k}{\cos \beta} \right),
\] 
see Figure~\ref{FIGI2}.

\begin{figure}
    \centering
    \includegraphics[width=11.8cm]{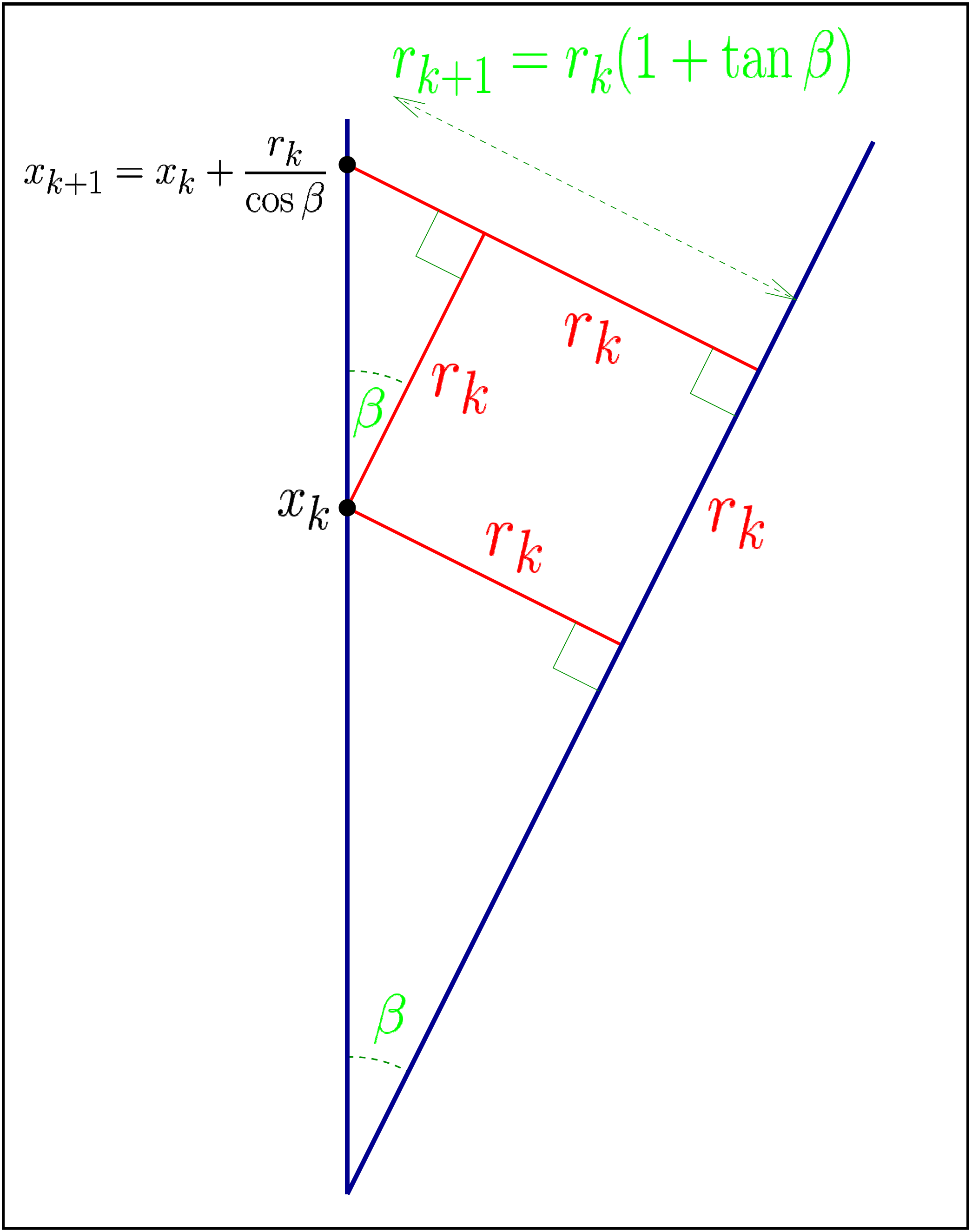}
    \caption{The geometry involved in the proof of Theorem~\ref{thm: main 1}-($iii$).}
    \label{FIGI2}
\end{figure}

In this way $B_{r_k}(x_k) \subset \Sigma_\beta$ for every $k$, and
\begin{equation}\label{radii}
r_k=r_0(1+\tan\beta)^k = x_{0,N} \,\sin \beta( 1+\tan\beta)^k.
\end{equation}
As a consequence
\begin{equation}\label{coordinates}
\begin{split}
x_{k,N} &= x_{0,N} + \sum_{i=0}^{k-1} (x_{i+1,N}-x_{i,N}) = x_{0,N}+ 
\sum_{i=0}^{k-1} x_{0,N} \,\tan \beta\,(1+\tan \beta)^i \\
&= x_{0,N}+ x_{0,N} \,\tan \beta\, \frac{(1+\tan \beta)^k-1}{\tan \beta} =  x_{0,N} \,
(1+\tan \beta)^k,
\end{split}
\end{equation}
for every $k \in \N$. 

We claim that 
\begin{equation}\label{croce}
\begin{split}
&{\mbox{there exists a first natural number $k_0$ depending on $x_0$ such that }}\\
&
x_{k_0} \in \{K |x'| + h_1<x_N\} \quad \text{and} \quad B_{r_{k_0}}(x_{k_0}) \subset \Sigma_\beta \cap B_{h_2} \subset D.
\end{split}\end{equation}
Thanks to \eqref{radii} and \eqref{coordinates}, and observing that by construction $x_{k}'=0'$ for every $k$, we see that $k_0 \in \N$ has to be the minimum natural number satisfying
\begin{equation}\label{iknHAHJAla1a}
x_{0,N}(1+\tan \beta)^{k_0} > h_1 \quad \text{and} \quad x_{0,N} (1+\sin \beta) ( 1+\tan\beta)^{k_0} <h_2, 
\end{equation}
that is
\begin{equation}\label{k_0}
\frac{\log \left(\frac{h_1}{x_{0,N}}\right) }{ \log \left(1+ \tan \beta\right)} \le k_0 \le  \frac{\log \left(\frac{h_2}{(1+\sin \beta)x_{0,N}}\right) }{ \log \left(1+ \tan \beta\right)}.
\end{equation}
Notice that such a $k_0 \in \N$ does exist, thanks to \eqref{additional}. This proves~\eqref{croce}.

Furthermore, we remark that 
\[
\frac{\log \left(\frac{h_1}{x_{0,N}}\right) }{ \log \left(1+ \tan \beta\right)} = \mu_1 - \nu_1 \log(x_{0,N}) \quad \text{and} \quad \frac{\log \left(\frac{h_2}{(1+\sin \beta)x_{0,N}}\right) }{ \log \left(1+ \tan \beta\right)} = \mu_2 - \nu_2 \log(x_{0,N})
\]
for suitable $\mu_1,\mu_2 \in \R$, and $\nu_1,\nu_2>0$, all depending only on $h_1$, $h_2$ and~$\beta$. 
This and~\eqref{k_0} say that
\begin{equation}\label{3.8bis}
\mu_1-\nu_1\,\log x_{0,N}\le k_0 \le\mu_2-\nu_2\,\log x_{0,N}.
\end{equation}
The previous construction implies that we have finite sequences of points $x_k$ and
radii $r_k$, with $k=0,\dots,k_0$, such that 
\begin{equation}\label{piupiuUJaaa}
B_{r_k}(x_k) \subset D\end{equation}
for every~$k=0,\dots,k_0$, and
\[
|x_{k+1}-x_{k}| = x_{k+1,N}-x_{k,N} = \frac{r_k}{\cos \beta} = \frac{r_{k+1}}{\cos \beta(1+\tan \beta)} =: \rho_\beta r_{k+1},
\] 
with $\rho_\beta <1$ independent of $k$. 

{F}rom~\eqref{piupiuUJaaa:2} and~\eqref{piupiuUJaaa}, we deduce that~$v$ 
is non-negative in $\R^N$ and $s$-harmonic in $B_{r_{k+1}}(x_{k+1})$, for any~$k=0,\dots,k_0-1$.
Hence,
we are in position to apply the Harnack inequality in~\cite[Theorem 5.1]{CaffSilvCPDE}, 
deducing that 
\[
v(x_{k+1}) \le \sup_{B_{\rho_\beta r_{k+1}}(x_{k+1})} v \le \tilde C  
\inf_{B_{\rho_\beta r_{k+1}}(x_{k+1})} v  \le \tilde C v(x_k)
\]
for every $k=0,\dots, k_0-1$, where $\tilde C>0$ is a positive constant depending 
only on $\beta$. 

Now we observe that
$$ r_{k_0} = x_{0,N} \,\sin \beta( 1+\tan\beta)^{k_0} \ge h_1\sin\beta,$$
thanks to~\eqref{radii} and~\eqref{iknHAHJAla1a}. Similarly,
$$ x_{k_0,N} =  x_{0,N} \,
(1+\tan \beta)^{k_0}\ge h_1,$$
thanks to~\eqref{coordinates} and~\eqref{iknHAHJAla1a}.

As a consequence of this and of~\eqref{def bar C}, we obtain that~$v(x_{k_0})\ge\bar C$.
Therefore, using \eqref{OAJLMhJAaOIA1} and~\eqref{3.8bis}, we obtain 
\begin{align*}
u(x_0) & \ge v(x_0)  \ge \frac{1}{\tilde C^{k_0}} v(x_{k_0}) \ge \frac{\bar C}{e^{k_0 \log \tilde C}}
\\
&\ge \frac{\bar C}{e^{\mu_2 \log \tilde C} \cdot e^{-\nu_2 \log \tilde C \log x_{0,N}}} = C x_{0,N}^{\bar \rho} 
= C (x_{0,N}-\varphi(x_0'))^{\bar \rho},
\end{align*}
for some positive constants $C,\bar \rho$ depending only on $\beta$, $K$, $h_1$ and $h_2$. 

This gives the desired point-wise estimate at the point $x_0$, with~$x_0'=0$ and~$x_{0,N}\in(0,h_1)$
(recall~\eqref{heart}),
and with constants~$C$ 
and $\bar \rho$ depending only on $\beta$, $K$, $h_1$ and $h_2$. Up to a translation,
the same estimate holds at any point of~$\Omega$, with vertical coordinate~$x_N
\in \big(\varphi(x'),\,\varphi(x')+h_1\big)$.
That is,
translating $\Sigma_\beta$ (and hence $D$) along the
boundary $\pa \Omega$, the family $\{v(x-x_0): x_0 \in \pa \Omega\}$ 
gives a wall of lower barriers for $u$, providing the 
desired lower estimate at any point of $\{\varphi(x') < x_N < \varphi(x')+h_1\}$,
which completes the proof of
Theorem~\ref{thm: main 1}-($iii$).\end{proof}

\subsection{Uniqueness and monotonicity of the positive solution}\label{sec: uniqueness} 

This section is devoted to the proof of point ($v$) in Theorem \ref{thm: main 1}. Our approach is inspired by \cite[Section 5]{BCNCPAM}, but we modify it in such a way that the proof gives essentially in one shot both uniqueness and monotonicity of the solution. This simplifies the argument in \cite{BCNCPAM}, and completes the proof of Theorem \ref{thm: main 1}. 

We start with the preliminary observation that, under our assumptions, solutions are bounded away from $1$ if the distance from the boundary is finite (recall~\eqref{mu=1}).

\begin{lemma}\label{lem: 3.4 BCN}
For $h >0$, let 
$$\Omega_h:= \{ \varphi(x') < x_N < \varphi(x')+h\}.$$
Then, any bounded solution to \eqref{problem} is bounded away from $1$ in $\Omega_h$, namely
$$ \sup_{\overline{\Omega_h}} u < 1.$$
\end{lemma}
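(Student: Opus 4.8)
The plan is to argue by contradiction, via a translation/blow-up procedure. First, recall that by Theorem~\ref{thm: main 1}-($i$) we already know that $u<1$ in $\Omega$, and hence, comparing with~\eqref{M:M}, that $M\le1$. Suppose, for the sake of contradiction, that $\sup_{\overline{\Omega_h}}u=1$; since $u<1$ everywhere in $\Omega$ and $u=0$ on $\pa\Omega\subset\R^N\setminus\Omega$, this supremum is not attained, so there is a sequence $x_n=(x_n',x_{n,N})\in\overline{\Omega_h}$ with $u(x_n)\to1$. Set $t_n:=x_{n,N}-\varphi(x_n')\in[0,h]$, and note that $\dist(x_n,\pa\Omega)\le t_n$, since the foot $(x_n',\varphi(x_n'))$ belongs to $\pa\Omega$. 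As $u$ is globally $\alpha$-H\"older continuous by Theorem~\ref{thm: main 1}-($iv$) and vanishes on $\pa\Omega$, we obtain
\[
u(x_n)=\big|u(x_n)-u\big(x_n',\varphi(x_n')\big)\big|\le C\,t_n^\alpha,
\]
so $t_n$ is bounded away from $0$; passing to a subsequence, $t_n\to t_\infty\in(0,h]$.

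I would then slide the origin to the foot of $x_n$: define $u_n(y):=u\big(y+(x_n',\varphi(x_n'))\big)$ and $\varphi_n(y'):=\varphi(y'+x_n')-\varphi(x_n')$, so that, by translation invariance of $(-\Delta)^s$, each $u_n$ is a bounded solution of~\eqref{problem} in the globally $K$-Lipschitz epigraph $\Omega_n:=\{y_N>\varphi_n(y')\}$, with $\varphi_n(0')=0$ and $u_n(0',t_n)=u(x_n)\to1$. The functions $\varphi_n$ are $K$-Lipschitz and vanish at $0'$, so, up to a subsequence, $\varphi_n\to\varphi_\infty$ locally uniformly, with $\varphi_\infty$ $K$-Lipschitz and $\varphi_\infty(0')=0$; set $\Omega_\infty:=\{y_N>\varphi_\infty(y')\}$. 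Moreover $0\le u_n\le M$, and since every $\Omega_n$ satisfies the uniform exterior cone condition with the same opening $\theta=\arctan(1/K)\in(0,\pi/2)$ and $\|f(u_n)\|_{L^\infty(\Omega_n)}\le\sup_{[0,M]}|f|$ is bounded independently of $n$, Theorem~\ref{thm: boundary regularity} provides a uniform bound for $\|u_n\|_{\C^{0,\alpha}(\R^N)}$; hence, up to a further subsequence, $u_n\to u_\infty$ locally uniformly in $\R^N$, with $u_\infty\in\C^{0,\alpha}(\R^N)$ and $0\le u_\infty\le M$.

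The main point is then to identify $u_\infty$ as a bounded solution of~\eqref{problem} in $\Omega_\infty$. That $u_\infty\equiv0$ on $\R^N\setminus\Omega_\infty$ is immediate: a point below the graph of $\varphi_\infty$ lies below the graph of $\varphi_n$ for $n$ large, so $u_n=0$ there, and one concludes by continuity. To recover the equation I would use interior regularity for the fractional Laplacian: on every ball $B\Subset\Omega_\infty$ the identities $(-\Delta)^su_n=f(u_n)$ eventually hold with right-hand side bounded uniformly in $n$, so $\{u_n\}$ is precompact in $\C^{2s+\eps}_{\loc}(\Omega_\infty)$ for some $\eps>0$; combining this with the uniform $L^\infty(\R^N)$ bound one passes to the limit in the singular integral defining $(-\Delta)^su_n$ (dominated convergence) and, $f$ being continuous, concludes that $(-\Delta)^su_\infty=f(u_\infty)$ in $\Omega_\infty$. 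Finally, $u_\infty(0',t_\infty)=\lim_n u_n(0',t_n)=1\ne0$, so $u_\infty\not\equiv0$, and the strong maximum principle (Proposition~\ref{prop: strong}) forces $u_\infty>0$ in $\Omega_\infty$; thus $u_\infty$ is a bounded solution of~\eqref{problem} in the globally Lipschitz epigraph $\Omega_\infty$.

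To close the argument, note that $\varphi_\infty(0')=0<t_\infty$, so $(0',t_\infty)\in\Omega_\infty$, whereas Theorem~\ref{thm: main 1}-($i$) applied to $u_\infty$ gives $u_\infty<1$ in $\Omega_\infty$, contradicting $u_\infty(0',t_\infty)=1$. This proves that $\sup_{\overline{\Omega_h}}u<1$. I expect the only delicate step to be the passage to the limit in the penultimate paragraph — showing that $u_\infty$ solves the equation in $\Omega_\infty$ (stability of the equation, together with the uniform interior and H\"older estimates) and that it is strictly positive there — but this is by now standard once Theorems~\ref{thm: main 1}-($i$) and~\ref{thm: boundary regularity} are available.
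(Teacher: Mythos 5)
Your proof is correct and follows the same translation/compactness strategy as the paper's: translate so the concentration points move to the origin, use the uniform exterior cone condition and Theorem~\ref{thm: boundary regularity} to extract a locally uniformly convergent subsequence in a limit Lipschitz epigraph, identify the limit as a solution of the limit equation, and derive a contradiction via the strong maximum principle. The paper's endgame is a touch more economical: it centres the translation at $x_n$ itself (so $u_n(0)\to 1$), and then applies the strong maximum principle directly to $1-\bar u\ge 0$ --- which satisfies $(-\Delta)^s(1-\bar u)-c(x)(1-\bar u)=0$ in the limit epigraph, since $f(1)=0$ --- to conclude $\bar u<1$ everywhere in $\R^N$, contradicting $\bar u(0)=1$. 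This bypasses your preliminary step showing that $t_n$ stays bounded away from $0$, and the step establishing $u_\infty>0$ via the strong maximum principle, both of which you need only so that Theorem~\ref{thm: main 1}-($i$) can be invoked for $u_\infty$. Your route for identifying the limit equation (interior Schauder estimates plus dominated convergence in the singular integral) is a fine alternative to the paper's reference to the stability of viscosity solutions.
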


\begin{proof}
Let $u$ be a solution
of~\eqref{problem}, and let us suppose by contradiction that $u(x_n) \to 1$ 
as $n\to+\infty$
along a sequence $\{x_n\} \subset \Omega_h$. 

Let us consider then the sequence of translated functions $u_n(x):= u(x+x_n)$.
We observe that
\begin{equation}\label{87654erfgIGHaaJa}
\lim_{n\to+\infty} u_n(0)=1\end{equation}
and
\begin{equation}\label{86543rtGHJFDFGh}
\begin{cases}
(-\Delta)^s u_n = f(u_n) & \text{in $\Omega_n$}, \\
u_n = 0 & \text{in $\R^N \setminus \Omega_n$}, \\
0 \le u_n \le 1 & \text{in $\R^N$},
\end{cases} 
\end{equation}
where
$$ \Omega_n:= \{x_N > \varphi(x'+x'_n) - x_{n,N}\}.$$
Since $\varphi$ is globally Lipschitz continuous, the translated epigraphs $\Omega_n$ converge, up to subsequence, to a limit epigraph $\bar \Omega$, which is
still globally Lipschitz continuous. Notice also that $\Omega_n$ satisfies a uniform cone condition, with opening of the cone independent of $n$. Thus, by Theorem \ref{thm: boundary regularity}, the sequence $\{u_n\}$ is uniformly bounded in $\mathcal{C}^{0,\alpha}(\R^N)$, and hence converges locally uniformly in $\R^N$ to a limit function~$\bar u \in \mathcal{C}^{0,\alpha}(\R^N)$. 

Thus, by~\eqref{86543rtGHJFDFGh}
and the stability property of viscosity solutions \cite[Lemma 4.5]{CafSilCPAM}, we infer that $\bar u$ is a viscosity solution to
\begin{equation}\label{789JA654588888}
\begin{cases}
(-\Delta)^s \bar u = f(\bar u) & \text{in $\bar \Omega$}, \\
\bar u = 0 & \text{in $\R^N \setminus \bar \Omega$}, \\
0 \le \bar u \le 1 & \text{in $\R^N$}.
\end{cases}
\end{equation}
Actually, arguing as in \cite[Remark 2.3]{QuaXia}, we see that~$\bar u$ is a classical solution.
Since also the function constantly equal to~$1$ is a solution
of the equation in~\eqref{789JA654588888}
(recall that~$f(1)=0$), 
the strong maximum principle implies that $\bar u<1$ in $\R^N$. 

This is in contradiction with the fact that~$\bar u(0)=1$,
which follows from~\eqref{87654erfgIGHaaJa}.
\end{proof}

In order to prove Theorem \ref{thm: main 1}-($v$), let us consider two bounded solutions $u$ and $v$ of \eqref{problem}. We show that necessarily $u \ge v$. Exchanging the role of $u$ and $v$ we deduce also that $v \ge u$, whence uniqueness follows. 

For this, first of all we observe that,
by Theorem \ref{thm: main 1}-($ii$), there exists $A>0$ such that 
\begin{equation}\label{9ioj56789oiUYTRFGHAAAAAA}
{\mbox{$u(x),v(x) > {t_{1}}$ if $\dist(x,\pa \Omega) > A$,}}\end{equation}
where ${t_{1}}$ was introduced in assumption ($f3$).
Let 
\[
\Omega_A := \left\{ x \in \Omega: \dist(x,\pa \Omega) < A\right\} \quad{\mbox{  and }}\quad \Omega^A:= \Omega \setminus \overline{\Omega_A}.
\]
Also, for $\tau \ge 0$ let us consider
\[
u_\tau(x):= u(x+\tau e_N).
\]
As in \cite[Lemma 5.1]{BCNCPAM}, we show that:
\begin{lemma}\label{lem: 5.1 BCN}
If $u_\tau > v$ in $\Omega_A$, then $u_\tau > v$ in~$\R^N$.
\end{lemma}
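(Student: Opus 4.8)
The statement says: if $u_\tau > v$ in $\Omega_A$, then $u_\tau > v$ in all of $\R^N$. The natural approach is to apply the maximum principle of Theorem~\ref{thm: super max} to the function $z := v - u_\tau$ on the domain $D := \Omega^A = \{x \in \Omega : \dist(x,\pa\Omega) \ge A\}$ (up to taking the open set), after showing that $z$ satisfies a suitable differential inequality with the right sign of the zeroth-order coefficient. First I would verify the exterior data: outside $\Omega^A$ we need $z \le 0$. On $\Omega_A$ this is exactly the hypothesis $u_\tau > v$; on $\R^N \setminus \Omega$ we have $v = 0$ and $u_\tau \ge 0$, so $z \le 0$ there as well; so indeed $z \le 0$ in $\R^N \setminus \Omega^A$. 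Next I would check that $\overline{\Omega^A}$ is disjoint from the closure of an infinite open connected cone, which is needed to invoke Theorem~\ref{thm: super max}: this follows because $\Omega$ is a globally Lipschitz epigraph, hence its complement contains a fixed cone (pointing downward), and moving into $\Omega^A$ only shrinks the set, so the complement still contains that cone. Also $z$ is bounded above since $u$ and $v$ are bounded.

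The key computation is the differential inequality. In $\Omega^A \subset \Omega$ we have $(-\Delta)^s v = f(v)$, and since $\Omega$ is translation-stable under $+\tau e_N$ downwards — wait, actually $\Omega^A + \tau e_N \subset \Omega$ for $\tau \ge 0$ because $\Omega$ is an epigraph, so $(-\Delta)^s u_\tau = f(u_\tau)$ holds at every point of $\Omega^A$. Hence
\[
(-\Delta)^s z = f(v) - f(u_\tau) = c(x)\, z \qquad \text{in } \Omega^A,
\]
where, by the incremental-quotient definition,
\[
c(x) := \begin{cases} \dfrac{f(v(x)) - f(u_\tau(x))}{v(x) - u_\tau(x)} & \text{if } v(x) \ne u_\tau(x),\\[2mm] 0 & \text{otherwise},\end{cases}
\]
and $c \in L^\infty$ with $cz$ continuous, since $f$ is globally Lipschitz by~\eqref{rem: f Lip}. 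The crucial point is the \emph{sign} of $c$: by~\eqref{9ioj56789oiUYTRFGHAAAAAA}, in $\Omega^A$ both $v$ and $u_\tau$ exceed $t_1$ (for $u_\tau$ this uses that $\dist(x+\tau e_N, \pa\Omega) \ge \dist(x,\pa\Omega) \ge A$ when $\tau \ge 0$, since translating upward in an epigraph only increases the distance to the boundary), and by assumption~($f3$) $f$ is non-increasing on $(t_1,\mu) = (t_1,1)$; therefore the difference quotient of $f$ between two points of $(t_1,1)$ is $\le 0$, i.e. $c \le 0$ a.e. in $\Omega^A$. This is exactly the sign required by Theorem~\ref{thm: super max}.

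Having assembled all the hypotheses — $\overline{\Omega^A}$ disjoint from a cone, $z$ continuous and bounded above, $z \le 0$ outside $\Omega^A$, $(-\Delta)^s z - c(x) z \le 0$ (in fact $= 0$) in $\Omega^A$ with $c \le 0$ — Theorem~\ref{thm: super max} yields $z \le 0$ in $\Omega^A$, i.e. $u_\tau \ge v$ there; combined with the hypothesis on $\Omega_A$ and with $v = 0 < u_\tau$ would be needed for strictness. For the \emph{strict} inequality claimed in the lemma, I would then run the strong maximum principle (Proposition~\ref{prop: strong}) on $-z = u_\tau - v \ge 0$: it satisfies $(-\Delta)^s(u_\tau - v) + (-c(x))(u_\tau - v) \ge 0$ in $\Omega$ with $-c \ge 0$ bounded (one must here extend the argument to all of $\Omega$, using that in $\Omega_A$ we already know $u_\tau > v$ by hypothesis, so the nonnegativity of $u_\tau - v$ holds throughout $\Omega$), whence either $u_\tau - v \equiv 0$ or $u_\tau - v > 0$; the former is excluded because on $\R^N \setminus \Omega$ we have $u_\tau - v = u_\tau \ge 0$ but $u_\tau$ is not identically zero (it is positive on the nonempty set $\Omega - \tau e_N \cap \ldots$), ruling out $u_\tau \equiv v$. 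Hence $u_\tau > v$ in $\R^N$ wherever this makes sense, i.e. $u_\tau \ge v$ everywhere with strict inequality in $\Omega$.

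\textbf{Main obstacle.} The delicate point is not the maximum-principle machinery, which is tailor-made for this, but rather making sure the zeroth-order coefficient $c(x)$ has the right sign on \emph{all} of $\Omega^A$: this hinges on the combination of the quantitative statement $v, u_\tau > t_1$ far from $\pa\Omega$ (from Theorem~\ref{thm: main 1}-($ii$), and requiring the monotonicity of $\dist(\cdot,\pa\Omega)$ under upward translation in an epigraph) with the monotonicity hypothesis~($f3$) on $f$ near $\mu = 1$. A secondary subtlety is the regularity/continuity bookkeeping needed to legitimately apply Theorem~\ref{thm: super max} in the viscosity framework (checking $cz \in \mathcal{C}(\Omega^A)$ and that $z$ is a genuine viscosity — indeed classical — subsolution of the linearized equation), but this is routine given that $u$, $v$ are classical solutions and $f$ is Lipschitz.
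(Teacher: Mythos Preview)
Your proof is correct and follows essentially the same route as the paper: apply Theorem~\ref{thm: super max} to $z=v-u_\tau$ on $\Omega^A$ after checking that the linearized coefficient $c$ is nonpositive there (via ($f3$) and the fact that both $u_\tau,v>t_1$ in $\Omega^A$, the latter using the epigraph property $x\in\Omega^A\Rightarrow x+\tau e_N\in\Omega^A$), and then upgrade to strict inequality via the strong maximum principle. One small correction: when you invoke Proposition~\ref{prop: strong} on all of $\Omega$ you assert ``$-c\ge 0$'', which need not hold in $\Omega_A$ where the values may drop below $t_1$ --- but this is harmless, since that proposition imposes no sign condition on its zeroth-order coefficient, and your final remark that the conclusion should really read $u_\tau\ge v$ in $\R^N$ with strict inequality in $\Omega$ is exactly right.
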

\begin{proof}
First of all, since $\Omega$ is an epigraph we have
\begin{equation}\label{6g8uj9ikmuhjn}
{\mbox{if~$x\in\Omega^A$, then~$x+\tau e_N\in \Omega^A$.}}
\end{equation}

Now we notice that~$u_\tau \ge0= v$ in $\R^N \setminus \Omega$.
Thus, to establish the 
desired result,
we have only to prove that \begin{equation}\label{87uygf8ytfdijuhvijuhv}
{\mbox{$u_\tau \ge v$ in $\Omega^A$.}}\end{equation} To this aim, we 
use~\eqref{6g8uj9ikmuhjn} and we
observe that 
\[
\begin{cases}
(-\Delta)^s (v-u_\tau) - c_\tau(x) (v-u_\tau) =0 \quad \text{in $\Omega^A$} ,\\
v - u_\tau \le 0 \quad \text{in $\R^N \setminus \Omega^A$},
\end{cases}
\]
where 
\[
 c_\tau(x):= \begin{cases} \displaystyle\frac{f(v(x))-f(u_\tau(x))}{
v(x)-u_\tau(x)} & \text{if $v(x) \neq u_\tau(x)$}, \\
 0 & \text{if $v(x) = u_\tau(x)$}. \end{cases}
\]
Now we claim that
\begin{equation}\label{6g8uj9ikmuhjn:3}
{\mbox{$u_\tau,v \ge t_1$ in $\Omega^A$.}}\end{equation}
Indeed, let~$x\in \Omega^A$. Then~$x+\tau e_N\in\Omega^A$, thanks to~\eqref{6g8uj9ikmuhjn}.
Hence~\eqref{6g8uj9ikmuhjn:3}
follows from~\eqref{9ioj56789oiUYTRFGHAAAAAA}.

Now, since $f$ is non-increasing in $(t_1,1)$,
we deduce from~\eqref{6g8uj9ikmuhjn:3}
that $c_\tau \le 0$ in $\Omega^A$, and by Lipschitz continuity we have also $c_\tau \in L^\infty(\Omega^A)$.

Moreover, it is clear that $\Omega^A$ satisfies an exterior cone condition (since $\Omega$ does), and hence 
Theorem \ref{thm: super max} and the strong maximum principle imply~\eqref{87uygf8ytfdijuhvijuhv},
as desired.\end{proof}

Now, we aim at showing that $u_\tau \ge v$ in $\R^N$ for $\tau =0$. 
Thanks to Lemma \ref{lem: 5.1 BCN}, this statement is equivalent to showing that~$u_\tau \ge v$ in $\Omega_A$ for $\tau = 0$.

By Lemma \ref{lem: 3.4 BCN}, we know that $v \le C$ with $C<1$ in $\Omega_A$.
Moreover, since $u \to 1$ uniformly as $\dist(x,\pa \Omega) \to +\infty$ (recall 
Theorem \ref{thm: main 1}-($ii$)), we have 
that~$u_\tau \ge v$ in $\Omega_A$ for $\tau$ sufficiently large. Therefore, 
we can define
\begin{equation}\label{def T}
T:= \inf\left\{ \tau >0: u_t > v \text{ in $\Omega_A$ for every $t > \tau$ }\right\}\in[0,+\infty).
\end{equation}

\begin{remark}\label{rem: uniq and monot}
For the uniqueness, one could replace the previous definition of $T$  with
\[
\inf\{\tau >0: u_\tau \ge v \text{ in $\Omega_A$}\},
\]
as done in \cite{BCNCPAM}. Nevertheless, as we will show later, definition \eqref{def T} permits to perform
the same argument used for the uniqueness also for the monotonicity of $u$.
\end{remark}

We are now in the position of completing the proof of Theorem \ref{thm: main 1}-($v$).

\begin{proof}[Completion of the proof of Theorem \ref{thm: main 1}-($v$)]
By continuity $u_T \ge v$ in $\Omega_A$. Hence, by
Lemma \ref{lem: 5.1 BCN},
\begin{equation}\label{hg1qaxUYFDX}
{\mbox{$u_T \ge v$ in $\R^N$.}}
\end{equation}
Thus, if $T=0$ the proof is complete. To rule out the possibility that~$T>0$, we argue by contradiction.

If $T>0$, then there exist sequences $0<\tau_j<T$ and $x_j \in \Omega_A$ such that
\begin{equation}\label{absurd T:0}
\lim_{j\to+\infty}\tau_j=T>0
\end{equation}
and
\begin{equation}\label{absurd T}
u(x_j+ \tau_j e_N) \le v(x_j). 
\end{equation}
Let us consider 
\[
u_j(x):= u(x+x_j) \quad \text{and} \quad v_j(x) := v(x+x_j).
\]
As in Lemma \ref{lem: 3.4 BCN}, we have
that, up to subsequences, $u_j \to \bar u$ and $v_j \to \bar v$ locally uniformly, and $\bar u$ and $\bar v$ are 
solutions to \eqref{problem} in a limit epigraph $\bar \Omega$. 

We remark that, since~$x_j\in\Omega$, the point~$0$ belongs to the approximating domains
and therefore
\begin{equation}\label{789-A7890AA}
{\mbox{$0$ belongs to the closure of~$\bar\Omega$.}}
\end{equation}
We also notice that
\begin{equation}\label{hg1qaxUYFDX:2}
{\mbox{$\bar u_T (x):=\bar u(x+Te_N)\ge \bar v(x)$ for any $x\in\R^N$,}}
\end{equation}
thanks to~\eqref{hg1qaxUYFDX}. Furthermore,
in light of \eqref{absurd T:0},
\eqref{absurd T} and using the uniform convergence,
\begin{equation}\label{claim 5.7}
\begin{split}&\bar u_T(0) =
\bar u(T e_N) =\lim_{j\to+\infty} u_j (\tau_j e_N)
=\lim_{j\to+\infty} u(x_j+\tau_j e_N)
\\ &\qquad\le \lim_{j\to+\infty} v(x_j)
=\lim_{j\to+\infty} v_j(0)
= \bar v(0).\end{split}
\end{equation}
By~\eqref{hg1qaxUYFDX:2}
and~\eqref{claim 5.7}, we conclude that
\begin{equation}\label{8yugfcijughvoijuhbgiuhgfghjytyuiuhgfGHJ}
\bar u_T(0)=\bar v(0).
\end{equation}
Now we claim that
\begin{equation} \label{9iuhgviuhgiuHIJHBAAA:XX}
{\mbox{$\bar u_T >0 = \bar v$ on $\pa \bar \Omega$.}}\end{equation}
To check this, we take~$\bar p\in\pa \bar \Omega$.
Then there exists~$x_0\in\pa\Omega$ such that~$p_j:=x_0-x_j\to\bar p$
as $j\to+\infty$. As a consequence, using the uniform convergence, we see that
\begin{equation}\label{9iuhgviuhgiuHIJHBAAA:1}
\bar v(p) = \lim_{j\to+\infty} v_j(p)
= \lim_{j\to+\infty} v_j(p_j) = \lim_{j\to+\infty} v(p_j+x_j) = v(x_0)=0
\end{equation}
and
\begin{equation}\label{9iuhgviuhgiuHIJHBAAA:2}\begin{split}
&\bar u_T(p) = \bar u(p+Te_N)=
\lim_{j\to+\infty} u_j(p+Te_N)= \lim_{j\to+\infty} u_j(p_j+Te_N)\\&\qquad = \lim_{j\to+\infty} 
u(p_j+x_j+Te_N) =u(x_0+Te_N)>0,\end{split}
\end{equation}
since~$x_0+Te_N\in\Omega$ (here we are using that~$T>0$).
Combining~\eqref{9iuhgviuhgiuHIJHBAAA:1}
and~\eqref{9iuhgviuhgiuHIJHBAAA:2}, we obtain~\eqref{9iuhgviuhgiuHIJHBAAA:XX}.

Also,
\[
(-\Delta)^s ( \bar u_T-\bar v) -c_T(x) (\bar u_T-\bar v) = 0 \qquad \text{in $\bar \Omega$},
\]
where
\[
c_T(x) := \begin{cases}  \displaystyle\frac{f(\bar u_T(x))-f(\bar v(x))}{\bar u_T(x)-\bar v(x)}, & \text{if $\bar u_T(x) \neq \bar v(x)$} \\ 0 & \text{if $\bar u_T(x) = \bar v(x)$},
\end{cases}
\]
and $c_T \in L^\infty(\bar \Omega)$. Thus, the strong maximum principle and~\eqref{hg1qaxUYFDX:2}
imply that either $\bar u_T > \bar v$ in $\bar \Omega$, or $\bar u_T \equiv \bar v$ in $\R^N$. 

But the latter alternative is not admissible, due to~\eqref{9iuhgviuhgiuHIJHBAAA:XX}. 
Therefore, we conclude that~$\bar u_T > \bar v$ in $\bar \Omega$ and so,
again by \eqref{9iuhgviuhgiuHIJHBAAA:XX}, in the closure of~$\bar \Omega$. This is in contradiction with~\eqref{789-A7890AA}
and~\eqref{8yugfcijughvoijuhbgiuhgfghjytyuiuhgfGHJ}, hence the proof
of Theorem \ref{thm: main 1}-($v$) is complete.
\end{proof}

\begin{proof}[Proof of Theorem \ref{thm: main 1}-($vi$)]
We proceed exactly as for the uniqueness, but instead of comparing $u_{\tau}$ with $v$ we compare $u_{\tau}$ with $u$ (indeed, $v$ was just the generic solution,
so the case~$v:=u$ is admissible). In the end, we obtain that~$u_{\tau}>u$ in $\Omega_A$ for every $\tau>0$, which by Lemma \ref{lem: 5.1 BCN} yields the desired monotonicity.
\end{proof}

\section{Monotonicity of solutions in coercive epigraphs}\label{8uhg6tr4rtgbifdfghjAHJ}

This section is devoted to the proof of Theorem \ref{thm: main 2}, which rests upon the moving planes method. We introduce some notation: for $\lambda \in \R$, we set
\[
\begin{split}
T_\lambda & := \{x \in \R^N: x_N=\lambda\}; \\
H_{\lambda} & := \{x \in \R^N: x_N <\lambda\}; \\
x^\lambda & := (x',2\lambda-x_N) \quad \text{the reflection of $x$ with respect to $T_\lambda$}; \\
A^\lambda&:= \text{the reflection of a given set $A$ with respect to $T_\lambda$}; \\
\Sigma_\lambda &:= H_\lambda \cap \Omega;\\
\lambda_0 &:= \inf \left\{x_N: \text{there exists $x' \in \R^N$ with $(x',x_N) \in \Omega$}\right\}.
\end{split}
\]
The crucial remark is that, since we deal with a coercive epigraph, the set $\Sigma_\lambda$ is bounded for every $\lambda \in \R$, even if $\Omega$ is unbounded. Therefore, one can adapt the proof of \cite[Theorem 1.1]{FeWa}, which uses the moving planes method for fractional elliptic equations in bounded domains. For the reader's convenience, we recall the following weak maximum principle in sets of small measure, which we conveniently re-phrase for our purpose.

\begin{proposition}[Proposition 2.2, \cite{FeWa}]\label{lem: small}
Let $D$ be an open and bounded subset of $\R^N$. Let $c \in L^\infty(D)$ with $\|c\|_{L^\infty(D)} < M$, and let $z$ be a solution to 
\begin{equation}\label{comparison small}
\begin{cases}
(-\Delta)^s z \ge c(x) z & \text{in $D$}, \\
z \ge 0 & \text{in $\R^N \setminus D$}.
\end{cases}
\end{equation}
Then, there exists $\delta>0$ depending only on $N$, $s$ and $M$ such that if $|D|<\delta$
then~$z \ge 0$ in $D$.
\end{proposition}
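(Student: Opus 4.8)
I would argue by an energy estimate on the negative part $z^-:=\max\{-z,0\}$, combining the fractional Sobolev inequality with the smallness of $|D|$. Since $z$ is continuous and $z\ge0$ in $\R^N\setminus D$ (and $\partial D\subseteq\R^N\setminus D$), the set $\{z<0\}$ is contained in $D$, so $z^-$ vanishes outside $D$; working in the natural energy framework (as in \cite{FeWa}), $z^-$ is a bounded function supported in the bounded set $\overline D$, and it belongs to $H^s(\R^N)$. This is exactly the regularity needed to use $-z^-$ as a test function and to write the bilinear (Gagliardo) representation of $(-\Delta)^s$.

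\textbf{Step 1 (energy inequality).} Multiplying $(-\Delta)^s z\ge c(x)z$ by the nonpositive test function $-z^-$ and integrating over $\R^N$ (legitimate since $z^-$ is supported in $D$), and using $z\,z^-=-(z^-)^2$ pointwise together with $\|c\|_{L^\infty(D)}<M$, one obtains
\[
\frac{c_{N,s}}{2}\int_{\R^N}\int_{\R^N}\frac{(z(x)-z(y))\,(z^-(y)-z^-(x))}{|x-y|^{N+2s}}\,dx\,dy\;\le\; M\,\|z^-\|_{L^2(\R^N)}^2 .
\]
The elementary pointwise inequality $(a-b)(b^--a^-)\ge(a^--b^-)^2$ (proved by writing $a=a^+-a^-$, reducing to $(a^+-b^+)(a^--b^-)\le0$ and checking the sign cases, using $a^+a^-=0$) then gives, with $a=z(x)$, $b=z(y)$,
\[
[z^-]_{H^s(\R^N)}^2:=\int_{\R^N}\int_{\R^N}\frac{(z^-(x)-z^-(y))^2}{|x-y|^{N+2s}}\,dx\,dy\;\le\;\frac{2M}{c_{N,s}}\,\|z^-\|_{L^2(\R^N)}^2 .
\]

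\textbf{Step 2 (Sobolev and H\"older on a small set).} By the fractional Sobolev inequality, $\|z^-\|_{L^{2^*_s}(\R^N)}^2\le C_{N,s}\,[z^-]_{H^s(\R^N)}^2$ with $2^*_s=\tfrac{2N}{N-2s}$ when $N>2s$ (and the analogous embedding into $L^q$ for $q$ large when $N\le2s$, i.e. only in the borderline case $N=1$). Since $z^-$ is supported in $D$, H\"older's inequality yields $\|z^-\|_{L^2(\R^N)}^2\le\|z^-\|_{L^{2^*_s}(\R^N)}^2\,|D|^{2s/N}$. Plugging this into the conclusion of Step 1,
\[
[z^-]_{H^s(\R^N)}^2\;\le\;\frac{2M\,C_{N,s}}{c_{N,s}}\,|D|^{2s/N}\,[z^-]_{H^s(\R^N)}^2 ,
\]
so if $\delta>0$ is chosen with $\tfrac{2M\,C_{N,s}}{c_{N,s}}\,\delta^{2s/N}<1$ — a choice depending only on $N$, $s$ and $M$ — then $|D|<\delta$ forces $[z^-]_{H^s(\R^N)}=0$. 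Hence $z^-$ is constant, and being bounded with compact support it vanishes identically, i.e. $z\ge0$ in $\R^N$, in particular in $D$.

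\textbf{Main obstacle.} Once the set-up is fixed, the argument is a clean Sobolev-plus-absorption computation; the only genuinely delicate point is the preliminary one, namely ensuring $z^-\in H^s(\R^N)$ so that the integration-by-parts identity $\int_{\R^N}(-\Delta)^s z\cdot\varphi=\tfrac{c_{N,s}}{2}\iint\tfrac{(z(x)-z(y))(\varphi(x)-\varphi(y))}{|x-y|^{N+2s}}$ is valid for $\varphi=-z^-$. This is automatic in the weak/energy formulation used in \cite{FeWa}; for classical solutions it follows from interior regularity and the boundedness and compact support of $z^-$.
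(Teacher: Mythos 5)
The paper does not prove this proposition itself: it is quoted verbatim from Felmer--Wang~\cite{FeWa} and used as a black box in Section~\ref{8uhg6tr4rtgbifdfghjAHJ}. Your energy argument --- test the inequality against $-z^-$, use the pointwise inequality $(a-b)(b^--a^-)\geq (a^--b^-)^2$ to pass to the Gagliardo seminorm of $z^-$, then absorb via the fractional Sobolev inequality combined with H\"older and the smallness of $|D|$ --- is precisely the standard proof of the small-measure maximum principle for the fractional Laplacian and is, up to cosmetic details, what is done in~\cite{FeWa}; the computation itself is correct. The one point worth being a bit more careful about (and you do flag it) is the preliminary justification that $z^-\in H^s(\R^N)$ and that the bilinear identity for $(-\Delta)^s$ tested against $-z^-$ is legitimate: ``interior regularity plus boundedness and compact support'' is not quite enough on its own, since interior $\C^{2s+\eps}$ regularity gives no control up to $\pa D$ and a bounded continuous compactly supported function need not lie in $H^s(\R^N)$. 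In~\cite{FeWa} this is a non-issue because solutions are taken in the weak $H^s$ framework from the start, so $z^-\in H^s$ with compact support follows by truncation; in the present paper it is covered by the stated equivalence of classical, viscosity and weak solutions (via~\cite{CafSilCPAM, QuaXia}), and in the actual application of the proposition the function fed into it ($w_{1,\lambda}$) is explicitly bounded with compact support and inherits the needed Sobolev regularity from the solution. So: correct proof, same route as the cited source, with the only delicate point being the functional-analytic set-up you already identified.
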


\begin{remark}\label{rem: uniform delta}
Consider a sequence of boundary value problems of type \eqref{comparison small}, with $c= c_n$ and $D=D_n$, $n \in \N$. If we have a uniform bound $\|c_n\|_{L^\infty(D_n)} < M$, then 
Proposition~\ref{lem: small}
gives a threshold $\delta$ independent of $n$. 
\end{remark}

\begin{proof}[Proof of Theorem \ref{thm: main 2}]
We set~$u_\lambda(x):=u(x',2\lambda-x_N)$ and~$w_\lambda(x):=u_\lambda(x)-u(x)$.

We aim at proving that $w_\lambda > 0$ in $H_\lambda$ for every $\lambda>\lambda_0$, which gives the desired monotonicity. 

For any~$\lambda>\lambda_0$, we have that~$2\lambda-x_N > x_N$ in $\Sigma_\lambda$.
Accordingly, the monotonicity of $f$ in $x_N$ gives
\begin{equation}\label{pb refl}
\begin{split}
(-\Delta)^s w_\lambda (x) & = (-\Delta)^s (u_\lambda(x)-u(x)) \\
& = f(x', 2\lambda-x_N, u_\lambda(x)) - f(x,u(x)) \\
& \ge f(x, u_\lambda(x)) - f(x,u(x)) \\
& = c_\lambda(x) w_\lambda(x) 
\end{split}
\end{equation}
in $\Sigma_\lambda$, with 
\[
c_\lambda(x):= \begin{cases}
\displaystyle\frac{f(x,u_\lambda(x))-f(x,u(x))}{u_\lambda(x)-u(x)} & \text{if $u_\lambda(x) \neq u(x)$}, \\
0 & \text{if $u_\lambda(x) = u(x)$}.
\end{cases}
\]
Notice that,
thanks to the Lipschitz continuity of $f$ and the fact that $u \in L^\infty_\loc(\R^N)$, for any $\bar \lambda>\lambda_0$,
there exists $C>0$ such that $\|c_\lambda\|_{L^\infty(\Sigma_\lambda)} \le C$ for any $\lambda \in (\lambda_0,\bar \lambda]$.

For convenience, we now divide the proof into separate steps:\medskip

\emph{Step 1) We show that 
$w_\lambda > 0$ in $\Sigma_\lambda$ for any~$\lambda>\lambda_0$, with~$\lambda-\lambda_0$ 
small enough.} 

For this, let $\Sigma_\lambda^-:= \{x \in \Sigma_\lambda: w_\lambda(x) <0\}$. 
We first show that
\begin{equation}\label{8ikjYHIJAA:A}
{\mbox{$w_\lambda \ge 0$ in $\Sigma_\lambda$ for any~$\lambda>\lambda_0$, with~$\lambda-\lambda_0$ 
small enough,}}
\end{equation}
i.e., that~$\Sigma_\lambda^- =\varnothing$.
To this aim, we argue by contradiction.
If $\Sigma_\lambda^- \neq 
\varnothing$, we can define
\begin{equation}\label{def w_1 e w_2}
w_{1,\lambda}:= \begin{cases} w_\lambda & \text{in $\Sigma_\lambda^-$}  \\ 0, 
& \text{in $\R^N \setminus \Sigma_\lambda^-$},    \end{cases}  \qquad{\mbox{ and }}\qquad 
w_{2,\lambda}:= \begin{cases} w_\lambda & \text{in $\R^N \setminus \Sigma_\lambda^-$},\\ 0 & \text{in $\Sigma_\lambda^-$}.    \end{cases}
\end{equation}
We observe that~$w_\lambda = w_{1,\lambda} + w_{2,\lambda}$, and that $w_{1,\lambda} \le 0$ while $w_{2,\lambda} \ge 0$. Exactly as in \cite[Theorem 1.1, step 1]{FeWa}, it is possible to show that $(-\Delta)^s w_{2,\lambda} \le 0$ in $\Sigma_\lambda^-$. Hence, by \eqref{pb refl},
\[ 
\begin{cases}
(-\Delta)^s w_{1,\lambda} \ge c_\lambda(x) w_{1,\lambda} & \text{in $\Sigma_\lambda^-$}, \\
w_{1,\lambda} = 0 & \text{in $\R^N \setminus \Sigma_\lambda^-$}.
\end{cases}
\]
Thus, by the maximum principle in sets of small measure
(see Proposition~\ref{lem: small}), we infer that, for 
any~$\lambda > \lambda_0$ close to $\lambda_0$,
it results that~$w_\lambda \ge 0$ in~$\R^N$. 
As a consequence, $w_{1,\lambda}=w_\lambda\ge0$ in~$\Sigma^-_\lambda$,
which proves~\eqref{8ikjYHIJAA:A}.

As a side remark, we
notice that here we do not need $u \in L^\infty(\R^N)$, but only $u \in L^\infty_{\loc}(\R^N)$ (which follows automatically by the
definition of classical or even viscosity solution), since $\Sigma_\lambda$ is bounded.

Now we claim that 
\begin{equation}\label{claim on strong}
\text{if $\lambda > \lambda_0$ and $w_\lambda \ge 0$ in $\Sigma_\lambda$, then $w_\lambda>0$ in $\Sigma_\lambda$}.
\end{equation}
This is not a consequence of the strong maximum principle since the function $w_\lambda$ changes sign, by definition.

By contradiction, let us suppose that there exists $x_0 \in \Sigma_\lambda$ such that $u_\lambda(x_0) = u(x_0)$. Since $|x_0-y| < |x_0-y_\lambda|$ for every $x_0 \in \Sigma_\lambda$ and $y \in H_\lambda$, and~$w_\lambda$ is positive in a subset of $H_\lambda$ having positive measure, we deduce that 
\begin{align*}
0 &=c_\lambda(x_0)\,w_\lambda(x_0)\le 
(-\Delta)^s w_\lambda(x_0) = -\int_{H_\lambda} \frac{w_\lambda(y)}{|x-y|^{N+2s}}\,dy -\int_{\R^N \setminus H_\lambda} \frac{w_\lambda(y)}{|x-y|^{N+2s}}\,dy \\
& = - \int_{H_\lambda}  w_\lambda(y) \left( \frac{1}{|x-y|^{N+2s}} - \frac{1}{|x-y_\lambda|^{N+2s}} \right)\,dy < 0,
\end{align*}
where~\eqref{pb refl} was used, and so we obtain
a contradiction. 

This proves~\eqref{claim on strong}.
Then, the desired result in Step~1 follows by combining~\eqref{8ikjYHIJAA:A}
and~\eqref{claim on strong}.\medskip

\emph{Step 2) We show that $w_\lambda>0$ in $\Sigma_\lambda$ for every $\lambda > \lambda_0$.} Let 
\[
\tilde \lambda:= \sup \{\lambda >\lambda_0: \text{$w_\mu>0$ in $\Sigma_\mu$ for every $\mu \in (\lambda_0, \tilde \lambda)$}\}.
\]
By the previous step $\tilde \lambda>\lambda_0$. If $\tilde \lambda = +\infty$ the proof of
Step~2 is complete, and hence we argue by contradiction supposing that $\tilde \lambda < +\infty$. 

By continuity and by \eqref{claim on strong} we have 
\begin{equation}\label{9ijnyhgbRTAYUJUAUA78AA}
{\mbox{$w_{\tilde \lambda}>0$ in $\Sigma_{\tilde \lambda}$.}}\end{equation} Let us consider now
\[
m:= \sup_{\lambda \in (\lambda_0, \tilde \lambda +1]} \|c_\lambda\|_{L^\infty(\Sigma_\lambda)}.
\]
This value is finite since $u \in L^\infty_\loc(\R^N)$, $f$ is locally Lipschitz, and $\Sigma_{\tilde \lambda+1}$ is bounded. Therefore, 
the threshold $\delta = \delta(N,s, m)$ for the maximum principle in domains of small 
measure 
in Proposition~\ref{lem: small}
is well defined
(recall Remark \ref{rem: uniform delta}). 

Let us fix a compact set $K \Subset 
\Sigma_{\tilde \lambda}$ such that
\begin{equation}\label{ijjmikjmokFTYUIA}
|\Sigma_{\tilde \lambda} \setminus K|< \frac\delta2.\end{equation}
By 
compactness and~\eqref{9ijnyhgbRTAYUJUAUA78AA},
we have that
$$\inf_K w_{\tilde \lambda} >0.$$ 
Using this and~\eqref{ijjmikjmokFTYUIA}, we have that, by continuity,
there exists $\bar 
\eps>0$ small enough such that
\begin{equation}\label{ijjmikjmokFTYUIA:XXXX}
|\Sigma_{\tilde \lambda+\eps} \setminus K|< \delta
\qquad{\mbox{ and }}\qquad\inf_K w_{\tilde \lambda + \eps} > 0\end{equation} for every $\eps \in (0,\bar \eps)$. 

Let now
$\Sigma_{\tilde \lambda +\eps}^-:= 
\Sigma_{\tilde \lambda +\eps}
\cap
\{ w_{\tilde \lambda +\eps} <0 \}$.
We observe that 
\begin{equation}\label{9iugh4rTGyhja9uygfcUJ}
{\mbox{the measure of~$\Sigma_{\tilde \lambda +\eps}^-$
is smaller than $\delta$, 
}}\end{equation}
thanks to~\eqref{ijjmikjmokFTYUIA:XXXX}.

Now, we consider the functions~$w_{1,\tilde \lambda + \eps}$ and 
$w_{2,\tilde \lambda + \eps}$ defined as in \eqref{def w_1 e w_2} with~$\lambda:=
\tilde \lambda + \eps$. Proceeding as in Step 1, 
we can check that
\[
(-\Delta)^s w_{1,\tilde \lambda + \eps} \ge c_{\tilde \lambda+\eps}(x) w_{1,\tilde \lambda+\eps} \qquad \text{in $\Sigma_{\tilde \lambda +\eps}^-$}.
\]
We use this, \eqref{9iugh4rTGyhja9uygfcUJ}
and the maximum principle in sets of small measure 
(see Proposition~\ref{lem: small}) to conclude that~$w_{1,\tilde \lambda+\eps} \ge 0$ in $\R^N$ for every $\eps$ sufficiently small.

As a consequence, recalling~\eqref{def w_1 e w_2}, we have that~$w_{\tilde \lambda + \eps}\ge
w_{1,\tilde \lambda + \eps}\ge0$. Therefore, 
from~\eqref{claim on strong} we conclude that~$w_{\tilde \lambda + \eps} >0$ in $\Sigma_{\tilde \lambda + \eps}$ for any $\eps>0$ small enough, in contradiction with the definition of $\tilde \lambda$.
\end{proof}

\section{Overdetermined problems}\label{sec: overdet}

This section concerns the study of the overdetermined problem \eqref{overdet},
where $\Omega$ is a globally Lipschitz epigraph, satisfying the additional flatness assumption \eqref{hp epigrafico}. Regarding $f$, it satisfies ($f1$)-($f3$) in the introduction. As in the proof of Theorem \ref{thm: main 1}, it is not not restrictive to suppose that $\mu=1$.

In particular, we now proceed with the proof of Theorem \ref{thm: overdet}, which is
the fractional counterpart of the proof of Theorem~7.1 in~\cite{BCNCPAM},
where the local case was considered. 

\begin{proof}[Proof of Theorem \ref{thm: overdet}]
We claim that for any~$\tau' \in \R^{N-1}$
\begin{equation}\label{th overdet}
\Omega\subseteq\Omega - (\tau',0)=
\{ x: (x'+\tau',x_N) \in \Omega\} .
\end{equation}
To this extent,
for $\tau = (\tau',0)$ fixed and $h \ge 0$, let us consider
\[
\Sigma_{h,\tau}:= \Omega- \tau - h e_N = \{ x\in\R^N\,:\; (x'+\tau', x_N +h ) \in \Omega\}.
\]
Since $\varphi$ is Lipschitz continuous, for $h>0$ sufficiently large we have that~$\Sigma_{h,\tau}$ contains strictly $\Omega$. In other words, we can define the real number 
\begin{equation}\label{8ygcUADAFGYUIUYTFA}
h^*:= \inf\left\{ h \ge 0: \Sigma_{k,\tau} \supset \Omega \text{ for every $k >h$}\right\}
.\end{equation}
We claim that
\begin{equation}\label{98ikmA3456789098765AA}
h^* = 0.\end{equation}
To prove this,
let us suppose by contradiction that $h^*>0$. Then there exist sequences $0<h_j<h^*$ and $
x_j \in \Omega \setminus \Sigma_{h_j,\tau}$ with
$$ \lim_{j\to+\infty} h_j=h^*>0.$$
By assumption \eqref{hp epigrafico}, we infer that $\{x_j\}$ is bounded, and hence up to a subsequence $x_j \to a$, for some~$a\in\overline{\Omega \setminus \Sigma_{h^*,\tau}}\subseteq
\overline\Omega \setminus \Sigma_{h^*,\tau}$.

On the other hand, by~\eqref{8ygcUADAFGYUIUYTFA}, we know that~$ \Sigma_{h^*,\tau} \supseteq \Omega$
and therefore~$a \in \pa \Omega \cap \pa \Sigma_{h^*,\tau}$. 
In other words, the set $\Omega$ is internally tangent to $\Sigma_{h^*,\tau}$ in $a$.

Now, let us consider, for $h \ge h^*$,
\[
u_{h,\tau}(x):= u(x+\tau + he_N).
\]    
We claim that 
\begin{equation}\label{1141}
u_{h^*,\tau} \ge u \qquad \text{in $\R^N$}.
\end{equation} 
To this aim, we argue as in Subsection \ref{sec: uniqueness}. First, 
we introduce $A>0$ large enough, so that both $u$ and $u_{h^*,\tau}$ are larger than $t_1$ in $\Omega^A:= \{\dist(x,\pa \Omega) > A\}$ 
($t_1$ is defined in assumption ($f3$)).

Then, for any $h \ge h^*$, we have that~$u_{h,\tau} \ge t_1$ in $\Omega^A$, and $u_{h,\tau} \ge0= u$ in $\R^N \setminus \Omega$. 

By Lemma \ref{lem: 3.4 BCN} and Theorem \ref{thm: main 1} applied to $u$, we know that $u_{h,\tau} \ge u$ in $\Omega_A=\Omega\setminus\overline{\Omega^A}$ if $h$ is sufficiently large. Therefore we can define 
\[
\tilde h:= \inf\{h > h^*:  u_{k,\tau} \ge u \text{ in $\Omega_A$ for every $k > h$}\}.
\]
If $\tilde h= h^*$, then
claim \eqref{1141} follows
from Lemma \ref{lem: 5.1 BCN}.
On the other hand, if $\tilde h> h^*$ it is not difficult to obtain a contradiction as in Subsection \ref{sec: uniqueness},
thus completing the proof of~\eqref{1141}.

Moreover, by internal tangency, the outer normal to $\pa \Omega$ and to $\pa \Sigma_{h^*,\tau}$ coincide at the point~$a$.
Accordingly, by the $s$-Neumann condition in \eqref{overdet} reads
\begin{equation}\label{1142}
(\pa_\nu)_s u(a) - (\pa_\nu)_s u_{h^*,\tau}(a) = 0.
\end{equation}
On the other hand, the function $w_{h^*,\tau}:= u_{h^*,\tau}-u$ satisfies
\[
\begin{cases}
(-\Delta)^s w_{h^*,\tau} -c_{h^*,\tau}(x) w_{h^*,\tau} = 0 & \text{in $\Omega$}, \\
w_{h^*,\tau} \ge 0 & \text{in $\R^N$},
\end{cases}
\]
where 
\[
c_{h^*,\tau}(x):= \begin{cases} \displaystyle\frac{f(u_{h^*,\tau}(x))-f(u(x))}{
u_{h^*,\tau}(x)-u(x)} & \text{if $u_{h^*,\tau}(x)
\neq u(x)$}, \\
 0 & \text{if $u_{h^*,\tau}(x)=u(x)$}. \end{cases}
\]
Therefore, the Hopf lemma for the fractional Laplacian (see \cite[Lemma 1.2]{GreSer}) gives
\[
0> (\pa_\nu)_s w_{h^*,\tau}(a) = (\pa_\nu)_s u_{h^*,\tau}(a) - (\pa_\nu)_s u(a),
\]
in contradiction with \eqref{1142}.
This proves~\eqref{98ikmA3456789098765AA}.

Using~\eqref{98ikmA3456789098765AA}, we deduce that
\begin{equation*}
\Sigma_{0,\tau} = \Omega-\tau \supseteq \Omega,\end{equation*}
which in turn implies~\eqref{th overdet}.

Now, we deduce from~\eqref{th overdet}
that
the function $\varphi$ is necessarily a constant, i.e. $\Omega$ is a half-space
(and this concludes the proof of
Theorem \ref{thm: overdet}).

Indeed, if by contradiction $\varphi$ is not constant there exist $x_1',x_2' \in \R^{N-1}$
such that $\varphi(x_1')< \varphi(x_2')$. Let $y:= (\varphi(x_1') + \varphi(x_2'))/2$.
Notice that~$\varphi(x_1')<y<\varphi(x_2')$, and therefore 
\begin{equation}\label{ikjnmA78} 
(x_2',y) \not \in \Omega\quad{\mbox{ and }}\quad(x_1',y) \in \Omega.\end{equation}
Thus, using~\eqref{th overdet} with~$\tau':=x_2'-x_1'$, we obtain that
$$(x_1',y)=x_1  \in \Omega\subseteq \Omega -(x_2'-x_1',0).$$
By adding~$(x_2'-x_1',0)$ to this inclusion, we find that~$(x_2',y)\in\Omega$,
which is 
in contradiction with~\eqref{ikjnmA78}.
\end{proof}

\section*{Acknowledgements}

In a preliminary version of this paper (see~\cite{OLD}), the proof of Lemma~\ref{lem: frac Euler}
was unnecessarily complicated: we are indebted to Mouhamed
Moustapha Fall for the simpler argument that we incorporated
in the present version of this paper.\medskip

Part of this work was carried out while Serena Dipierro
and Enrico Valdinoci were visiting the
Justus-Liebig-Universit\"at Giessen, which
they wish to thank for the hospitality.

This work has been supported by the Alexander von Humboldt Foundation, the
ERC grant 277749 {\it E.P.S.I.L.O.N.} ``Elliptic
Pde's and Symmetry of Interfaces and Layers for Odd Nonlinearities'',
the PRIN grant 201274FYK7
``Aspetti variazionali e
perturbativi nei problemi differenziali nonlineari''
and
the ERC grant 339958 {\it Com.Pat.} ``Complex Patterns for
Strongly Interacting Dynamical Systems''.


\begin{thebibliography}{10}

\bibitem{BanBog}
R.~Ba{\~n}uelos and K.~Bogdan.
\newblock Symmetric stable processes in cones.
\newblock {\em Potential Anal.}, 21(3):263--288, 2004.

\bibitem{BaDPGMQu}
B.~Barrios, L. Del Pezzo, J. Garc\'ia-Meli\'an and A. Quaas.
\newblock Monotonicity of solutions for some nonlocal elliptic problems in half-spaces.
\newblock Preprint arXiv: 1606.01061, 2016.


\bibitem{BaScMo}
B.~Barrios, L.~Montoro, and B.~Sciunzi.
\newblock On the moving plane method for nonlocal problems in bounded domains.
\newblock Preprint arXiv: 1405.5402, 2014.

\bibitem{BCNpisa}
H.~Berestycki, L.~Caffarelli, and L.~Nirenberg.
\newblock Further qualitative properties for elliptic equations in unbounded
  domains.
\newblock {\em Ann. Scuola Norm. Sup. Pisa Cl. Sci. (4)}, 25(1-2):69--94
  (1998), 1997.
\newblock Dedicated to Ennio De Giorgi.

\bibitem{BCNCPAM}
H.~Berestycki, L.~A. Caffarelli, and L.~Nirenberg.
\newblock Monotonicity for elliptic equations in unbounded {L}ipschitz domains.
\newblock {\em Comm. Pure Appl. Math.}, 50(11):1089--1111, 1997.

\bibitem{BeNi}
H.~Berestycki and L.~Nirenberg.
\newblock On the method of moving planes and the sliding method.
\newblock {\em Bol. Soc. Brasil. Mat. (N.S.)}, 22(1):1--37, 1991.

\bibitem{Bog}
K.~Bogdan.
\newblock The boundary {H}arnack principle for the fractional {L}aplacian.
\newblock {\em Studia Math.}, 123(1):43--80, 1997.

\bibitem{CaCi}
X.~Cabr{\'e} and E.~Cinti.
\newblock Energy estimates and 1-{D} symmetry for nonlinear equations involving
  the half-{L}aplacian.
\newblock {\em Discrete Contin. Dyn. Syst.}, 28(3):1179--1206, 2010.

\bibitem{CaCi2}
X.~Cabr{\'e} and E.~Cinti.
\newblock Sharp energy estimates for nonlinear fractional diffusion equations.
\newblock {\em Calc. Var. Partial Differential Equations}, 49(1-2):233--269,
  2014.

\bibitem{CaSi}
X.~Cabr{\'e} and Y.~Sire.
\newblock Nonlinear equations for fractional {L}aplacians, {I}: {R}egularity,
  maximum principles, and {H}amiltonian estimates.
\newblock {\em Ann. Inst. H. Poincar\'e Anal. Non Lin\'eaire}, 31(1):23--53,
  2014.

\bibitem{CaSi2}
X.~Cabr{\'e} and Y.~Sire.
\newblock Nonlinear equations for fractional {L}aplacians {II}: {E}xistence,
  uniqueness, and qualitative properties of solutions.
\newblock {\em Trans. Amer. Math. Soc.}, 367(2):911--941, 2015.

\bibitem{CaffSilvCPDE}
L.~Caffarelli and L.~Silvestre.
\newblock An extension problem related to the fractional {L}aplacian.
\newblock {\em Comm. Partial Differential Equations}, 32(7-9):1245--1260, 2007.

\bibitem{CafSilCPAM}
L.~Caffarelli and L.~Silvestre.
\newblock Regularity theory for fully nonlinear integro-differential equations.
\newblock {\em Comm. Pure Appl. Math.}, 62(5):597--638, 2009.

\bibitem{CafSilAnn}
L.~Caffarelli and L.~Silvestre.
\newblock The {E}vans-{K}rylov theorem for nonlocal fully nonlinear equations.
\newblock {\em Ann. of Math. (2)}, 174(2):1163--1187, 2011.

\bibitem{ChenLiLi}
W.~Chen, C.~ Li, Y.~ Li.
\newblock A direct method of moving planes for the fractional Laplacian.
\newblock Preprint arXiv:1411.1697, 2014.

\bibitem{Cort}
C.~Cort{\'a}zar, M.~Elgueta, and J.~Garc{\'{\i}}a-Meli{\'a}n.
\newblock Nonnegative solutions of semilinear elliptic equations in
  half-spaces.
\newblock {\em J. Math. Pures Appl. (9)},
106(5):866--876, 2016.

\bibitem{Dalibard}
A.-L. Dalibard and D.~G{\'e}rard-Varet.
\newblock On shape optimization problems involving the fractional {L}aplacian.
\newblock {\em ESAIM Control Optim. Calc. Var.}, 19(4):976--1013, 2013.

\bibitem{Da}
E.~N. Dancer.
\newblock Some remarks on half space problems.
\newblock {\em Discrete Contin. Dyn. Syst.}, 25(1):83--88, 2009.

\bibitem{DDW}
J.~D{\'a}vila, L.~Dupaigne, J.~Wei.
\newblock On the fractional Lane-Emden equation.
\newblock {\em Trans. Amer. Math. Soc.}, in press. Preprint arXiv: 1404.3694, 2014.

\bibitem{DipMonPerSci}
S.~Dipierro, L.~Montoro, I.~Peral, and B.~Sciunzi.
\newblock Qualitative properties of positive solutions to nonlocal critical
  problems involving the {H}ardy-{L}eray potential.
\newblock {\em Calc. Var. Partial Differential Equations}, 55(4), 55:99,
2016. 

\bibitem{MR3427047}
S.~Dipierro, O.~Savin, and E.~Valdinoci.
\newblock A nonlocal free boundary problem.
\newblock {\em SIAM J. Math. Anal.}, 47(6):4559--4605, 2015.

\bibitem{OLD}
S.~Dipierro, N.~Soave, and E.~Valdinoci.
\newblock 
On fractional elliptic equations in Lipschitz sets and
epigraphs: Regularity, monotonicity and rigidity results.
\newblock Preprint
http://www.wias-berlin.de/preprint/2256/wias\_preprints\_2256.pdf, 2016.

\bibitem{EstLio}
M.~J. Esteban and P.-L. Lions.
\newblock Existence and nonexistence results for semilinear elliptic problems
  in unbounded domains.
\newblock {\em Proc. Roy. Soc. Edinburgh Sect. A}, 93(1-2):1--14, 1982/83.

\bibitem{FaJa}
M.~M. Fall and S.~Jarohs.
\newblock Overdetermined problems with fractional {L}aplacian.
\newblock {\em ESAIM Control Optim. Calc. Var.}, 21(4):924--938, 2015.

\bibitem{FaWe}
M.~M. Fall and T.~Weth.
\newblock Monotonicity and nonexistence results for some fractional elliptic
  problems in the half-space.
\newblock {\em Commun. Contemp. Math.}, 18(1):1550012, 25, 2016.


\bibitem{FaSc}
A.~Farina and B.~Sciunzi.
\newblock Qualitative properties and classification of nonnegative solutions to $-\Delta u = f(u)$ in unbounded domains when $f(0)<0$.
\newblock Preprint arXiv:1405.3428, 2014.

\bibitem{FaSo}
A.~Farina and N.~Soave.
\newblock Symmetry and uniqueness of nonnegative solutions of some problems in
  the halfspace.
\newblock {\em J. Math. Anal. Appl.}, 403(1):215--233, 2013.

\bibitem{Faz}
M.~Fazly and J.~Wei
\newblock On stable solutions of the fractional Henon-Lane-Emden equation.
\newblock {\em Commun. Contemp. Math.}, 
18(5):1650005, 24, 2016.

\bibitem{FeWa}
P.~Felmer and Y.~Wang.
\newblock Radial symmetry of positive solutions to equations involving the
  fractional {L}aplacian.
\newblock {\em Commun. Contemp. Math.}, 16(1):1350023, 24, 2014.

\bibitem{GreSer}
A.~Greco and R.~Servadei.
\newblock Hopf's lemma and constrained radial symmetry for the fractional
  {L}aplacian.
\newblock Preprint: https://www.ma.utexas.edu/mp\_arc/c/14/14-69.pdf, 2014.

\bibitem{GrubbAnal}
G.~Grubb.
\newblock Local and nonlocal boundary conditions for {$\mu$}-transmission and
  fractional elliptic pseudodifferential operators.
\newblock {\em Anal. PDE}, 7(7):1649--1682, 2014.

\bibitem{GrubbAdv}
G.~Grubb.
\newblock Fractional {L}aplacians on domains, a development of {H}\"ormander's
  theory of {$\mu$}-transmission pseudodifferential operators.
\newblock {\em Adv. Math.}, 268:478--528, 2015.

\bibitem{JarNod}
S.~Jarohs. 
\newblock Symmetry of solutions to nonlocal nonlinear boundary value problems in radial sets.
\newblock {\em NoDEA Nonlinear Differential Equations Appl.},
23(3):22, Art. 32, 2016.

\bibitem{LiLi}
D.~Li , Z.~Li.
\newblock Some overdetermined problems for the fractional Laplacian equation on the exterior domain and the annular domain.
\newblock {\em Nonlinear Anal.: TMA}, 139:196--210, 2016.

\bibitem{QuaXia}
A.~Quaas and A.~Xia.
\newblock Liouville type theorems for nonlinear elliptic equations and systems
  involving fractional {L}aplacian in the half space.
\newblock {\em Calc. Var. Partial Differential Equations}, 52(3-4):641--659,
  2015.

\bibitem{RosSerpre}
X.~Ros-Oton and J.~Serra.
\newblock Boundary regularity estimates for nonlocal elliptic equations in $C^1$ and $C^{1,\alpha}$ domains.
\newblock Preprint arXiv:1512.07171, 2015.



\bibitem{RosSerDuke}
X.~Ros-Oton and J.~Serra.
\newblock Boundary regularity for fully nonlinear integro-differential
  equations.
\newblock {\em Duke Math. J.},
165(11):2079-–2154, 2016.

\bibitem{RosSerJMPA}
X.~Ros-Oton and J.~Serra.
\newblock The {D}irichlet problem for the fractional {L}aplacian: regularity up
  to the boundary.
\newblock {\em J. Math. Pures Appl. (9)}, 101(3):275--302, 2014.

\bibitem{SavValGamma}
O.~Savin and E.~Valdinoci.
\newblock {$\Gamma$}-convergence for nonlocal phase transitions.
\newblock {\em Ann. Inst. H. Poincar\'e Anal. Non Lin\'eaire}, 29(4):479--500,
  2012.

\bibitem{SavValMono}
O.~Savin and E.~Valdinoci.
\newblock Some monotonicity results for minimizers in the calculus of
  variations.
\newblock {\em J. Funct. Anal.}, 264(10):2469--2496, 2013.

\bibitem{MR2244602}
L.~Silvestre.
\newblock H\"older estimates for solutions of integro-differential equations
  like the fractional {L}aplace.
\newblock {\em Indiana Univ. Math. J.}, 55(3):1155--1174, 2006.

\bibitem{SilCPAM}
L.~Silvestre.
\newblock Regularity of the obstacle problem for a fractional power of the
  {L}aplace operator.
\newblock {\em Comm. Pure Appl. Math.}, 60(1):67--112, 2007.

\bibitem{SiVa}
Y.~Sire and E.~Valdinoci.
\newblock Fractional {L}aplacian phase transitions and boundary reactions: a
  geometric inequality and a symmetry result.
\newblock {\em J. Funct. Anal.}, 256(6):1842--1864, 2009.

\bibitem{SoVa}
N.~Soave and E.~Valdinoci.
\newblock Overdetermined problems for the fractional {L}aplacian in exterior or
  annular sets.
\newblock {\em J. Anal. Math.}, in press. Preprint arXiv: 1412.5074, 2014.

\end{thebibliography}

\end{document}